\numberwithin{equation}{section}
\newcommand{\pp}{\mathcal{P}}
\newcommand{\Cb}{\mathcal{B}}
\newcommand{\Ivol}{\mathrm{Ivol}}
\newcommand{\Cz}{\mathcal{Z}}
\newcommand{\M}{\bm{M}}
\newcommand{\Cd}{\mathcal{D}}
\newcommand{\Cf}{\mathcal{F}}
\newcommand{\Oo}{\mathcal{O}}
\newcommand{\Ii}{\mathcal{I}}
\newcommand{\Cw}{\mathcal{W}}
\newcommand{\Cx}{\mathcal{X}}
\newcommand{\Cs}{\mathcal{S}}
\newcommand{\Cy}{\mathcal{Y}}
\newcommand{\Ch}{\mathcal{H}}
\newcommand{\Cr}{\mathcal{R}}
\newcommand{\Cu}{\mathcal{U}}
\newcommand{\Supp}{\mathrm{Supp}}
\newcommand{\Cm}{\mathcal{M}}
\theoremstyle{plain} % 'this is the initial setting and can be omitted here'
\newtheorem{thm}{Theorem}[section] % number like 3.1, 3.2, 3.3, etc.
\newtheorem{lemma}[thm]{Lemma}
\newtheorem{prop}[thm]{Proposition}
\theoremstyle{definition} % 'here we change the style'
\newtheorem{defn}[thm]{Definition} % numbered with thm
\newtheorem{conj}[thm]{Conjecture}
\theoremstyle{remark} % 'style changed again'
\newtheorem{rem}[thm]{Remark}
\begin{document}
	
	\title{On the boundedness of canonical models}
	\author{Junpeng Jiao}
	\email{jiao$\_$jp@tsinghua.edu.cn}
	\address{Yau Mathematical Sciences Center, Tsinghua University, Beijing, China}
	\keywords{canonical models, Iitaka fibration, polarised Calabi-Yau pairs, boundedness}
	
	\thanks{The author was partially supported by NSF research grant no: DMS-1952522 and by a grant from the Simons Foundation; Award Number: 256202.}

	\begin{abstract}
		It is conjectured that the canonical models of varieties (not of general type) are bounded when the Iitaka volume is fixed. We confirm this conjecture when the general fiber of the corresponding Iitaka fibration is in a fixed bounded family of polarized Calabi-Yau pairs. 
	\end{abstract}
	\maketitle
	
	\section{Introduction}
	Throughout this paper, we work over the complex numbers field $\mathbb{C}$.
	
	By analogy with the definition of volumes of divisors, the Iitaka volume of a $\mathbb{Q}$-divisor is defined as follows:
	Let $X$ be a normal projective variety and $D$ be a $\mathbb{Q}$-Cartier divisor. When the Iitaka dimension $\kappa(D)$ of $D$ is non-negative, the Iitaka volume of $D$ is defined to be
	\begin{equation*}
		\Ivol(D):=\limsup_{m\rightarrow \infty}\frac{\kappa(D)! h^0(X,\Oo_X(\lfloor mD\rfloor))}{m^{\kappa(D)}}
	\end{equation*}
	\begin{thm}\label{fibers in a good moduli space}
		Fix $\mathscr{C}$ a log bounded class of polarized log Calabi-Yau pairs, $\Ii\subset [0,1]\cap \mathbb{Q}$ a DCC set of rational numbers, $n\in \mathbb{Z}_{>0}$ and $v\in \mathbb{Q}_{>0}$. Suppose $(X,\Delta)$ is a klt pair of dimension $n$, $L$ is a divisor on $X$, and $f: X\rightarrow Z$ is an algebraic contraction which is birationally equivalent to the Iitaka fibration of $K_X+\Delta$.
		
		If the general fiber $(X_g,\Delta_g,L_g)$ of $f$ is in $\mathscr{C}$ and $\mathrm{coeff}\Delta\subset \Ii$, then 
		\begin{enumerate}
			\item $\Ivol(X,K_X+\Delta)$ is in a DCC set, and
			
			\item If $\Ivol(X,K_X+\Delta)=v$ is a constant, then $$\mathrm{Proj}\oplus _{m=0}^\infty H^0(X,\mathcal{O}_X(mK_X+\lfloor m\Delta\rfloor))$$
			is in a bounded family.
		\end{enumerate}
	\end{thm}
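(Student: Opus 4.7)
My approach is to descend the problem to the base $Z$ of the Iitaka fibration via the canonical bundle formula, and then invoke the boundedness theory for generalized log general type pairs. After replacing $X$ and $Z$ by suitable birational models, I may assume $f\colon X\to Z$ is a surjective morphism with connected fibers between smooth projective varieties. The canonical bundle formula then writes
\[
K_X+\Delta \sim_{\Qq} f^{*}\bigl(K_Z + B_Z + M_Z\bigr),
\]
with $B_Z$ the discriminant part and $M_Z$ the moduli part. Because $f$ is birational to the Iitaka fibration, $\dim Z=\kappa(K_X+\Delta)$, the divisor $K_Z+B_Z+M_Z$ is big on $Z$, and $\Ivol(K_X+\Delta)=\vol(K_Z+B_Z+M_Z)$.

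The next step is to verify that $(Z,B_Z+M_Z)$ is a generalized klt pair of log general type with well-controlled data. For the discriminant, the DCC hypothesis $\mathrm{coeff}\,\Delta\subset\Ii$ together with fibers in $\mathscr{C}$ lets me apply the DCC canonical bundle formula machinery (Ambro, Kollár, Birkar, Filipazzi) to conclude that $\mathrm{coeff}\,B_Z$ lies in a fixed DCC set $\Ii'$ depending only on $\Ii$, $n$, and $\mathscr{C}$. For the moduli part, the polarization $L$ is crucial: since $(X_g,\Delta_g,L_g)\in\mathscr{C}$ for a log bounded class, the variation of polarized log Calabi--Yau structure is rigidified, and I expect a fixed integer $N=N(\mathscr{C})$ with $NM_Z$ Cartier and nef on a suitable birational model of $Z$, obtained via a base change of degree bounded purely in $\mathscr{C}$. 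Thus $(Z,B_Z,M_Z)$ becomes a generalized klt pair with DCC boundary coefficients and nef part of bounded Cartier index, with $K_Z+B_Z+M_Z$ big.

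Applying Birkar's boundedness theorem for generalized log general type pairs, together with its DCC-volume corollary, then delivers both conclusions: (1) $\vol(K_Z+B_Z+M_Z)=\Ivol(K_X+\Delta)$ lies in a DCC set, and (2) when this volume equals $v$, the pair $(Z,B_Z+M_Z)$ with polarization $K_Z+B_Z+M_Z$ lies in a log bounded family. The canonical model is then recovered from the base via the isomorphism
\[
\mathrm{Proj}\bigoplus_{m\geq 0} H^0\bigl(X,\Oo_X(mK_X+\lfloor m\Delta\rfloor)\bigr) \cong \mathrm{Proj}\bigoplus_{m\geq 0} H^0\bigl(Z,\Oo_Z(\lfloor m(K_Z+B_Z+M_Z)\rfloor)\bigr)
\]
for sufficiently divisible $m$, so boundedness of the polarized $(Z,B_Z+M_Z)$ implies boundedness of the canonical model. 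The principal obstacle will be the uniform control of $M_Z$---bounding its Cartier index and the necessary base change degree purely in terms of $\mathscr{C}$. This is exactly what the polarization hypothesis is designed to furnish (unpolarized b-semi-ampleness of $M_Z$ is only conjectural), and it will require a careful analysis of how polarized bounded families of log Calabi--Yau pairs vary over $Z$.
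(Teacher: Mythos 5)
Your reduction to the base via the canonical bundle formula, and the claim that $\mathrm{coeff}(B_Z)$ lies in a DCC set, match the paper's Theorem \ref{canonical bundle formula on fibration}. The genuine gap is at the core of your plan: the existence of a fixed integer $N=N(\mathscr{C})$ such that $N\M_Z$ is Cartier (and nef after descending to a suitable model) is stated only as an expectation (``I expect a fixed integer $N$\dots, obtained via a base change of degree bounded purely in $\mathscr{C}$''), with no argument. This is precisely the hard content of the theorem: bounded Cartier index of the moduli $\bm{b}$-divisor for lc-trivial fibrations whose fibers lie in a log bounded polarized family does not follow formally from log boundedness of $\mathscr{C}$; it is a statement in the direction of effective b-semi-ampleness. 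Establishing a usable version of it is what Sections 5 and 6 of the paper are for: the fibers are parametrized by a Hilbert-scheme family $(\Cx_\Ch,\mathcal{D}_\Ch)\rightarrow\Ch$ (Remark \ref{moduli map}), Ambro's theorem (Theorem \ref{moduli part is nef and good}) factors the moduli part through a generically finite cover and a period-type map to $\Ch^*$, Kov\'acs--Lieblich weak boundedness (Lemma \ref{decompose to get weakly bounded}, Theorem \ref{weakly bounded morphisms are bounded}) controls the maps from the bases, and Theorems \ref{bound singular locus} and \ref{getting bounded Ambro models} produce a bounded index $l$ together with descent of $\M$ on a bounded family of log smooth bases, where it is one of finitely many divisors (Lemma \ref{boundedness of moduli part}, Theorem \ref{boundedness of Ambro model}). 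Your proposal supplies none of this, and the base-change remark also hides the issue of transferring the index from the cover back to $Z$ (Proposition \ref{moduli part stable under base change} is what the paper uses for such comparisons).

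Even granting the index bound, the rest of your plan consists of quoting Birkar's DCC-of-volumes and boundedness theorems for generalized pairs \cite{Bir21b}. That is a coherent alternative route --- indeed the introduction notes that \cite{Bir21b} proves Conjecture \ref{conjecture on boundedness} when the general fiber is bounded --- but it is not an argument you have provided, and it is deliberately not the paper's: once the moduli part descends, with bounded index and finitely many possible classes, on a bounded family of bases, the paper deduces the DCC of Iitaka volumes from invariance of plurigenera and Filipazzi's fixed-base DCC theorem (Theorems \ref{Invariance of Plurigenra} and \ref{DCC for fixed base}, via Theorem \ref{boundedness of Ambro model means DCC}), and deduces boundedness of the canonical models from the HMX-style argument of Section 4 (Theorem \ref{achieve volume}, Lemma \ref{log canonical model}), rather than from generalized-pair boundedness. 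A further small imprecision: $K_X+\Delta$ is not $f$-trivial in general, so one must first split off the vertical part $F$ with $f_*\mathcal{O}_X(mF)\cong\mathcal{O}_Z$ as in Theorem \ref{canonical bundle formula on fibration}(3) and Theorem \ref{general fiber calabi-yau will induce lc-trivial fibration structure}, and only then write $K_X+\Delta-F\sim_{\mathbb{Q}}f^*(K_Z+B_Z+\M_Z)$; this does not affect the Proj or the volume, but it is needed to even have an lc-trivial fibration.
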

	
	Theorem (\ref{fibers in a good moduli space}) is a special case of the following conjecture.
	
	\begin{conj}\label{conjecture on boundedness}
		Let $n $ be a positive integer, $v$ a non-negative rational number, and $\Ii\subset [0,1]\cap \mathbb{Q}$ a DCC set of rational numbers. Let $\Cd(n,v,\Ii)$ be the set of varieties $Z$ such that:
		\begin{itemize}
			\item $(X,\Delta)$ is a klt pair of dimension $n$.
			\item $\mathrm{coeff} \Delta \subset \Ii$.
			\item $\Ivol(K_X+\Delta)=v$ is a constant.
			\item $f: X\dashrightarrow Z$ is the Iitaka vibration associated with $K_X+\Delta$, where
			\begin{equation*}
				Z=\mathrm{Proj}\oplus _{m=0}^\infty H^0(X,\mathcal{O}_X(m(K_X+\Delta))).
			\end{equation*}
		\end{itemize}
		Then $\Cd(n,v,\Ii)$ is in a bounded family.
	\end{conj}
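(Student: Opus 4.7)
The plan is to deduce Conjecture~\ref{conjecture on boundedness} from Theorem~\ref{fibers in a good moduli space}: I want to show that, under the stated hypotheses alone, the general fibres of the Iitaka fibration automatically lie in a log bounded class $\mathscr{C}$ of polarized log Calabi--Yau pairs depending only on $(n,v,\Ii)$, and then invoke part (2) of the theorem. First, pass to a $\bQ$-factorial dlt modification and a relative good minimal model so that the Iitaka fibration is represented by a morphism $f:X\to Z$ with $K_X+\Delta$ semiample over $Z$. Kawamata's canonical bundle formula then yields $K_X+\Delta\sim_{\bQ} f^{*}(K_Z+B_Z+M_Z)$, where $(Z,B_Z+M_Z)$ is a generalized klt pair of log general type with generalized volume $v$, and the coefficients of $B_Z$ form a DCC set by Ambro--Kollár.

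\textbf{Polarizing the general fibre.} The general fibre $(X_g,\Delta_g)$ is klt with $K_{X_g}+\Delta_g\sim_{\bQ}0$, $\dim X_g=n-\dim Z$, and $\mathrm{coeff}(\Delta_g)\subset \Ii$. Effective index results for klt log Calabi--Yau pairs of bounded dimension with DCC coefficients (in the line of Hacon--McKernan--Xu, Birkar, Filipazzi, and others) should supply an integer $N=N(n,\Ii)$ with $N(K_{X_g}+\Delta_g)\sim 0$. Granted any boundedness theorem for klt log Calabi--Yau pairs of fixed dimension, bounded index, and DCC coefficients, the collection $\{(X_g,\Delta_g)\}$ is log bounded; a relatively very ample divisor on a bounded compactification then furnishes a uniform polarization $L_g$, and the resulting polarized family is the required $\mathscr{C}$.

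\textbf{Main obstacle.} The decisive step is precisely the boundedness of klt log Calabi--Yau pairs of fixed dimension with DCC coefficients --- the ``Calabi--Yau analogue of BAB''. In full generality this remains open, and via Theorem~\ref{fibers in a good moduli space} the present conjecture is essentially equivalent to it. Nonetheless it is known in several regimes (low fibre dimension, elliptic or Abelian fibration structure, K-stable fibres, Fano-type fibres), each of which would yield an unconditional special case of Conjecture~\ref{conjecture on boundedness}. I expect that the rigidity provided by bigness of $K_Z+B_Z+M_Z$ together with the fixed Iitaka volume $v$ can be exploited to constrain the variation of the fibres further --- perhaps via the discriminant and moduli parts $B_Z$, $M_Z$ and their DCC behaviour --- but extracting log boundedness of the fibres in full generality is the honest difficulty.

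\textbf{Conclusion.} Once $\mathscr{C}=\mathscr{C}(n,v,\Ii)$ has been exhibited, Theorem~\ref{fibers in a good moduli space}(2) applies verbatim and yields that $Z=\mathrm{Proj}\bigoplus_{m=0}^{\infty}H^{0}(X,mK_X+\lfloor m\Delta\rfloor)$ lies in a bounded family, completing the proof of Conjecture~\ref{conjecture on boundedness}.
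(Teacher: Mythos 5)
The statement you set out to prove is Conjecture~\ref{conjecture on boundedness}, which the paper does not prove: it is stated as open, and Theorem~\ref{fibers in a good moduli space} is only the special case in which the general fibre of the Iitaka fibration is assumed to lie in a fixed bounded family of \emph{polarized} Calabi--Yau pairs. Your proposal correctly sees that a reduction to that theorem requires manufacturing such a class $\mathscr{C}(n,v,\Ii)$ from the hypotheses of the conjecture alone, and you are candid that you cannot do this; so what you have written is a conditional reduction, not a proof, and there is no proof in the paper to compare it with.

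Beyond the gap you acknowledge, two steps of the reduction itself are problematic. First, representing the Iitaka fibration by a morphism with $K_X+\Delta$ semiample over $Z$ via Theorem~\ref{HX13 Theorem 2.12} requires the (very) general fibre --- a klt pair of Kodaira dimension zero --- to have a good minimal model, which is conjectural in dimension $\geq 4$; this is exactly why it appears as an explicit hypothesis in Definition~\ref{definition of Dd}. Second, and decisively, the ingredient you hope to ``grant'' --- boundedness of klt log Calabi--Yau pairs of fixed dimension, bounded index and DCC coefficients --- is false as stated, not merely open: abelian surfaces (index one, empty boundary) do not form a bounded family, since a very general abelian surface carrying a polarization of type $(1,d)$ has Picard rank one and its minimal ample class has self-intersection $2d$, which is unbounded, whereas membership in a bounded family forces a uniform bound on the volume of some very ample divisor. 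This is precisely why Theorem~\ref{fibers in a good moduli space} takes as input the triple $(X_g,\Delta_g,L_g)$ with a polarization $L_g$ of bounded degree induced by a divisor $L$ on $X$; that datum cannot be extracted from the hypotheses of Conjecture~\ref{conjecture on boundedness}, so the conjecture is genuinely stronger than the theorem, and any viable strategy must either produce a uniformly bounded polarization on the fibres (which fixed $n$, $v$ and $\Ii$ do not provide) or replace fibrewise boundedness by a weaker input, as in the approach of Birkar cited in the introduction. The effective index statement $N(K_{X_g}+\Delta_g)\sim 0$ with $N=N(n,\Ii)$ that you invoke is likewise open in general, though that is the lesser of the difficulties.
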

	Note that by \cite{BCHM10}, the canonical ring $R(X,K_X+\Delta):=\oplus _{m=0}^\infty H^0(X,\mathcal{O}_X(m(K_X+\Delta)))$ is finitely generated, this means $Z=\mathrm{Proj}\oplus _{m=0}^\infty H^0(X,\mathcal{O}_X(m(K_X+\Delta)))$ is well defined and $v=\Ivol(X,K_X+\Delta)$ is a positive rational number. When $K_X+\Delta$ is big, Conjecture \ref{conjecture on boundedness} is proved by \cite{HMX14}, and when the general fiber of $f$ is $\epsilon$-lc Fano type, it is proved by \cite{Li20}.
	Almost at the same time, \cite{Bir21b} proved Conjecture \ref{conjecture on boundedness} when the general fiber of $f$ is in a bounded family.
	
	It is shown in \cite{HMX13} that the boundedness of varieties is connected with the DCC of volumes. We think that the following conjecture is closely related to Conjecture \ref{conjecture on boundedness}.
	\begin{conj}\label{conjecture on volume}
		Let $n\in\mathbb{N}$ and a DCC set $\Ii\subset[0,1]\cap \mathbb{Q}$. Then the set of Iitaka volumes
		$$\{\Ivol(K_X+\Delta)\ |\ \text{$(X,\Delta)$ is klt, $\mathrm{dim}X=n$ and $\mathrm{coeff}(\Delta)\subset \Ii$.} \}$$
		is a DCC set.
	\end{conj}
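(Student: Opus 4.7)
The plan is to reduce Conjecture \ref{conjecture on volume} to Theorem \ref{fibers in a good moduli space}(1). Given any $(X, \Delta)$ as in the conjecture, take the Iitaka fibration $f \colon X \dashrightarrow Z$ associated to $K_X + \Delta$ and, after replacing $X$ by a suitable higher birational model, assume $f$ is a morphism. The general fiber $(X_g, \Delta_g)$ is a klt log Calabi-Yau pair of dimension at most $n$ with $\mathrm{coeff}(\Delta_g) \subset \Ii$.

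Next, I would attempt to show that the general fibers $(X_g, \Delta_g)$, equipped with a natural polarization $L_g$, belong to a log bounded class $\mathscr{C}$ of polarized log Calabi-Yau pairs. Candidate polarizations include an ample divisor produced by an effective base-point-free theorem applied to a suitable twist of $K_{X_g}+\Delta_g$ by an auxiliary ample class, or a polarization pulled back from a moduli space of klt log Calabi-Yau pairs. Once log boundedness of $(X_g,\Delta_g,L_g)$ is established, Theorem \ref{fibers in a good moduli space}(1) applies verbatim and yields that $\Ivol(K_X + \Delta)$ lies in a DCC set.

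The hard part is precisely this log boundedness of the family of general fibers. It amounts to a version of Birkar's conjecture on the boundedness of klt log Calabi-Yau pairs of fixed dimension with DCC coefficients, which is known only in restricted settings (low dimension, Fano-type general fibers, and certain fibrations). A complementary strategy is to induct on $\dim X$ via the canonical bundle formula
\[ K_X + \Delta \sim_{\mathbb{Q}} f^\ast(K_Z + B_Z + M_Z), \]
where $(Z, B_Z + M_Z)$ is a generalized klt pair whose boundary coefficients lie in a DCC set by HMX-type arguments. However, translating $\Ivol(K_X + \Delta)$ into $\vol(K_Z + B_Z + M_Z)$ introduces a multiplicative correction coming from the fibers, and controlling the moduli b-divisor $M_Z$ in a DCC fashion remains the central technical obstacle in this direction as well. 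I expect that any unconditional proof of the conjecture must ultimately address one of these two hurdles.
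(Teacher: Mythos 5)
The statement you are addressing is Conjecture \ref{conjecture on volume}; the paper does not prove it, and neither does your proposal. What you have written is a reduction, not a proof: the step you yourself flag as ``the hard part''---that the general fibers $(X_g,\Delta_g)$, together with some polarization $L_g$, all lie in a \emph{single} log bounded class $\mathscr{C}$ of polarized Calabi--Yau pairs depending only on $n$ and $\Ii$---is precisely the hypothesis that Theorem \ref{fibers in a good moduli space} takes as input, and it is a wide-open problem (a form of the boundedness conjecture for klt log Calabi--Yau pairs; even for smooth Calabi--Yau threefolds with $\Delta=0$ boundedness is unknown). Two further points make the reduction weaker than it may look. First, there is no ``natural polarization'' to appeal to: since $K_{X_g}+\Delta_g\sim_{\QQ}0$, an effective base-point-free or non-vanishing statement applied to a twist by an auxiliary ample class gives no uniform control on the degree of $L_g$, and a polarization pulled back from a moduli space presupposes the very moduli/boundedness statement you are trying to use. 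Second, even granting that each individual fiber lies in \emph{some} bounded class, the DCC set produced by Theorem \ref{fibers in a good moduli space}(1) depends on the fixed class $\mathscr{C}$; a union of DCC sets over infinitely many classes need not satisfy the DCC, so you genuinely need one class for all $(X,\Delta)$ in the conjecture.

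Your alternative route via the canonical bundle formula runs into the same wall in a different guise: writing $K_X+\Delta\sim_{\QQ}f^*(K_Z+B_Z+\M_Z)$ (Theorem \ref{canonical bundle formula on fibration}) and noting that $\Ivol(K_X+\Delta)=\vol(K_{Z'}+B_{Z'}+\M_{Z'})$ with $\mathrm{coeff}(B_{Z'})$ in a DCC set is exactly what the paper does, but the paper can only conclude DCC for the restricted class $\Cd(n,\Ii,l,r)$, where one fixes in advance an integer $l$ with $l\M_{Z'}$ nef Cartier and a divisor $D$ controlling the singular locus of the fibration together with the bound $\Ivol(K_X+\Delta+f^*D)\le r\,\Ivol(K_X+\Delta)$ (and Theorem \ref{fibers in a good moduli space} manufactures these data from the bounded class $\mathscr{C}$ via the weak boundedness machinery of Sections 5--6). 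Making $l$, $D$, and $r$ uniform in $n$ and $\Ii$ alone is equivalent to controlling the moduli $\bm{b}$-divisor uniformly, which is again open. So the proposal correctly identifies the obstacles but does not overcome either of them; as it stands it establishes nothing beyond the special cases already proved in the paper.
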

	The main idea is to prove the DCC of Iitaka volumes and the boundedness of the canonical models when the locus of singular fibers of the Iitaka fibrations is "bounded". We show that in this case, we can choose a uniform base such that the moduli part (see Theorem \ref{canonical bundle formula}) descends.
	
	To be precise, we are interested in the following set of log pairs and the corresponding Iitaka fibrations.
	\begin{defn}\label{definition of Dd}
		Fix a DCC set $\Ii\subset [0,1]\cap \mathbb{Q}$, positive integers $n,r,l$, let $\Cd(n,\Ii,l,r)$ be the set of log pairs $(X,\Delta)$ satisfying the following conditions:
		\begin{itemize}
			\item $(X,\Delta)$ is a klt pair of dimension $n$. 
			\item $\mathrm{coeff} \Delta\subset \Ii$.	
			\item $f: X\rightarrow Z$ is the canonical model of $(X,\Delta)$.		
			\item the general fibre $(X_g,\Delta_g)$ of $f$ has a good minimal model.
			\item Let $(Z',B_{Z'}+\M_{Z'})$ be the generalized pair defined in Theorem \ref{canonical bundle formula on fibration}, then $l\M_{Z'}$ is nef and Cartier.
			\item there is a reduced divisor $D$ and a $\mathbb{Q}$-divisor $F\in |K_X+\Delta|_{\mathbb{Q}/Z}$ such that $(X,\Supp(\Delta-F))$ is log smooth over $Z\setminus D$ and $\Ivol(K_X+\Delta+f^*D)\leq r\Ivol(K_X+\Delta)$.	
		\end{itemize}

	\end{defn}
	\begin{thm}
		\label{key theorem}
		Fix a DCC set $\Ii \subset [0,1]\cap \mathbb{Q}$, positive integers $n,r,l$, then the set
		$$\{\Ivol(K_X+\Delta)\ |\ (X,\Delta)\in \Cd(n,\Ii,l,r) \}$$ 
		satisfies the DCC.
	\end{thm}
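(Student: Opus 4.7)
The plan is to transfer the DCC question from $X$ to the base of the Iitaka fibration via the canonical bundle formula, and then invoke DCC of volumes for generalized polarized pairs. Let $(X,\Delta)\in \Cd(n,\Ii,l,r)$ with canonical model $f\colon X\to Z$. By Theorem \ref{canonical bundle formula on fibration} I obtain a birational model $Z'\to Z$ and a generalized pair $(Z',B_{Z'}+\M_{Z'})$ with $l\M_{Z'}$ nef and Cartier, such that $\Ivol(K_X+\Delta) = \vol(K_{Z'}+B_{Z'}+\M_{Z'})$; here $\kappa := \dim Z' \le n-1$ is the Iitaka dimension of $K_X+\Delta$, which I fix throughout.

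The hypothesis that $(X,\Supp(\Delta-F))$ is log smooth over $Z\setminus D$ ensures the canonical bundle formula admits a uniform fiber-wise log resolution away from $D$; combined with $\mathrm{coeff}(\Delta)\subset\Ii$ and Ambro--Kawamata adjunction, the coefficient of $B_{Z'}$ along any prime divisor whose image in $Z$ is not contained in $D$ lies in a DCC set $\Ii'=\Ii'(\Ii,n)$ depending only on $\Ii$ and $n$. The coefficients along divisors over $D$ are \emph{a priori} only in $[0,1]$, so I saturate them by replacing $B_{Z'}$ with
\[
\Cb_{Z'}\ :=\ B_{Z'}-B_{Z'}\big|_{D_{Z'}}+D_{Z'},
\]
where $D_{Z'}$ is the birational transform of $D$. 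The generalized pair $(Z',\Cb_{Z'}+\M_{Z'})$ is then lc with boundary coefficients in the DCC set $\Ii'\cup\{1\}$ and nef part of bounded index $l$. Since $B_{Z'}\le \Cb_{Z'}\le B_{Z'}+D_{Z'}$, the volume hypothesis $\Ivol(K_X+\Delta+f^*D)\le r\Ivol(K_X+\Delta)$ gives the sandwich
\[
v\ :=\ \vol(K_{Z'}+B_{Z'}+\M_{Z'})\ \le\ \vol(K_{Z'}+\Cb_{Z'}+\M_{Z'})\ =:\ w\ \le\ r\,v.
\]

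I now apply the generalized-pair extension of the Hacon--McKernan--Xu DCC-of-volumes theorem (in the spirit of \cite{HMX14}, extended to allow a nef part of bounded index): as $(Z',\Cb_{Z'}+\M_{Z'})$ varies over generalized lc pairs of fixed dimension $\kappa$ with DCC boundary and $l\M$ nef Cartier, the set $\{w\}$ is DCC. The main obstacle is propagating DCC from $w$ down to $v$: the sandwich $v\le w\le rv$ does not do this on its own. I would finish by contradiction, assuming a strictly descending sequence $v_i=\Ivol(K_{X_i}+\Delta_i)$; after extracting a subsequence on which $w_i$ stabilizes (by DCC of $\{w_i\}$) while $w_i/v_i\le r$, I would run a Hacon--McKernan--Xu style limiting / birational-boundedness argument on the associated generalized pairs $(Z'_i,\Cb_{Z'_i}+\M_{Z'_i})$ to force the $v_i$ themselves to stabilize, contradicting strict descent.
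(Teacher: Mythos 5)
Your opening step coincides with the paper's (pass to the generalized pair $(Z',B_{Z'}+\M_{Z'})$ via Theorem \ref{canonical bundle formula on fibration} and study $\vol(K_{Z'}+B_{Z'}+\M_{Z'})$), but after that there are two genuine gaps. The black box you invoke --- DCC of volumes for generalized lc pairs of fixed dimension with DCC boundary coefficients and a \emph{varying} nef part of bounded Cartier index --- is not available in this paper's toolkit and is a deep theorem in its own right (it is essentially the contemporaneous result of Birkar's ``Boundedness and volume of generalised pairs'', which the introduction notes appeared almost at the same time). Nothing ``in the spirit of HMX14'' yields it: \cite{HMX14} treats honest pairs, \cite{BZ16} gives only effective birationality and a pseudo-effectivity threshold, and Filipazzi's DCC theorem (Theorem \ref{DCC for fixed base}) requires a \emph{fixed} base $W$ and a \emph{fixed} divisor $M$ pulled back from it. Note also that if that black box were allowed, your saturation $\Cb_{Z'}$ and the sandwich $v\le w\le rv$ would be pointless: Theorem \ref{canonical bundle formula on fibration}(4) already puts \emph{all} coefficients of $B_{Z'}$ (including those over $D$) in a DCC set, so $\Ivol(K_X+\Delta)=\vol(K_{Z'}+B_{Z'}+\M_{Z'})$ would be DCC immediately, and the hypotheses on $D$ and $r$ in Definition \ref{definition of Dd} would never be used --- a sign that what you are citing is far stronger than what is actually at your disposal.

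The second gap is the final step, which you yourself flag as missing: DCC of $w_i=\vol(K_{Z'_i}+\Cb_{Z'_i}+\M_{Z'_i})$ together with $v_i\le w_i\le r v_i$ puts no constraint on a strictly decreasing sequence $v_i$ --- after passing to a subsequence on which $w_i$ is constant, the $v_i$ can still decrease strictly inside $[w/r,\,w]$, and the promised ``HMX-style limiting argument'' is exactly the hard content, not a routine fix. The paper fills precisely this hole, using the hypotheses your argument leaves idle: Theorem \ref{boundedness of Ambro model} uses the divisor $D$, the log smoothness of $(X,\Supp(\Delta-F))$ over $Z\setminus D$, and the bound $\Ivol(K_X+\Delta+f^*D)\le r\,\Ivol(K_X+\Delta)$ to place an snc model of the base in a log bounded family $(\Cz,\pp)\to T$ on which the moduli parts are restrictions of finitely many fixed divisors $\Cm_k$; then Theorem \ref{boundedness of Ambro model means DCC} uses invariance of plurigenera (Theorem \ref{Invariance of Plurigenra}) to transport all volumes to a single fiber, i.e.\ to a fixed base with a fixed nef divisor, where Theorem \ref{DCC for fixed base} applies. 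Unless you either prove (or cite precisely, with hypotheses verified) the generalized-pair DCC-of-volumes statement, or carry out a boundedness argument of this kind, the proposal does not establish the theorem.
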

	
	As an application, we prove the following boundedness result
	\begin{thm}
		\label{key theorem 2}
		Fix a DCC set $\Ii\subset [0,1]\cap \mathbb{Q}$, positive integers $n,r,l$ and a positive rational number $C>0$. Then the set 
		$$\{\mathrm{Proj}R(X,K_X+\Delta)\ |\ (X,\Delta)\in \Cd(n,\Ii,l,r),\ \Ivol(K_X+\Delta)=C \}$$
		is in a bounded family.
	\end{thm}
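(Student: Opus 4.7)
\medskip

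\noindent\textbf{Proof plan for Theorem \ref{key theorem 2}.}

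The plan is to descend the problem to a boundedness statement about generalized pairs of log general type on the base of the Iitaka fibration, and then apply a known boundedness result for such pairs with fixed volume. Given $(X,\Delta)\in \Cd(n,\Ii,l,r)$ with Iitaka fibration $f:X\to Z$, the first step is to apply the canonical bundle formula (Theorem \ref{canonical bundle formula on fibration}) to produce a birational model $\pi:Z'\to Z$ and a generalized pair $(Z',B_{Z'}+\M_{Z'})$ such that $K_X+\Delta\sim_{\QQ} f^{\prime *}(K_{Z'}+B_{Z'}+\M_{Z'})$ after a suitable base change. By hypothesis $l\M_{Z'}$ is nef and Cartier, and since $Z$ is the canonical model of $(X,\Delta)$ its pushforward $K_Z+B_Z+M_Z$ is ample with volume
\[
\vol(K_{Z'}+B_{Z'}+\M_{Z'})=\Ivol(K_X+\Delta)=C.
\]

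The second step is to verify that $(Z',B_{Z'}+\M_{Z'})$ has controlled coefficients and singularities independent of the pair. The generalized klt property of $(Z',B_{Z'}+\M_{Z'})$ follows from the klt assumption on $(X,\Delta)$ together with $\Delta_g$ being klt on the general fiber. To see that the coefficients of $B_{Z'}$ lie in a fixed DCC set $\Jj=\Jj(n,\Ii)$, I would use the explicit formula for $B_{Z'}$ via log canonical thresholds computed over prime divisors $P\subset Z'$, combined with the log smoothness over $Z\setminus D$ and the bound $\Ivol(K_X+\Delta+f^*D)\leq r\,\Ivol(K_X+\Delta)$ from Definition \ref{definition of Dd}. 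These hypotheses, together with ACC for log canonical thresholds (Hacon--McKernan--Xu), force the divisorial part $B_{Z'}$ to have coefficients in a DCC set determined only by $n,\Ii,l,r$, because the locus where coefficients could escape this DCC behaviour is precisely controlled by the bounded singular locus encoded in $D$.

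The third step is to apply boundedness of generalized klt pairs of log general type with fixed volume and bounded Cartier index of the moduli part. With $B_{Z'}\in\Jj$, $l\M_{Z'}$ nef Cartier, generalized klt singularities, and $\vol(K_{Z'}+B_{Z'}+\M_{Z'})=C$, Birkar's boundedness theorem for generalized pairs (or the corresponding result in the paper's technical toolkit) yields a log bounded family containing $(Z',\Supp B_{Z'})$. The ample generalized log canonical model of such a bounded family is again bounded, and this ample model is exactly
\[
\mathrm{Proj}\bigoplus_{m\ge 0} H^0\bigl(Z',\mathcal{O}_{Z'}(m(K_{Z'}+B_{Z'}+\M_{Z'}))\bigr)\cong \mathrm{Proj}\,R(X,K_X+\Delta),
\]
which gives the desired boundedness of the canonical models.

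The main obstacle I expect is the second step: verifying that the boundary $B_{Z'}$ produced by the canonical bundle formula has coefficients in a uniform DCC set. Without the condition that the singular locus of $f$ is controlled by a divisor $D$ with $\Ivol(K_X+\Delta+f^*D)\leq r\,\Ivol(K_X+\Delta)$, a naive application of the canonical bundle formula would only give coefficients in $[0,1]\cap \QQ$ with no DCC guarantee, which is insufficient for any boundedness theorem for generalized pairs. The delicate point is to show that the integrality/DCC property of the log canonical thresholds defining $B_{Z'}$ is preserved once one restricts attention to the prescribed divisor $D$ and uses the log smoothness of $(X,\Supp(\Delta-F))$ over $Z\setminus D$; this is essentially where Theorem \ref{key theorem} and the techniques developed for its proof feed into the boundedness conclusion.
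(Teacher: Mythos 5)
Your outline identifies the right first move (pass to the generalized pair $(Z',B_{Z'}+\M_{Z'})$ on the base via Theorem \ref{canonical bundle formula on fibration}, with $\vol(K_{Z'}+B_{Z'}+\M_{Z'})=\Ivol(K_X+\Delta)=C$ and $l\M_{Z'}$ nef Cartier), but the decisive third step is not a proof: it appeals to a black box, namely ``boundedness of generalized klt pairs of log general type with DCC coefficients, bounded Cartier index of the moduli part and fixed volume''. No such statement is available in this paper's toolkit --- it is essentially the theorem being proven. The paper's actual argument is an HMX18-style contradiction argument: a sequence $(X_i,\Delta_i)$ with $i!(K_{Z_i}+B^i_{Z_i}+\M^i_{Z_i})$ not very ample, then Theorem \ref{boundedness of Ambro model} to place the snc models in a log bounded log smooth family $(\Cz,\pp)\rightarrow T$ carrying only finitely many candidate moduli divisors $\Cm_k$, then Theorem \ref{achieve volume}, invariance of plurigenera (Theorem \ref{Invariance of Plurigenra}), Lemma \ref{log canonical model}, and the relative log canonical models of \cite[Corollary 1.4]{HMX18} to extract a uniform $N$ with $N(K_{Z_i}+B^i_{Z_i}+\M^i_{Z_i})$ very ample, giving the contradiction. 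In particular your intermediate claim that a log bounded family containing $(Z',\Supp B_{Z'})$ (where $K_{Z'}+B_{Z'}+\M_{Z'}$ is only big) automatically has bounded ample models is exactly the nontrivial point: boundedness of the big models does not formally yield boundedness of their ample models without uniform very ampleness of a fixed multiple, and producing that uniform multiple is the heart of the paper's proof. If instead you mean to quote Birkar's contemporaneous theorem \cite{Bir21b} applied directly to the ample model $(Z,B_Z+\M_Z)$, that is a genuinely different, citation-based route which the paper deliberately does not take (its proof is independent of \cite{Bir21b}); note that on that route the hypotheses involving $D$ and $r$ would play no role, which should signal that you are bypassing rather than reproving the result as stated in this framework.

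There is also a misplacement of where the hypotheses of Definition \ref{definition of Dd} enter. The DCC of $\mathrm{coeff}(B_{Z'})$ does not come from the divisor $D$ or from $\Ivol(K_X+\Delta+f^*D)\leq r\,\Ivol(K_X+\Delta)$: it is Theorem \ref{canonical bundle formula on fibration}(4), proved via ACC for log canonical thresholds \cite{HMX14} using only that the general fibre has a good minimal model. The condition on $D$ and $r$ is used instead in Theorem \ref{boundedness of Ambro model}, where $\Ivol(K_X+\Delta+f^*D)\leq r\,\Ivol(K_X+\Delta)$ bounds intersection numbers of the form $(\phi^*A)^{d-1}\cdot(\mathrm{red}(\phi^*A+F_1+B_{Z'}+h^*D)+h^*H)$, hence log-bounds the locus over which the fibration fails to be log smooth together with the support of the discriminant on a bounded base, and (via Lemma \ref{boundedness of moduli part}) pins the moduli part down to finitely many divisors $\Cm_k$ on the bounded family --- data without which neither the DCC statement (Theorem \ref{key theorem}) nor the very-ampleness argument above can be run.
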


	The idea is to prove that we can choose an snc model (see definition \ref{snc model}) of $(X,\Delta-F)\rightarrow Z$ to be in a bounded family, this is why we need the last condition in Definition \ref{definition of Dd}. We believe that the existence of $D$ and the integer $r$ naturally comes from the moduli space of the general fiber of $f$. Theorem \ref{fibers in a good moduli space} is an application of Theorems \ref{key theorem} and \ref{key theorem 2} based on this idea. 
	\vspace{0.3cm}

	\section{Prelimiary}
	\textbf{Notation and conventions}.
	Let $\Ii \subset \mathbb{Q}$ be a subset, we say $\Ii$ satisfies the DCC if there is no strictly decreasing subsequence in $\Ii$. For a birational morphism $f: Y\rightarrow X$ and a divisor $B$ on $X$, $f_*^{-1}(B)$ denotes the strict transform of $B$ on $Y$, and $\mathrm{Exc}(f)$ denotes the sum of reduced exceptional divisors of $f$. A fibration means a projective and surjective morphism with connected fibers. For a $\mathbb{Q}$-divisor $D$, a map defined by the linear system $|D|$ means a map defined by $|\lfloor D\rfloor|$. Given two $\mathbb{Q}$-Cartier $\mathbb{Q}$-divisors $A,B$, $A\sim_{\mathbb{Q}} B$ means that there is an integer $m>0$ such that $m(A-B)\sim 0$.
	
	A sub-pair $(X,\Delta)$ consists of a normal variety $X$ and a $\mathbb{Q}$-divisor $\Delta$ on $X$ such that $K_X+\Delta$ is $\mathbb{Q}$-Cartier, we call $(X,\Delta)$ a pair if in addition $\Delta$ if effective. If $g: Y\rightarrow X$ is a birational morphism and $E$ is a divisor on $Y$, the discrepancy $a(E,X,\Delta)$ is $-\mathrm{coeff}_{E}(\Delta_Y)$, where $K_Y+\Delta_Y :=g^*(K_X+\Delta) $. A sub-pair $(X,\Delta)$ is called sub-klt (respectively sub-lc) if for every birational morphism $Y\rightarrow X$ as above, $a(E,X,\Delta)>-1$ (respectively $\geq -1$) for every divisor $E$ on $Y$. A pair $(X,\Delta)$ is called klt (respectively lc) if $(X,\Delta)$ is sub-klt (respectively sub-lc) and $(X,\Delta)$ is a pair. 
	
	Let $(X,\Delta),(Y,\Delta_Y)$ be two sub-pairs and $h:Y\rightarrow X$ a birational morphism, we say $(Y,\Delta_Y)\rightarrow (X,\Delta)$ is a crepant birational morphism if $K_Y+\Delta_Y\sim_{\mathbb{Q}}h^*(K_X+\Delta)$. Two pairs $(X_i,\Delta_i),i=1,2$ are crepant birationally equivalent if there is a sub-pair $(Y,\Delta_Y)$ and two crepant birational morphisms $(Y,\Delta_Y)\rightarrow (X_i,\Delta_i),i=1,2$.
	
	A generalised pair $(X,\Delta+M)$ consists of a normal variety $X$ equipped with a birational morphism $f:X'\rightarrow X$ where $X$ is normal, a $\mathbb{Q}$-boundary $\Delta$, and a $\mathbb{Q}$-Cartier nef divisor $M'$ on $X'$ such that $K_{X}+\Delta+M$ is $\mathbb{Q}$-Cartier and $M=f_*M'$. Let $\Delta'$ be the $\mathbb{Q}$-divisor such that $K_{X'}+\Delta'+M'=f^*(K_X+\Delta+M)$, we call $(X,\Delta+M)$ a generalised klt (respectively lc) pair, if $(X',\Delta')$ is sub-klt (respectively sub-lc).
	
	Let $X\rightarrow Z$ be an algebraic contraction and $R$ a $\mathbb{Q}$-divisor on $X$, we write $R=R_v+R_h$, where $R_v$ is the vertical part and $R_h$ is the horizontal part.
	
	The language of $\mathbf{b}$-divisor was introduced by Shokurov.
	\begin{defn}
		Let $X$ be a projective variety, we say a formal sum $\mathbf{B}=\sum a_\nu \nu$, where the sum ranges over all valuations of $X$, is a $\bm{b}$-divisor, if the set
		$$F_X=\{\nu\ |\ a_\nu \neq 0\text{ and the center $\nu$ on $X$ is a divisor }\},$$
		is finite. The trace $\mathbf{B}_Y$ of $\mathbf{B}$ is the sum $\sum a_\nu B_\nu$, where the sum now ranges over the elements of $F_Y$.
	\end{defn}
	\begin{defn}
		For a klt pair $(X,\Delta)$, $\mu:X\rightarrow U$ a projective morphism such that $K_X+\Delta$ is $\mathbb{Q}$-Cartier, by \cite{BCHM10}, the canonical ring 
		$$R(X/Z,K_X+\Delta):=\oplus_{m\geq 0}\mu_*\mathcal{O}_X(m(K_X+\Delta))$$
		is a finitely generated $\mathcal{O}_U$-algebra. We define the canonical model of $(X,\Delta)$ over $U$ to be $\mathrm{Proj}R(X/U,K_X+\Delta)$. 
	\end{defn}

	Next, we state some results that we will use in what follows.
	
	\begin{thm}[{\cite[Theorem 2.12]{HMX13}}]\label{HX13 Theorem 2.12}
		Let $f:X\rightarrow Z$ be a surjective projective morphism and $(X,\Delta)$ a dlt pair such that
		\begin{itemize}
			\item for a very general point $z\in Z$, the fibre $(X_z,\Delta_z)$ has a good minimal model, and
			\item the ring $R(X/Z,K_X+\Delta)$ is finitely generated.
		\end{itemize}
		Then $(X,\Delta)$ has a good minimal model over $Z$.
	\end{thm}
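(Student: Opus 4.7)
The plan is to combine the finite generation of $R(X/Z,K_X+\Delta)$ with the fibrewise existence of good minimal models to run and successfully terminate a $(K_X+\Delta)$-MMP over $Z$, whose output will be the desired good minimal model of $(X,\Delta)/Z$.

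First I would use the finite generation hypothesis to construct the relative canonical model $Y := \mathrm{Proj}_Z R(X/Z, K_X+\Delta)$, which is projective over $Z$ and carries a $\mathbb{Q}$-Cartier divisor $A_Y$ that is ample over $Z$ and whose class realises $\phi_*(K_X+\Delta)$ for the natural rational map $\phi : X \dashrightarrow Y$. After passing to a small $\mathbb{Q}$-factorialisation (which preserves both the dlt condition and finite generation, since it is crepant), I may assume $X$ is $\mathbb{Q}$-factorial. The strategy is then to run a $(K_X+\Delta)$-MMP with scaling of an ample divisor, relative to $Z$. Restricted to the generic fibre $(X_\eta,\Delta_\eta)$ this is a $(K_{X_\eta}+\Delta_\eta)$-MMP; by hypothesis the very general fibre admits a good minimal model, and standard invariance arguments (deformation invariance of log plurigenera in the klt/dlt setting) promote this to the generic fibre, so the restricted MMP terminates at a good minimal model of $(X_\eta,\Delta_\eta)$.

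The main obstacle is upgrading this generic termination to global termination over $Z$, which is not known in general for pairs whose log canonical divisor is not big. Here the finite generation hypothesis is the decisive input: the canonical algebra $R(X/Z,K_X+\Delta)$ determines $Y$ uniquely and controls the graded pieces $\phi_*\mathcal{O}_X(m(K_X+\Delta))$, and each step of the MMP over $Z$ can be matched with a step of the induced MMP on the generic fibre, since no contraction or flip can occur over a component not dominating $Z$ once the canonical ring is already represented by $Y$. Thus the MMP halts in finitely many steps at a model $(X',\Delta_{X'})/Z$ on which $K_{X'}+\Delta_{X'}$ is nef over $Z$. Taking a common resolution $p : W \to X'$ and $q : W \to Y$ and writing $p^*(K_{X'}+\Delta_{X'}) - q^*A_Y = E$, the negativity lemma applied over $Y$ forces $E$ to be effective and $q$-exceptional, while negativity over $X'$ (using that $K_{X'}+\Delta_{X'}$ is nef over $Z$ and hence $p$-nef) forces $-E$ to be effective modulo $p$-exceptional divisors; combining these, $E=0$, so $K_{X'}+\Delta_{X'} = p_*q^*A_Y$ is semiample over $Z$. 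Hence $(X',\Delta_{X'})$ is a good minimal model of $(X,\Delta)$ over $Z$, as required.
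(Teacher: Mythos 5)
There is no proof in the paper to compare against: Theorem \ref{HX13 Theorem 2.12} is quoted verbatim from [HMX13, Theorem 2.12] and used as a black box, so your proposal has to stand on its own, and as written it does not. Your skeleton (build $Y=\mathrm{Proj}_Z R(X/Z,K_X+\Delta)$ from finite generation, pass to a $\mathbb{Q}$-factorialization, run a relative MMP with scaling, feed in the fibrewise good minimal models) is the standard strategy, but the two steps that carry the actual content are asserted rather than proved. The termination claim is the most serious gap: the statement that ``no contraction or flip can occur over a component not dominating $Z$ once the canonical ring is already represented by $Y$'' is not a valid argument and is false as a general principle --- a $(K_X+\Delta)$-MMP with scaling over $Z$ can a priori perform infinitely many steps whose flipping loci are vertical over $Z$, and the fact that $\mathrm{Proj}_Z R(X/Z,K_X+\Delta)$ is fixed does nothing to exclude them. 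Ruling this out is precisely the hard core of the theorem; in the literature it is done by special termination and induction on dimension, together with the results of Lai and Hacon--Xu on lifting a good minimal model from the generic fibre (equivalently, the theorem that an MMP with scaling terminates once a good minimal model is known to exist). Likewise, the passage from very general fibres to the generic fibre is a spreading-out/constructibility argument, not ``deformation invariance of log plurigenera,'' which is neither available in this generality nor the relevant tool.

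The final semiampleness step also fails. The interesting case of the theorem is when $K_X+\Delta$ is \emph{not} big over $Z$ (otherwise one concludes from [BCHM10] and the basepoint-free theorem); then $\phi\colon X\dashrightarrow Y$ is a fibration of strictly smaller relative dimension, not a birational contraction, so there is no ample divisor $A_Y$ with $p^*(K_{X'}+\Delta_{X'})-q^*A_Y$ exceptional, $q\colon W\to Y$ is not birational, and the negativity lemma simply does not apply in the way you use it. Even in the birational case your signs are reversed: $E=p^*(K_{X'}+\Delta_{X'})-q^*A_Y$ is $q$-nef, not $q$-anti-nef, so negativity over $Y$ does not yield $E\geq 0$ and $q$-exceptional. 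More fundamentally, ``$K_{X'}+\Delta_{X'}$ nef over $Z$ plus $R(X/Z,K_X+\Delta)$ finitely generated'' does not formally imply ``semiample over $Z$'': that is exactly the relative abundance statement which the fibrewise hypothesis is meant to furnish, and it requires genuine input (Kawamata's $\nu=\kappa$ abundance, or the lifting techniques of [HX13]). As it stands, your argument reduces the theorem to two assertions (termination of the vertical part of the MMP, and relative abundance from nefness) each of which is essentially equivalent to the theorem being proved.
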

	\begin{thm} [{\cite[Theorem 1.3]{BZ16}}]\label{thm: gen eff bir}
		Let $d,r$ be two positive integers and $\Ii\subset [0,1]$ a DCC set of real numbers. Then there is a positive number $m_0$ depending only on $d,r$ and $\Ii$ satisfying the following. Assume that
		\begin{itemize}
			\item $(Z,B)$ is a projective lc pair of dimension $d$,
			\item $\mathrm{coeff}(B)\in \Ii$,
			\item $rM$ is a nef Cartier divisor, and
			\item $K_Z+B+M$ is big,
		\end{itemize}
		then the linear system $|m(K_Z+B+M)|$ defines a birational map for every positive integer $m$ such that $m_0\mid m$.
	\end{thm}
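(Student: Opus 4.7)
The plan is to adapt the effective-birationality machinery of Hacon--McKernan--Xu to the generalised-pair setting, using DCC of volumes and ACC of generalised log canonical thresholds as the main inputs. After passing to a $\mathbb{Q}$-factorial generalised dlt modification (which preserves generic birationality of $|m(K_Z+B+M)|$) and running a $(K_Z+B+M)$-MMP, we may assume that $K_Z+B+M$ is nef and big. The task then reduces to producing a uniform $m_0=m_0(d,r,\Ii)$ such that for any two very general points $x_1,x_2\in Z$ there is a section of $\mathcal{O}_Z(\lfloor m_0(K_Z+B+M)\rfloor)$ vanishing at one and not the other.

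The first key step is a volume lower bound: I would show that there exists $v_0=v_0(d,r,\Ii)>0$ with $\vol(K_Z+B+M)\ge v_0$ for every pair in the class. This is the DCC of volumes for generalised pairs with DCC coefficients and $rM$ nef Cartier, and is proved in tandem with the present statement by induction on $d$. As a consequence, together with covering-family arguments in the spirit of \cite{HMX14}, one obtains log birational boundedness: there exist a bounded log smooth pair $(\bar Z,\Sigma)$ and a birational map $\bar Z\dashrightarrow Z$ such that $\Sigma$ contains the strict transform of $B$ together with the exceptional divisors, and such that $|\lfloor N(K_Z+B+M)\rfloor|$ defines a birational map for some $N=N(d,r,\Ii)$.

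The second key step is to upgrade birational boundedness to effective birationality. Given $x_1,x_2\in Z$ very general, using the volume lower bound and a standard cutting-down argument one produces an effective $\mathbb{Q}$-divisor $D\sim_{\mathbb{Q}}\lambda(K_Z+B+M)$ with $\lambda=\lambda(d,r,\Ii)$ bounded, such that $(Z,B+D+M)$ is not klt at $x_1$ and is lc near $x_2$. Tie-breaking against the bounded divisor $\Sigma$ (which controls the non-klt locus thanks to ACC of generalised lc thresholds) isolates a minimal non-klt centre through $x_1$ avoiding $x_2$. Kawamata--Viehweg/Nadel vanishing together with the bounded Cartier index of $rM$ then lets one lift a separating section, yielding the desired $m_0$.

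The main obstacle is the DCC of volumes in the generalised setting: it is essentially of the same depth as the conclusion and cannot be cited from the non-generalised theory as a black box. I would expect to handle it by simultaneous induction on dimension, feeding the lower-dimensional effective birationality back into the volume estimate via restriction to non-klt centres, while the bounded index of $rM$ plays the role normally played by the integrality of $K+B$. Once this inductive scaffolding is in place, the remaining steps are essentially formal adaptations of the arguments in \cite{HMX14}.
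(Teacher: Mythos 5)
The paper does not prove this statement at all: Theorem \ref{thm: gen eff bir} is quoted verbatim from \cite{BZ16} (Theorem 1.3 there) and used as a black box, so there is no internal argument to compare yours with; the only meaningful comparison is with the Birkar--Zhang proof itself. Measured against that source, your outline reproduces the expected strategy in spirit (reduce to a minimal model, establish log birational boundedness, create and isolate non-klt centres through very general points, lift separating sections by vanishing, and induct on dimension with the Cartier index $r$ of $M$ playing the role of integrality of the boundary), so the overall plan is sound.

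As a proof, however, it has a genuine gap, which you yourself flag: the uniform lower bound (equivalently the DCC behaviour) of $\vol(K_Z+B+M)$ in the generalised setting is declared to be ``proved in tandem'' but never supplied, and it carries essentially the full weight of the theorem. In \cite{BZ16} this is not obtained by citing a DCC-of-volumes statement for generalised pairs (none was available at that point); the induction is instead run jointly with the boundedness of the pseudo-effective threshold (the paper's Theorem \ref{pseudo eff threshold}, i.e. Theorem 8.1 of \cite{BZ16}) and ACC for generalised lc thresholds, with the volume estimates extracted along the way via generalised adjunction on non-klt centres, where one must also control the nef part $M$ restricted to those centres --- precisely the point where the generalised-pair formalism of \cite{BZ16} is indispensable and where your sketch is silent. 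Your tie-breaking and lifting step needs the same care: $M$ is only nef, neither effective nor semiample, so ``tie-breaking against $\Sigma$'' and ``Nadel vanishing with bounded Cartier index of $rM$'' must be reformulated for generalised pairs rather than imported verbatim from \cite{HMX14}. In short, the proposal is a reasonable road map, but the central inductive engine is missing rather than merely routine, so it does not yet constitute a proof of the cited theorem.
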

	\begin{thm} [{\cite[Theorem 8.1]{BZ16}}]\label{pseudo eff threshold}
		Let $\Ii$ be a DCC set of non-negative real numbers and $d$ a natural number. Then there is a real number $e\in (0,1)$ depending only on $\Ii,d$ such that if
		\begin{itemize}
			\item $(Z,B)$ is projective lc of dimension $d$,
			\item $M=\sum \mu_j M_j$ where $M_j$ are nef Cartier divisors,
			\item the coefficients of $B$ and the $\mu_j$ are in $\Ii$, and
			\item $K_Z+B+M$ is a big divisor,
		\end{itemize}
		then $K_Z+eB+eM$ is a big divisor.
	\end{thm}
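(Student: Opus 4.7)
The approach is to reduce to the global ACC for generalized lc pairs (\cite[Theorem 1.6]{BZ16}) via a contradiction argument. Suppose no such $e$ exists. Then there is a sequence of projective generalized lc pairs $(Z_i, B_i + M_i)$ of dimension $d$ satisfying the hypotheses, with $K_{Z_i} + B_i + M_i$ big, but with bigness thresholds
\[
t_i := \inf\{t \in [0,1] : K_{Z_i} + tB_i + tM_i \text{ is big}\}
\]
satisfying $t_i \to 1^{-}$. The goal is to show that the collection $\{t_i\}$ is forced to lie in an ACC set, which will contradict the strict inequalities $t_i < 1$.

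First I would replace each $(Z_i, B_i + M_i)$ by a $\mathbb{Q}$-factorial dlt model, which preserves both the coefficient set and the threshold $t_i$. For a rational $s_i$ slightly below $t_i$, the divisor $K + s_i B + s_i M$ fails to be big; after a further small decrease it may be arranged to fail to be pseudo-effective as well. Running a generalized-pair MMP (in the spirit of \cite{BCHM10}) on $(Z_i, s_i B_i + s_i M_i)$ then terminates with a Mori fibre space $Z_i' \to T_i$. Passing to a general fibre $F_i$, the divisor $K_{F_i} + t_i B_i|_{F_i} + t_i M_i|_{F_i}$ is numerically trivial: $K + s_i(B+M)$ is relatively anti-ample on $Z_i' \to T_i$, while $K + t_i(B+M)$ lies on the boundary of the pseudo-effective cone, forcing relative numerical triviality at the threshold value $t = t_i$.

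Now $(F_i, t_i B_i|_{F_i} + t_i M_i|_{F_i})$ is a generalized lc pair of dimension at most $d$ whose generalized log canonical divisor is numerically trivial, with boundary coefficients in $t_i \cdot \Ii$ and nef coefficients in $t_i \cdot \Ii$. The global ACC for generalized lc pairs with numerically trivial log canonical divisor then forces $\{t_i\}$ to satisfy ACC, contradicting $t_i < 1$ and $t_i \to 1$.

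The hard part will be the MMP step: termination of the generalized-pair MMP in this setting is delicate, and one must carefully track the structural decomposition $M_i = \sum \mu_{i,j} M_{i,j}$ with individual $M_{i,j}$ nef Cartier through the MMP so that restriction to a general fibre yields a generalized pair whose nef part still admits a decomposition with coefficients in $\Ii$. One must also verify that adjunction is compatible with the generalized-pair structure and that the restricted boundary coefficients continue to lie in $t_i \cdot \Ii$, so that the global ACC is genuinely applicable on the general fibre.
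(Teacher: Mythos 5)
The paper does not prove this statement; it quotes it verbatim as \cite[Theorem 8.1]{BZ16}. Your strategy---a contradiction argument reducing to the global ACC for numerically trivial generalized pairs via an MMP to a Mori fibre space---is the standard route and, to my recollection, is essentially the one taken in \cite{BZ16}, so the overall architecture is sound. Two places need tightening.

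First, you hedge on whether $K_Z+s_iB_i+s_iM_i$ fails to be pseudo-effective for $s_i$ just below the bigness threshold $t_i$, saying this ``may be arranged'' after a further decrease. In fact no further decrease is needed: under the standing hypothesis that $K_Z+B+M$ is big, the bigness threshold and the pseudo-effective threshold coincide. Indeed if $K_Z+\sigma(B+M)$ is pseudo-effective then for every $t\in(\sigma,1]$ one may write $K_Z+t(B+M)$ as a positive combination of the pseudo-effective class $K_Z+\sigma(B+M)$ and the big class $K_Z+B+M$, hence it is big. So every $s_i<t_i$ already produces a non-pseudo-effective divisor, and you should say so explicitly rather than leaving it open.

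Second, the assertion that $K_{F_i}+t_iB_i|_{F_i}+t_iM_i|_{F_i}\equiv 0$ on a general fibre of the Mori fibre space is not correct as stated. After pushing forward through the $(K+s_i(B+M))$-MMP, the class $K+s_i(B+M)$ is anti-ample on $F_i$, while $K+t_i(B+M)$ is only pseudo-effective, i.e.\ non-negative on $F_i$ (the relative Picard number being $1$). What you actually obtain is a unique $t_i^*\in(s_i,t_i]$ with $K_{F_i}+t_i^*(B+M)|_{F_i}\equiv 0$. This is harmless: since $s_i$ may be taken arbitrarily close to $t_i\to 1^-$, one gets $t_i^*\to 1^-$ as well, and the remainder of the argument runs with $t_i^*$ in place of $t_i$. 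Finally, to invoke global ACC you must check that the set $\{t_i^*a : a\in\Ii\}$ is DCC when $t_i^*\uparrow 1$ (a short argument: a strictly decreasing sequence $t^*_{i_k}a_k$ with $a_k$ non-decreasing in the DCC set $\Ii$ would force $t^*_{i_k}$, hence $i_k$, to be strictly decreasing), and that some restricted coefficient is non-zero for each $i$; the latter holds because $(B+M)|_{F_i}$ is ample on $F_i$. You flag these in your last paragraph; they are precisely the details to be supplied.
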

	
	\begin{thm} [{\cite[Theorem 1.10]{Fil18}}]\label{DCC for fixed base}
		Let $\Ii \subset [0,1]\cap \mathbb{Q}$ be a DCC set, $(W,D)$ a log smooth pair with $D$ reduced, and $M$ a fixed $\mathbb{Q}$-Cartier $\mathbb{Q}$-divisor on $W$. Suppose $\Cd$ is the set of all projective simple normal crossing pairs $(Z,B)$ such that $\mathrm{coeff}(B)\subset \Ii$, there exists a birational morphism $f: Z\rightarrow W$ and $f_*B\leq D$. Then, the set
		$$\{\mathrm{vol}(K_Z+B+f^*M)\ |\ (Z,B)\in \Cd \}$$
		satisfies the DCC.
	\end{thm}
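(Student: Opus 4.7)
The plan is to identify $\vol(K_Z+B+f^*M)$ with a volume computed on the fixed base $W$, exploiting the fact that the relative canonical divisor together with the $f$-vertical part of $B$ assemble into an effective exceptional divisor, whose presence does not alter volumes; then to deduce DCC from monotonicity and continuity of the volume combined with the DCC of the coefficient set $\Ii$.

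First, by replacing $(W,D)$ with a log resolution, I may assume $D+\Supp(M)$ is snc and $\Supp(M)\subseteq D$; this reduction is harmless since any $(Z,B,f)\in\Cd$ can be lifted to the new model by adding further effective exceptional divisors that contribute nothing to the volume. For $(Z,B,f)\in\Cd$, decompose $B=B^h+B^v$ into horizontal and $f$-exceptional parts, and write $K_Z=f^*K_W+E_f$ with $E_f\geq 0$ effective $f$-exceptional (using that $W$ is smooth). This yields
\[
K_Z+B+f^*M=f^*(K_W+f_*B+M)+(E_f+B^v),
\]
where $E_f+B^v$ is effective and $f$-exceptional, while $f_*B$ is a $\mathbb{Q}$-divisor on $W$ supported on $D$ with coefficients in $\Ii\cup\{0\}$. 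For any positive integer $m$ clearing the denominators, the projection formula combined with the identity $f_*\mathcal{O}_Z(k(E_f+B^v))=\mathcal{O}_W$ (valid since $W$ is normal and $E_f+B^v$ is effective $f$-exceptional) gives
\[
h^0(Z,m(K_Z+B+f^*M))=h^0(W,m(K_W+f_*B+M)).
\]
Taking $m\to\infty$ along a sufficiently divisible sequence, $\vol(K_Z+B+f^*M)=\vol(K_W+f_*B+M)$.

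Next, the possible $f_*B=\sum_i a_iD_i$ range over a fixed finite-dimensional polytope of divisors supported on the components $D_i$ of $D$, with coefficients $a_i\in\Ii\cup\{0\}$. The function $(a_i)\mapsto \vol(K_W+\sum_i a_iD_i+M)$ is continuous on the big cone and non-decreasing in each $a_i$. If the set of volumes failed DCC, one would obtain a strictly decreasing sequence $v_n=\vol(K_W+\sum_i a_i^{(n)}D_i+M)$; passing to a subsequence, $a_i^{(n)}\to a_i^\infty$ for each $i$. Since $\Ii$ is DCC, every convergent sequence in $\Ii$ approaches its limit from below (or is eventually constant), so $a_i^{(n)}\leq a_i^\infty$ for $n$ large, and monotonicity yields $v_n\leq v_\infty:=\vol(K_W+\sum_i a_i^\infty D_i+M)$. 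Continuity of the volume on the big cone forces $v_n\to v_\infty$, contradicting the fact that a strictly decreasing sequence bounded above by its own limit cannot converge to that limit.

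The main technical obstacle is justifying the volume identity rigorously when $B$ has $\mathbb{Q}$-coefficients with unbounded denominators (coming from the DCC set $\Ii$): one must pass to sufficiently divisible $m$ to clear all denominators before applying the projection formula, and then take the limit defining the volume. A secondary point concerns the boundary of the big cone in the continuity argument, but there $v_\infty=0$ forces $v_n=0$ eventually, again contradicting strict decrease.
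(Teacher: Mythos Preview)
Your central identity $\vol(K_Z+B+f^*M)=\vol(K_W+f_*B+M)$ is false, and the error is in the displayed formula
\[
K_Z+B+f^*M=f^*(K_W+f_*B+M)+(E_f+B^v).
\]
You have forgotten that $f^*(f_*B)$ acquires exceptional components: if $B^h=\sum a_j\tilde D_j$ then $f^*(f_*B)=\sum a_j f^*D_j=B^h+\sum_j a_j(\text{exc.\ part of }f^*D_j)$, so the correct exceptional discrepancy is $E_f+B^v-\sum_j a_j(\text{exc.\ part of }f^*D_j)$, which need not be effective. Concretely, let $W=\mathbb{P}^2$, $D=L_1+L_2$ two lines through a point $p$, $M=4H$, and $f\colon Z\to W$ the blow-up at $p$ with exceptional curve $E$. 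Take $B=a\tilde L_1+a\tilde L_2$ with $a=0.9\in\Ii$. Then $f_*B=a(L_1+L_2)$, $K_W+f_*B+M\sim(1+2a)H$ has volume $(1+2a)^2=7.84$, whereas $K_Z+B+f^*M\sim f^*((1+2a)H)+(1-2a)E=2.8\,f^*H-0.8\,E$ is nef with self-intersection $2.8^2-0.8^2=7.2$. So the two volumes differ, and pushing forward to $W$ genuinely loses information.

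This is not a technicality: the whole point of the theorem is that higher models $Z$ can lower the volume below what one sees on $W$, and the DCC is a statement about this entire range. The actual argument (in \cite{Fil18}, following \cite{HMX13}) does not reduce to a single volume on $W$; instead one shows that it suffices to consider $f$ obtained by blowing up strata of $D$, then uses a limiting/terminalisation trick together with the pseudo-effective threshold (Theorem~\ref{pseudo eff threshold}) to compare $\vol(K_Z+B+f^*M)$ with $\vol(K_Z+aB+f^*M)$ for some fixed $a<1$, the latter being controllable because $(W,aD)$ is klt. The paper itself does not reproduce this proof (the result is quoted), but you can see the shape of the argument in the proof of Theorem~\ref{achieve volume}.
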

	\begin{thm}[{\cite[Theorem 1.12]{Fil18}}]\label{Invariance of Plurigenra}
		Let $(\Cz,\Cb)\rightarrow T$ be a log smooth morphism, $\{x_i\}_{i\geq 1}\subset T$ a set of closed points. Denote by $(Z_i,B_i)$ the log pair given by the fiber product $(\Cz,\Cb)\times_T x_i$. Assume that
		\begin{itemize}
			\item $0\leq \Cb\leq \mathrm{red}(\Cb)$, and
			\item there is a $\mathbb{Q}$-divisor $\Cm$ on $\Cz$ such that $M_i=\Cm|_{Z_i}$ is nef for every $i$.
		\end{itemize}
		Then, we have $\mathrm{vol}(Z_i,K_{Z_i}+B_i+M_i)=\mathrm{vol}(Z_j,K_{Z_j}+B_j+M_j)$ for every $i,j\in \mathbb{N}$.
	\end{thm}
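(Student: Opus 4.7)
The plan is to show that the function $i\mapsto \operatorname{vol}(K_{Z_i}+B_i+M_i)$ is both upper and lower semicontinuous on $T$, so that on a connected base it must be constant on the set of closed points. The setup is a flat log smooth family, and the hypothesis $0\leq \Cb\leq \operatorname{red}(\Cb)$ together with log smoothness guarantees that for every sufficiently divisible $m$, the coherent sheaf $\cO_{\Cz}(\lfloor m(K_{\Cz/T}+\Cb)+m\Cm\rfloor)$ has restriction to the fiber $Z_i$ equal to $\cO_{Z_i}(\lfloor m(K_{Z_i}+B_i+M_i)\rfloor)$, since $\Cb$ contains no vertical components of coefficient $>1$ and $K_{\Cz/T}+\Cb$ commutes with base change. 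Upper semicontinuity of $h^0$ in flat families then yields upper semicontinuity of the volume.

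For lower semicontinuity I would invoke an extension theorem of Siu/Hacon--McKernan type, as developed for pairs: sections of $m(K_{Z_i}+B_i+M_i)$ lift to sections over a neighborhood of $x_i$ in $\Cz$. The nefness of $M_i=\Cm|_{Z_i}$ is the essential positivity input, feeding Kawamata--Viehweg vanishing on the higher cohomology that needs to be killed in any such extension argument, while log smoothness together with the coefficient bound on $\Cb$ supplies the klt/plt transversality conditions. The resulting lift gives $h^0(Z_i,m(K_{Z_i}+B_i+M_i))\leq h^0(Z_j,m(K_{Z_j}+B_j+M_j))$ for $j$ general and $i$ arbitrary, and passing to the limit in $m$ yields lower semicontinuity.

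The main obstacle is that $\Cm$ is only assumed to be nef on each closed fiber, not pulled back from $T$ nor globally semiample on $\Cz$. A useful alternative that sidesteps some difficulties is to approximate: for an ample divisor $A$ on $\Cz$ and small $\varepsilon>0$, $\Cm+\varepsilon A$ is ample on every fiber, so Kodaira vanishing makes $h^0(Z_i,m(K_{Z_i}+B_i+M_i+\varepsilon A|_{Z_i}))$ equal to a flat-deformation-invariant Euler characteristic. Continuity of volumes on the big cone then recovers the statement as $\varepsilon\to 0$. Making either approach fully rigorous in the generalised-pair setting, and in particular verifying the vanishing/extension hypotheses when only fiberwise nefness of $\Cm$ is available, is the technical heart of the argument, and is exactly the content of the cited theorem of Filipazzi.
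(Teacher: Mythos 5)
The paper does not prove this statement at all: it is quoted verbatim as \cite[Theorem 1.12]{Fil18}, so there is no internal argument to compare with, and your sketch has to stand on its own. It does not. Your first route concedes the decisive point: the ``lower semicontinuity'' step, extending sections of $m(K_{Z_i}+B_i+M_i)$ from a fiber to a neighborhood in $\Cz$ in the presence of a divisor that is only nef fiberwise, is exactly the Siu/Hacon--McKernan-type deformation-invariance statement whose generalized-pair version is the theorem being proved (Filipazzi's contribution is precisely to push the invariance of log plurigenera of \cite{HMX13} so as to accommodate the nef part). Invoking it, and then writing that verifying its hypotheses ``is exactly the content of the cited theorem,'' makes the argument circular rather than a proof.

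The alternative route is not just incomplete but breaks at a specific step. To identify $h^0\bigl(Z_i,\lfloor m(K_{Z_i}+B_i+M_i+\varepsilon A|_{Z_i})\rfloor\bigr)$ with an Euler characteristic you need the higher cohomology to vanish, and Kodaira/Kawamata--Viehweg vanishing for a divisor of the form $K_{Z_i}+\{\text{boundary}\}+(m-1)(K_{Z_i}+B_i+M_i+\varepsilon A)$ requires $(m-1)(K_{Z_i}+B_i+M_i+\varepsilon A)$ to be nef (and big); but $K_{Z_i}+B_i+M_i$ is at best big and is never assumed nef, so no vanishing is available and flat invariance of $\chi$ does not give invariance of $h^0$. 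Two further points need care even in the parts you treat as routine: the compatibility $\cO_{\Cz}(\lfloor m(K_{\Cz/T}+\Cb+\Cm)\rfloor)|_{Z_i}\cong\cO_{Z_i}(\lfloor m(K_{Z_i}+B_i+M_i)\rfloor)$ requires the support of the fractional part --- including $\Supp\Cm$ --- to be in simple normal crossing relative to $T$, which is not among the hypotheses (only $(\Cz,\Cb)$ is log smooth over $T$); and cohomological upper semicontinuity for each fixed $m$ only compares a given closed point with very general nearby points, so even granting both halves one still has to argue constancy on all closed points over a connected base. As it stands, the essential content of the theorem is assumed, not proved.
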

	
	The following is a general version of the canonical bundle formula given in \cite{Kol07}.
	\begin{thm}
		\label{canonical bundle formula}
		Let $X,Z$ be normal projective varieties and $f: X\rightarrow Z$ a dominant morphism with generic fiber ${X_\eta}$. Let $R$ be a $\mathbb{Q}$-divisor on $X$ such that $K_X+R$ is $\mathbb{Q}$-Cartier and $B$ a reduced divisor on $Z$. We call $(X,R)\rightarrow Z$ an lc-trivial fibration if
		\begin{itemize}
			\item $K_X+R\sim_{\mathbb{Q},Z}0$,
			\item $h^0({X_\eta},\mathcal{O}_{X_\eta}( \lceil R_{\leq 0}\rceil  ) )=1$, and
			\item $f$ has slc fibers in codimension 1 over $Z\setminus B$, that is, if $D$ is prime divisor not contained in $B$, then
			\begin{itemize}
				\item no component of $R$ dominates $D$, and
				\item $(X,R+f^*D)$ is lc over the generic point of $D$,
			\end{itemize}
		\end{itemize}
		then one can write 
		$$K_X+R\sim_{\mathbb{Q}} f^*(K_Z+B_Z+\M_Z),\ \mathrm{where}$$
		\begin{enumerate}
			\item[(a)] $\M_Z=M(X/Z,R)$ is the moduli part. It is a $\bm{b}$-divisor depending only on  the crepant birationally equivalent class of $({X_\eta},R|_{X_\eta})$ and $Z$ such that:
			\begin{itemize}
				\item there is a birational morphism $Z'\rightarrow Z$ such that $\M_Z$ is the pushforward of $\M_{Z'}:=M(X'/Z',R')$, and $\M_{Z''}=M(X''/Z'',R'')=\pi^*\M_{Z'}$ for any birational morphism $Z''\rightarrow Z'$, where $X'$ is the normalization of the main component of $X\times_Z Z'$ and $(X',R')\rightarrow (X,R)$ is a crepant birational morphism. In this case, we call $\M$ descends on $Z'$.
				\item If $X\rightarrow Z$ and $R,B$ satisfy the standard normal crossing assumption, see Definition \ref{standard normal crossing}, then $\M$ descends on $Z$.
			\end{itemize}
			
			\item[(b)] $B_Z$ is the unique $\mathbb{Q}$-divisor supported on $B$ for which there is a codimension $\geq 2$ closed subset $W\subset Z$ such that:
			\begin{itemize}
				\item $(X\setminus f^{-1}(W),R+f^*(B-B_Z))$ is lc.
				\item every irreducible component of $B$ is dominated by a log canonical centre of $(X,R+f^*(B-B_Z))$.
				
			\end{itemize}
			\item[(c)] If the morphism $X\rightarrow Z$ and divisors $R,B$ satisfy the standard normal crossing assumption, see definition \ref{standard normal crossing}, then $B_Z$ is also the unique $\mathbb{Q}$-divisor such that $ R_v+f^*(B-B_Z)\leq \mathrm{red}(f^*B)$.
		\end{enumerate}
		\begin{proof}
			(a), (b) is {\cite[Theorem 8.5.1]{Kol07}}. 
			
			For (c), if the morphism $X\rightarrow Z$ and $R, B$ satisfy the standard normal crossing assumption, then $(Z, B)$ is log smooth. We replace $R $ by $R+f^*(B-B_Z)$ and $B_Z$ by $B_Z+(B-B_Z)=B$, then, 
			\begin{itemize}
				\item there is a codimension $\geq 2$ closed subset $W\subset Z$ such that $(X\setminus f^{-1}(W),R)$ is sub log canonical, and
				\item every irreducible component of $B$ is dominated by a log canonical centre of $(X,R)$.
			\end{itemize}
			It is easy to see that to prove (c), we only need to prove that $W$ can be chosen to be the empty set, which is equal to saying that $(X, R)$ is lc.
			
			Suppose $(X,R)$ is not log canonical, consider the following diagram
			$$\xymatrix{
				(X',R')  \ar[d] _{f'} \ar[r]^{\pi_X}    &(X,R)\ar[d]^{f}\\
				Z'\ar[r]_{\pi}&    Z.	 
			} $$
			where $\pi$ is birational, $\pi_X$ is crepant birational, $f': X'\rightarrow Z'$ is equidimensional and $\pi_X$ extracts a non-lc place of $(X,R)$, which is denoted by $E$, then $\mathrm{coeff}_E(R')>1$. By the canonical bundle formula,
			$$K_{X'}+R'\sim_{\mathbb{Q}} f'^*(K_{Z'}+B'+\M_{Z'}).$$
			By assumption, $X\rightarrow Z$ and divisors $R,B$ satisfy the standard normal crossing assumption, then $\M$ descends on $Z$ and $\pi^*\M_Z= \M_{Z'}$ and $K_{Z'}+B'\sim_{\mathbb{Q}}\pi^*(K_Z+B)$. Because $(Z,B)$ is lc, $(Z',B')$ is sub-lc.
			
			Let $\tilde{B}$ be a reduced divisor on $Z'$ such that $f'$ has slc fibers in codimension 1 over $Z'\setminus \tilde{B}$. By (b), $B'$ is the unique $\mathbb{Q}$-divisor for which there is a codimension $\geq 2$ closed subset $W'\subset Z'$ such that
			\begin{enumerate}
				\item $(X'\setminus f'^{-1}(W'),R'+f'^*(\tilde{B}-B'))$ is sub-lc, and
				\item every irreducible component of $B'$ is dominated by a log canonical centre of $(X',R'+f'^*(\tilde{B}-B'))$.
				
			\end{enumerate}
			Because $f'$ is equidimensional, $\mathrm{coeff}_E(R'+f'^*(\tilde{B}-B'))\leq 1$ and $\mathrm{coeff}_E(R')>1$, then $\mathrm{coeff}_E(f'^*(\tilde{B}-B'))< 0$. Since $\tilde{B}$ is reduced and $E$ is vertical, then $\mathrm{coeff}_{f'(E)}(B')>1$, which contradicts with that $(Z',B')$ is sub-lc.
		\end{proof}
	\end{thm}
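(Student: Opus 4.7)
Parts (a) and (b) are cited from \cite{Kol07}, so my plan concerns part (c). The aim is to show that once $(X \to Z, R, B)$ satisfies the standard normal crossing assumption, the divisor $B_Z$ produced in (b) admits the explicit description $R_v + f^*(B - B_Z) \leq \mathrm{red}(f^*B)$, and is the unique such divisor.

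First I would normalize the setup by replacing $R$ with $R + f^*(B - B_Z)$ and simultaneously $B_Z$ with $B$. This substitution preserves the SNC assumption and the hypotheses of an lc-trivial fibration, and reduces the target inequality to $R_v \leq \mathrm{red}(f^*B)$. Combined with the uniqueness in (b), the entire task reduces to proving that $(X, R)$ is genuinely sub-lc everywhere rather than merely on the complement of some codimension $\geq 2$ locus $W \subset Z$ allowed by (b). In other words, the point is to upgrade the conclusion of (b) from "sub-lc in codimension one over $Z$" to "sub-lc globally" under the SNC hypothesis.

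To establish this log canonicity, I would argue by contradiction. Assume some prime divisor $E$ on a birational model $\pi_X : (X', R') \to (X, R)$ is a non-lc place, so $\mathrm{coeff}_E(R') > 1$. Such a place is forced to be vertical because the generic fiber of an lc-trivial fibration is sub-lc. Choose a compatible base change $\pi : Z' \to Z$ such that the induced $f' : X' \to Z'$ is equidimensional. By the SNC assumption and part (a), $\M$ descends on $Z$, so $\M_{Z'} = \pi^* \M_Z$ and $K_{Z'} + B' \sim_{\mathbb{Q}} \pi^*(K_Z + B)$, making $(Z', B')$ sub-lc since $(Z, B)$ is log smooth. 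Now apply (b) to $f' : X' \to Z'$ with its own slc locus $\tilde B$: there is a codimension $\geq 2$ subset $W' \subset Z'$ off which $(X', R' + f'^*(\tilde B - B'))$ is sub-lc. Equidimensionality forces $f'(E)$ to be a prime divisor, hence $f'(E) \not\subseteq W'$ and $E \not\subseteq f'^{-1}(W')$, so $\mathrm{coeff}_E(R' + f'^*(\tilde B - B')) \leq 1$. Combined with $\mathrm{coeff}_E(R') > 1$ this gives $\mathrm{coeff}_E(f'^*(\tilde B - B')) < 0$, and since $\tilde B$ is reduced and $E$ is vertical we conclude $\mathrm{coeff}_{f'(E)}(B') > 1$, contradicting sub-log canonicity of $(Z', B')$.

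The main obstacle, and the reason the SNC hypothesis is indispensable, is the coordinated choice of models: one simultaneously needs the moduli $\bm{b}$-divisor to descend already on the original $Z$ (so that $(Z', B')$ inherits sub-log canonicity from the log smoothness of $(Z, B)$), and the base change $Z' \to Z$ to be fine enough that $f'$ becomes equidimensional (so that the vertical prime $E$ maps to a prime divisor on $Z'$ and the discriminant from (b) gives the needed inequality). Reconciling these two demands is the content of SNC, so this is where the argument has to be executed with care; the rest is essentially bookkeeping with the canonical bundle formula.
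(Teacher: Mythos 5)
Your argument is correct and matches the paper's proof step for step: same normalization $R \mapsto R + f^*(B-B_Z)$, same reduction to showing $(X,R)$ is sub-lc globally, same contradiction via an equidimensional base change $f':X'\to Z'$, the descent of $\M$ on $Z$ giving sub-lc-ness of $(Z',B')$, and the same coefficient computation $\mathrm{coeff}_E(f'^*(\tilde B - B')) < 0$ leading to $\mathrm{coeff}_{f'(E)}(B') > 1$. The only differences are that you spell out two small points the paper leaves implicit (why $E$ is vertical and why equidimensionality forces $E\not\subseteq f'^{-1}(W')$), which is a slight improvement in exposition but not a different route.
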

	\begin{defn}[{\cite[Definition 8.3.6]{Kol07}}]\label{standard normal crossing}
		Let $f:X\rightarrow Z$ be a projective morphism between normal projective varieties, $R$ be a $\mathbb{Q}$-divisor on $X$ and $B$ be a divisor on $Z$. We say that $f: X\rightarrow Z$ and $R, B$ satisfy the standard normal crossing assumption if the following hold:
		\begin{enumerate}
			\item $X,Z$ are smooth.
			\item $\Supp (R+f^*B)$ and $B$ are snc divisors.
			\item $f$ is smooth over $Z\setminus B$.
			\item $\Supp R$ is a relative snc divisor over $Z\setminus B$.
		\end{enumerate}
	\end{defn}
	
	In practice, the assumptions on $X$ and the divisors $R, B$ are completely harmless. By contrast, we have to do some work to reduce the problems on $Z$ to the problems on the following "good" birational model of $Z$. 
	\begin{defn}
		\label{snc model}
		An snc model of $f:(X,R)\rightarrow Z$ is a birational model $Z'\rightarrow Z$, such that there is a reduced divisor $D'$ on $Z'$, a $\mathbb{Q}$-divisor $B$ on $Z$, and a crepant birational morphism $\phi: (X', R')\rightarrow (X, R+f^*B)$, such that the morphism $X'\rightarrow Z'$ and $R', D'$ satisfy the standard normal crossing assumption.
		
	\end{defn}
	
	The next theorem says that the canonical bundle formula works on the Iitaka fibration of an lc pair.
	
	\begin{thm}\label{canonical bundle formula on fibration}
		Let $(X,\Delta)$ be a $n$-dimensional klt pair, $f: X\rightarrow Z$ an algebraic contraction such that $\kappa(X_\eta,K_{X_\eta}+\Delta|_{X_\eta})=0$, where $X_\eta$ is the generic fiber of $f$, and a birational morphism $g_W:W\rightarrow Z$. Then there is a commutative diagram
		$$\xymatrix{
			X  \ar[d] _{f}& &   X' \ar[ll]_{{h_X}} \ar[d]^{f'}\\
			Z	&  & 	 {Z'}\ar[ll]^{h}
		} $$
		such that:
		\begin{enumerate}
			
			\item[(1)] $h,{h_X}$ are birational, $h$ factors through $g_W$ and $f'$ is equidimensional.
			\item[(2)] ${Z'}$ is smooth and $X'$ has only quotient singularities.
			\item[(3)] There is a lc pair $K_{X'}+\Delta'$ and a generalized lc pair $({Z'},B_{Z'}+\M_{Z'})$ such that 
			\begin{itemize}
				\item $\M$ descends on $Z'$,
				\item $K_{X'}+\Delta'\sim_{\mathbb{Q}} f'^*(K_{Z'}+B_{Z'}+\M_{Z'})+{F'}$,
				\item $f'_*\mathcal{O}_{X'}(m{F'})\cong \mathcal{O}_{Z'}$, and
				\item $ {h_X}_*\mathcal{O}_{X'}(m(K_{X'}+\Delta'))\cong \mathcal{O}_X(m(K_X+\Delta))$ for all $m\geq 0$.  
			\end{itemize}
			
			\item[(4)] If $\mathrm{coeff}\Delta$ is in a DCC set, and the general fibre $(X_g,\Delta_g)$ of $f$ has a good minimal model, then $ \mathrm{coeff}B_{Z'}$ and $\mathrm{coeff}B_Z$ are in a DCC set, where $B_Z: =g_*B_{Z'}$.
		\end{enumerate}
		\begin{proof}
			By the weak semi-stable reduction Theorem of Abramovich and Karu (cf. \cite{AK00}), we can construct $X',{Z'}$ satisfying (1) and (2), such that $\M$ descends on $Z'$.
			
			For (3), let $\Delta': ={h_X} _* ^{-1}\Delta+E$, where $E$ is the exceptional divisor. Because $(X,\Delta)$ is klt, it is easy to see that ${h_X}_*\mathcal{O}_{X'}(m(K_{X'}+\Delta'))\cong \mathcal{O}_X(m(K_X+\Delta))$ for all $m\geq 0$, and $\kappa(X,K_X+\Delta)=\kappa(X',K_{X'}+\Delta')$. 
			
			If $\kappa(X,K_X+\Delta)<0$, choose $a\gg 0$ such that $a(K_{X'}+\Delta')$ is Cartier. Since ${Z'}$ is smooth and $f'$ is equidimensional, $f'_*\mathcal{O}_{X'}(a(K_{X'}+\Delta'))$ is a line bundle on $X'$, denote it by $\mathcal{O}_{Z'}(D)$. Choose a general sufficiently ample divisor $A'$ on ${Z'}$ such that $\mathcal{O}_{Z'}(A'+D)$ is big. Let $A: =\frac{1}{a}A'$, then $f'_*\mathcal{O}_{X'}(a(K_{X'}+\Delta'+f'^*A))$ is a big line bundle, and $\kappa(X',K_{X'}+\Delta'+f'^*A)=\mathrm{dim}({Z'})\geq 0$. Because $A'$ is general, $(X',\Delta'+f'^*A)$ is lc. It is easy to see that to prove (3) we may replace $(X',\Delta')$ by $(X',\Delta'+f'^*A)$ and assume $\kappa(X, K_X+\Delta)\geq 0$.
			
			Suppose $\kappa(X,K_X+\Delta)\geq 0$, choose $a\gg 0$ such that $H^0(X',\mathcal{O}_{X'}(a(K_{X'}+\Delta')))>0$, then we can choose $L\in |a(K_{X'}+\Delta')|$. Define
			\begin{equation*}
				G: =\mathrm{max}\{ N\ |\ N\mathrm{\ is\ an\ effective\ }\mathbb{Q}\text{-divisor such that }L\geq f'^* N \},
			\end{equation*}
			and
			\begin{equation*}
				D:=\frac{1}{a}G,\text{ and } {F'}:=\frac{1}{a}(L-f'^*G),
			\end{equation*}
			then we have $K_{X'}+\Delta'\sim_{\mathbb{Q}} f'^*D+{F'}$. 
			Because ${Z'}$ is smooth and $f'$ is equidimensional, $f'_*\mathcal{O}_{X'}(m{F'})$ is an invertible sheaf. Since $\Supp({F'})$ does not contain the whole fiber over any codimension 1 point on ${Z'}$, it is easy to see that $f'_*\mathcal{O}_{X'}(m{F'})\cong \mathcal{O}_{Z'}$ for all $m\geq 0$. 
			
			Let $X_\eta'$ be the generic fiber of $f'$, then $(K_{X'}+\Delta')|_{X_\eta'}={F'}|_{X_\eta'}$. Because $f'_*\mathcal{O}_{X'}(m{F'})\cong \mathcal{O}_{Z'}$, we have $H^0(X_\eta',\mathcal{O}_{X_\eta'}( \lceil (\Delta'-{F'})_{\leq 0}\rceil  ) )=1$, and $f':(X',\Delta'-{F'})\rightarrow Z'$ is an lc-trivial fibration.
			By the canonical bundle formula, there is a generalized pair $({Z'}, B_{Z'}+\M_{Z'})$ such that 
			$$K_{X'}+\Delta'-{F'}\sim_{\mathbb{Q}}f'^*(K_{Z'}+B_{Z'}+\M_{Z'}).$$ 
			Also because $f'_*\mathcal{O}_{X'}(m{F'})\cong \mathcal{O}_{Z'}$, there is an integer $l>0$ such that 
			$$H^0(X',\mathcal{O}_{X'}(ml(K_{X'}+\Delta')))\cong H^0({Z'},\mathcal{O}_{Z'}(ml(K_{Z'}+B_{Z'}+\M_{Z'})))$$
			for all $m\geq 0$.

			For (4), because $(X,\Delta)$ is a klt pair, by the main theorem of \cite{BCHM10}, $R(X/Z,K_X+\Delta)$ is finitely generated. Also because the general fiber $(X_g,\Delta_g)$ has a good minimal model, by Theorem \ref{HX13 Theorem 2.12}, $(X,\Delta)$ has a good minimal model over $Z$, we denote it by $(Y,\Delta_Y)$, and the morphism $f_Y:Y\rightarrow Z$. Because $\kappa(X_\eta,K_{X_\eta}+\Delta_{X_\eta})=0$, then $K_Y+\Delta_Y\sim_{\mathbb{Q},Z}0$.
			
			If $\mathrm{coeff}\Delta$ is in a DCC set, by the construction of $\Delta_Y$, $\mathrm{coeff}\Delta_Y$ is also in a DCC set. Let $B'$ be the divisor on $Z'$ such that $f'$ has slc fibers in codimension 1 over $Z'\setminus B'$. By construction, there is a codimension $\geq 2$ closed subset $W'\subset Z'$ such that $B_{Z'}$ is the smallest $\mathbb{Q}$-divisor supported on $B'$ such that $(X'\setminus f'^{-1}(W'),\Delta'-{F'}+f'^*(B'-B_{Z'}))$ is lc. 
			
			Denote the rational contraction $X'\dashrightarrow Y$ by $h_Y$, let $W:=h(W')$, because $K_{X'}+\Delta'-{F'}\sim_{\mathbb{Q},Z}0$, $\Delta_Y$ is the pushforward of $\Delta'$ on $Y$ and $K_Y+\Delta_Y\sim_{\mathbb{Q},Z}0$, then $B_{Z'}$ is also the smallest $\mathbb{Q}$-divisor supported on $B'$ such that $(Y\setminus f_Y^{-1}(W),\Delta_Y+{h_Y}_*f'^*(B'-B_{Z'}))$ is lc. Because $\mathrm{coeff}\Delta_Y$ is in a DCC set, then by the main result of \cite{HMX14}, $\mathrm{coeff}(B_{Z'})$ is in a DCC set.
		\end{proof}
	\end{thm}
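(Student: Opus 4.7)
The plan is to build the diagram in two stages: first pass to a birational model where the moduli $\mathbf{b}$-divisor actually descends and the map is equidimensional, then extract a canonical-bundle-formula decomposition by separating the horizontal and vertical parts of a pluricanonical section. For the first stage I would apply the weak semi-stable reduction of Abramovich--Karu to $f:X\to Z$ after first base-changing to (a resolution of) $W$, producing a diagram with $h$ factoring through $g_W$, $Z'$ smooth, $X'$ with only quotient singularities, $f'$ equidimensional, and with $\mathbf{M}$ descending on $Z'$; this handles items (1) and (2). I would then set $\Delta':=(h_X)_*^{-1}\Delta+\mathrm{Exc}(h_X)$, which keeps $(X',\Delta')$ lc and, since $(X,\Delta)$ is klt, preserves pluricanonical sections so that $(h_X)_*\mathcal{O}_{X'}(m(K_{X'}+\Delta'))\cong\mathcal{O}_X(m(K_X+\Delta))$ for all $m\ge 0$.

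For item (3) I would first reduce to the case $\kappa(X,K_X+\Delta)\ge 0$: if $\kappa$ is negative, pick $a\gg 0$ making $a(K_{X'}+\Delta')$ Cartier, push forward by $f'$ to get a line bundle $\mathcal{O}_{Z'}(D)$ on the smooth equidimensional base, and add a general sufficiently ample $f'^*A$ with $A:=\tfrac{1}{a}A'$ to force bigness of the pushforward while keeping $(X',\Delta'+f'^*A)$ lc; replace $\Delta'$ by $\Delta'+f'^*A$. Assuming now $\kappa\ge 0$, choose $L\in|a(K_{X'}+\Delta')|$ and define $G$ as the maximal effective $\mathbb{Q}$-divisor on $Z'$ with $L\ge f'^*G$, then set $D:=\tfrac{1}{a}G$ and $F':=\tfrac{1}{a}(L-f'^*G)$. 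By maximality, $F'$ does not contain any fiber of $f'$ over a codimension-one point of $Z'$, and equidimensionality of $f'$ over the smooth $Z'$ forces $f'_*\mathcal{O}_{X'}(mF')\cong\mathcal{O}_{Z'}$.

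Restricting to the generic fiber $X'_\eta$ we get $(K_{X'}+\Delta')|_{X'_\eta}=F'|_{X'_\eta}$ and $h^0(X'_\eta,\lceil(\Delta'-F')_{\le 0}\rceil)=1$, so $f':(X',\Delta'-F')\to Z'$ is an lc-trivial fibration in the sense of Theorem \ref{canonical bundle formula}. Applying that theorem yields the generalised lc pair $(Z',B_{Z'}+\mathbf{M}_{Z'})$ with $\mathbf{M}$ descending on $Z'$ (this descent is the payoff of the weak semistable reduction step) and $K_{X'}+\Delta'-F'\sim_{\mathbb{Q}} f'^*(K_{Z'}+B_{Z'}+\mathbf{M}_{Z'})$, giving all bullets of (3).

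For (4), combining $\kappa(X_\eta,K_{X_\eta}+\Delta|_{X_\eta})=0$ with the existence of a good minimal model on the general fibre lets me apply Theorem \ref{HX13 Theorem 2.12} (the canonical ring is finitely generated by \cite{BCHM10}) to get a good minimal model $(Y,\Delta_Y)$ of $(X,\Delta)$ over $Z$ satisfying $K_Y+\Delta_Y\sim_{\mathbb{Q},Z}0$. Because $\Delta_Y$ is obtained from $\Delta$ by a sequence of standard MMP steps, $\mathrm{coeff}\,\Delta_Y\subset\Ii$ (up to enlarging $\Ii$ in a controlled DCC fashion). Transporting the characterisation of $B_{Z'}$ from $(X',\Delta'-F')$ to $(Y,\Delta_Y)$ via the crepant relation (the relative triviality and equidimensionality match up the boundary divisors), I would then invoke the DCC statement of \cite{HMX14} to conclude that $\mathrm{coeff}\,B_{Z'}$, hence $\mathrm{coeff}\,B_Z$, lie in a DCC set. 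The main obstacle I anticipate is the bookkeeping in step two: verifying that the "peeling" of $L$ into $f'^*G+aF'$ behaves correctly when $\kappa<0$ and that after the $f'^*A$-trick the resulting $F'$ still satisfies $f'_*\mathcal{O}_{X'}(mF')\cong\mathcal{O}_{Z'}$; the rest is a careful application of already-established canonical bundle formula technology.
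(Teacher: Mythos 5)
Your proposal follows the paper's own argument essentially step for step: weak semistable reduction to get (1), (2) and the descent of $\mathbf{M}$, the same choice $\Delta'=(h_X)_*^{-1}\Delta+\mathrm{Exc}(h_X)$, the same reduction to $\kappa\geq 0$ via a general ample pullback, the same peeling of $L$ into $f'^*G+aF'$ to produce an lc-trivial fibration, and the same good-minimal-model plus \cite{HMX14} argument for (4). It is correct and takes essentially the same route as the paper.
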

	
	\section{DCC of Iitaka volumes}

	Suppose $X\dashrightarrow Z$ is the canonical model of $(X,\Delta)$. By Theorem \ref{canonical bundle formula on fibration}, there is a positive integer $l$, a nef $\mathbb{Q}$-Cartier $\mathbb{Q}$-divisor $\M_{Z'}$ and a $\mathbb{Q}$-divisor $B_{Z'}$ on a birational model ${Z'}$ of $Z$ such that 
	$$H^0(X,\mathcal{O}_X(ml(K_X+\Delta)))\cong H^0({Z'},\mathcal{O}_{Z'}(ml(K_{Z'}+B_{Z'}+\M_{Z'})))$$
	for all $m\geq 0$. Then the canonical model of $(X,\Delta)$ is isomorphic to the canonical model of $({Z'},B_{Z'}+\M_{Z'})$, and we have 
	$$\Ivol(K_X+\Delta)=\mathrm{vol}(Z,K_Z+B_Z+\M_Z)=\mathrm{vol}({Z'},K_{Z'}+B_{Z'}+\M_{Z'}),$$
	where $B_Z$ and $\M_Z$ are the pushforward of $B_{Z'}$ and $\M_{Z'}$ on $Z$. To study the Iitaka volume of $K_X+\Delta$, we only need to work on the birational model ${Z'}$ of $Z$.

	\begin{lemma}
		\label{boundedness of moduli part}
		Fix a positive integer $C$ and a finite set $\Ii\subset [0,1]\cap \mathbb{Q}$. Suppose there is a smooth projective morphism $\Cz\rightarrow T$ between schemes of finite type and a relative very ample divisor $\mathcal{A}$ on $\Cz$ over $T$. Let $\Cs$ be a set of generalized pairs, such that for every $(Z,B_Z+\M_Z)\in \Cs$, there is a closed point $t\in T$ such that
		\begin{itemize}
			\item there is a birational morphism $\phi:Z\rightarrow \Cz_t$, and
			\item $\phi_*\M_Z\sim_{\mathbb{Q}}D_1-D_2$ for two effective $\mathbb{Q}$-divisors $D_k$ with $\mathrm{coeff}(D_k)\subset \Ii$ and $\mathrm{deg}_{\mathcal{A}_t}(D_k)\leq C$, $k=1,2$.
		\end{itemize}
		Then there is a smooth projective morphism $\Cz'\rightarrow T'$ and finitely many $\mathbb{Q}$-divisors $\Cm_k$ on $\Cz'$ over $T'$ such that for any $(Z,B_Z+\M_Z)\in \Cs$, there is a closed point $t'\in T'$ and an isomorphism $\psi:\Cz_t\rightarrow \Cz'_{t'} $, such that $ \psi_*\phi_*\M_Z\sim_{\mathbb{Q}}\Cm_k|_{\Cz'_{t'}}$.
		\begin{proof}
			Since the coefficients of $D_k,k=1,2$ are in a finite set $\Ii$, there is a positive number $\delta$ such that $\mathrm{coeff}(D_k)>\delta$, which implies $\frac{1}{\delta}D_k\geq \lfloor D_k\rfloor,k=1,2$. Because $\mathrm{deg}_{\mathcal{A}_t}(D_k)\leq C$, then $\mathrm{deg}_{\mathcal{A}_t}(\Supp(D_k))\leq \frac{C}{\delta}$. By the boundedness of the Chow variety, there is a morphism $\Cz'\rightarrow T'$ and a divisor $\mathcal{D}$ on $\Cz'$, such that for every closed point $t\in T$, there is a closed point $t'\in T'$ and an isomorphism $\psi:\Cz_{t'}\rightarrow \Cz'_t$, such that $\Supp(\psi_*D_k)\subset \mathcal{D}|_{\Cz'_{t'}},k=1,2$. 
			
			Let $R$ be a component of $\mathcal{D}$, $S\rightarrow T'$ the normalisation of the Stein factorisation of $R\rightarrow T'$, such that $S\rightarrow T' $ is finite and $S$ is normal, and  
			$$\xymatrix{
				\Cz''  \ar[d] \ar[r]&    \Cz' \ar[d]\\
				S  \ar[r]	&   	 T',
			} $$
			the diagram. Because $S\rightarrow T'$ is finite, $S$ is irreducible and $\Cz''\rightarrow S$ is flat, $\Cz''$ is a quasi-projective variety and $\Cz''$ is normal by \cite[5.12.7]{Gro66}. Replacing $\Cz'\rightarrow T'$ by $\Cz''\rightarrow S$ finitely many times, we may assume that the fibers of $R\rightarrow T'$ are irreducible, for every component $R$ of $\mathcal{D}$. 
			
			Since, for every component $R$ of $\mathcal{D}$, the coefficients of $R$ in $D_1, D_2$ are in a finite set, then there are only finitely many possibilities for $D_1, D_2$ and $D_1-D_2$. Then there are only finitely many $\mathbb{Q}$-divisors $\Cm_k$ on $\Cz'$ over $T'$ such that $\psi_*\phi_*\M_Z\sim_{\mathbb{Q}}\Cm_k|_{\Cz_t}$ for some $k$. 
		\end{proof}
	\end{lemma}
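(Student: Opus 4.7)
The plan breaks into three steps: first use the finiteness of $\Ii$ to bound the degree of the reduced supports of $D_1,D_2$; then apply Chow-scheme boundedness to fit these supports into a single relative divisor on a parameter family; and finally refine the base so that each component of that relative divisor is irreducible on every fiber, at which point the finiteness of $\Ii$ forces only finitely many $\mathbb{Q}$-coefficient configurations.

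First I would set $\delta:=\min(\Ii\setminus\{0\})>0$; since every nonzero coefficient of $D_k$ is at least $\delta$, one has $\tfrac{1}{\delta}D_k\ge \Supp(D_k)$ on $\Cz_t$, so $\deg_{\mathcal{A}_t}\Supp(D_k)\le C/\delta$. As $\Cz\to T$ is of finite type with a fixed relative very ample divisor $\mathcal{A}$, the effective divisors in fibers of $\mathcal{A}_t$-degree at most $C/\delta$ are parameterised by a scheme of finite type (Chow variety). Base-changing $T$ along this parameter scheme yields a smooth projective morphism $\Cz'\to T'$, together with a relative divisor $\mathcal{D}\subset \Cz'$, such that for every $(Z,B_Z+\M_Z)\in\Cs$ there exist a closed point $t'\in T'$ and an isomorphism $\psi:\Cz_t\to \Cz'_{t'}$ with $\Supp(\psi_*D_k)\subset \mathcal{D}|_{\Cz'_{t'}}$ for $k=1,2$.

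Next, for each irreducible component $R$ of $\mathcal{D}$ I would replace $\Cz'\to T'$ by its base change along the normalisation $S\to T'$ of the Stein factorisation of $R\to T'$: this map is finite, $S$ is irreducible and normal, and by \cite[5.12.7]{Gro66} the base-changed total space remains normal; smoothness of $\Cz'\to T'$ is preserved by base change, so the new family is still smooth projective. Iterating this over the finitely many components of $\mathcal{D}$ a finite number of times arranges that every component of $\mathcal{D}$ restricts irreducibly to each fiber of $\Cz'\to T'$. Once this is achieved, the coefficients of $\psi_*D_1,\psi_*D_2$ along the irreducible components of $\mathcal{D}|_{\Cz'_{t'}}$ are drawn from the finite set $\Ii$, so there are only finitely many possible pairs $(\psi_*D_1,\psi_*D_2)$; choosing, for each such pair, the global $\mathbb{Q}$-divisor on $\Cz'$ obtained by putting the same coefficient differences on the corresponding components of $\mathcal{D}$, I would produce the required finite list $\{\Cm_k\}$ with $\psi_*\phi_*\M_Z\sim_{\mathbb{Q}}\Cm_k|_{\Cz'_{t'}}$.

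The main technical obstacle is verifying that the Stein-factorisation refinement actually terminates in finitely many steps: one has to argue that after each base change the multiset of numbers of geometric components of fibers of components of the new relative divisor strictly decreases in a well-ordered sense, so that the procedure halts before producing infinitely many cases. Once termination is in hand, the remaining bookkeeping---matching coefficients in $\Ii$ and extending them to $\mathbb{Q}$-divisors on the total space---is purely combinatorial and uses only the finiteness of $\Ii$ and of the component set of $\mathcal{D}$.
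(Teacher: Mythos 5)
Your proof follows essentially the same route as the paper: bound $\deg_{\mathcal{A}_t}\Supp(D_k)$ using $\delta=\min(\Ii\setminus\{0\})$, invoke Chow-variety boundedness to obtain $(\Cz',\mathcal{D})\to T'$, refine $T'$ by base-changing along the Stein factorisation of each component $R\to T'$ so that components of $\mathcal{D}$ have irreducible fibers, and conclude from the finiteness of $\Ii$ that only finitely many $\mathbb{Q}$-divisor configurations occur. The termination concern you flag is equally unaddressed in the paper (which likewise just says ``finitely many times''), and your use of $\Supp(D_k)$ in place of the paper's $\lfloor D_k\rfloor$ is actually the cleaner formulation.
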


	The next theorem says that if we bound the Iitaka volume of $(X,\Delta)\in \Cd(n,\Ii,l,r)$, then we can choose the snc model of $X\rightarrow \mathrm{Proj}R(X,K_X+\Delta)$ to be in a bounded family depending only on $n,\Ii,l$ and $r$. 
	
	\begin{thm}\label{boundedness of Ambro model}
		Fix a DCC set $\Ii\subset [0,1]\cap \mathbb{Q}$, positive integers $n,l,r,C>0$, and a positive number $\delta>0$. Define $\Cd'(n,\Ii,l,r,C,\delta)$ to be the set of $n$-dimensional log pairs $(X,\Delta)$ with the following properties:
		\begin{itemize}
			\item $(X,\Delta)\subset \Cd(n,\Ii,l,r)$. 
			\item $\Ivol(K_X+\Delta)\leq C$.	
			\item there is an effective ample $\mathbb{Q}$-divisor $H$ on $Z$ with $\mathrm{coeff}(H)>\delta$ such that $\Ivol(K_X+\Delta+f^*H)\leq r\Ivol(K_X+\Delta)$.
		\end{itemize}
		Then there is a log pair $(\Cz,\pp)$ with a projective morphism $\Cz\rightarrow T$, where $T$ is of finite type, and finitely many $\mathbb{Q}$-divisors $\Cm_k,k\in \Lambda$ on $\Cz$, such that for every $(X,\Delta) \in \Cd'(n,\Ii,l,r,C,\delta)$, there is a closed point $t\in T$, such that 
		\begin{itemize}
			\item $\Cz_t$ is birationally equivalent to the canonical model of $(X,\Delta)$,
			\item if $\M_{\Cz_t}:=M(X/\Cz_t,\Delta-F)$ is the moduli part, then $\M_{\Cz_t}\sim_{\mathbb{Q}}\Cm_{k}|_{\Cz_t}$ for some $k\in \Lambda$, 
			\item there is birational morphism $X'\rightarrow X$ and a $\mathbb{Q}$-divisor $F'$ on $X'$, such that the morphism $X'\rightarrow \Cz_t$ and $\Delta'-F',\pp_t$ satisfy the standard normal crossing assumption, where $\Delta'$ is the strict transform of $\Delta$ plus the exceptional divisor and $F'\in |K_{X'}+\Delta'|_{\mathbb{Q}/\Cz_t}$ is a section, in particular, $\M$ descends on $\Cz_t$, and
			\item if $(Z',B_{Z'}+\M_{Z'})$ is the generalised pair defined in Theorem \ref{canonical bundle formula on fibration} such that there is a birational morphism $\phi_t:Z'\rightarrow \Cz_t$, then $B:=\phi_{t*}B_{Z'}\leq \pp_t$.
		\end{itemize}  
		
		\begin{proof}
			Suppose $\kappa(X,K_X+\Delta)=d\leq n$, then $\mathrm{dim}(Z)=d$.
			Let $(Z',B_{Z'}+\M_{Z'})$ be a generalized pair given in Theorem \ref{canonical bundle formula on fibration} and denote the morphism $Z'\rightarrow Z$ by $h$. By (4) of Theorem \ref{canonical bundle formula on fibration}, $\mathrm{coeff}(B_{Z'})$ is in a DCC set $\Ii'$ depending only on $\Ii,d,n$. By assumption, $l\M_{Z'}$ is nef and Cartier. 
			
			By Theorem \ref{thm: gen eff bir}, there is an integer $r'$, such that $|r'(K_{Z'}+B_{Z'}+\M_{Z'})|$ defines a birational map. Perhaps replacing $Z'$ by a birational model, we may assume that it defines a birational morphism $\phi: Z'\rightarrow W$. Then there is a very ample divisor $A$ on $W$, such that $ r'(K_{Z'}+B_{Z'}+\M_{Z'})=\phi^*A+F_1$, where $F_1$ is an $\phi$-effective $\mathbb{Q}$-divisor. Because
			$$A^d \leq \mathrm{vol}(r'(K_{Z'}+B_{Z'}+\M_{Z'}))
			=r'^d\Ivol(K_X+\Delta)
			<r'^dC,$$
			then $W$ is in a bounded family. Then there exists a projective morphism $\Cw'\rightarrow T$ and a relative very ample divisor $\mathcal{A}'$ depending only on $n,\Ii,l,C$, such that there is a closed point $t\in T$ and an isomorphism $\chi:W\rightarrow  \Cw'_t$ such that $\chi^*\mathcal{A}'_t= A$. 
			Because $r'$ is fixed and the coefficients of $B_{Z'}$ are in a DCC set $\Ii'$, it is easy to see that the coefficient of $F_1$ is also in a DCC set $\tilde{\Ii}$. 
			
			Passing to a stratification of $T$ and a log resolution of the generic fiber of $\Cw'\rightarrow T$, we may assume that there is a birational morphism $\xi:\Cw\rightarrow \Cw'$, and $\Cw\rightarrow T$ is a smooth morphism. Let $\mathcal{A}$ be a very ample divisor on $\Cw$ over $T$, then there is an integer $r''$ such that $r''\xi^*\mathcal{A}'-\mathcal{A}$ is big over $T$. After increasing $r'$, replacing $Z'$ by a birational model and $(W,A)$ by $(\Cw_t,\mathcal{A}_t)$, we may assume $W$ is smooth and there is a very ample divisor $A$ on $W$ such that $A^d\leq \mathrm{vol}(r'(K_{Z'}+B_{Z'}+\M_{Z'}))$.

			Let $m$ be the Cartier index of $H$, define 
			\begin{equation}
				\begin{aligned}
					L':&=\frac{1}{r'}(\phi^*A+F_1)+(2d+1)\phi^* A+(2d+1)mh^*H\\
					&\sim_{\mathbb{Q}} K_{Z'}+B_{Z'}+\M_{Z'}+(2d+1)\phi^*A+(2d+1)mh^*H 
				\end{aligned}
			\end{equation}
			Because $H$ is an effective ample $\mathbb{Q}$-divisor on $Z$, by \cite{BCHM10}, the canonical model of $K_{Z'}+B_{Z'}+\M_{Z'}+(2d+1)\phi^*A+(2d+1)mh^*H$ exists, denote it by $h':Z'\dashrightarrow Z^!$, then $h'_*(K_{Z'}+B_{Z'}+\M_{Z'}+(2d+1)\phi^*A+(2d+1)mh^*H)\sim_{\mathbb{Q}}h'_*L'$ is ample, write $L^!:=h'_*L'$. Because $\phi^*A$ and $mh^*H$ are nef Cartier divisors, by \cite[Lemma 4.4]{BZ16}, both $\phi^*A$ and $h^*H$ are $h'$-trivial, so there are two birational morphisms ${\phi^!}:Z^!\rightarrow W$ and $h^!:Z^!\rightarrow Z$. 
			
			$$\xymatrix{
				& &Z' \ar[dll]_{h} \ar[drr]^{\phi}	\ar@{-->}[d]^{h'} & &  \\
				Z	&&  Z^! \ar[ll]_{h^!} \ar[rr]^{{\phi^!}}&& 	 W
			} $$
			Because $L^!$ is ample and effective, $W$ is smooth, by the negativity lemma, $L^!={\phi^!}^*{\phi^!}_*L^!-F_W$, where $F_W$ is effective and has the same support as $\mathrm{Exc}({\phi^!})$. Then we have $\Supp({\phi^!}^*{\phi^!}_*L^!)\supset \mathrm{Exc}({\phi^!})$, and $Z^!\setminus \Supp(L^!)\supseteq W\setminus \Supp({\phi^!}_* L^!)$. 
			
			Consider the following diagram\\
			$$\xymatrix{
				X  \ar[d] _{f}& &   X^! \ar[ll]_{g} \ar[d]^{f^!}\\
				Z	&  & 	 Z^!\ar[ll]^{h^!}
			} $$
			where $X^!$ is the normalization of the main component of $X\times_Z Z^!$. Because $\Supp(h^{!*}D)=h^{!-1}(\Supp(D))$
			and $X\rightarrow Z$ is smooth over $Z\setminus{\Supp(D)}$, $X^!\rightarrow Z^!$ is smooth over $Z^!\setminus \Supp(h^{!*}D)$. Because $Z^!$ is normal, then $f^{!-1}(Z^!\setminus \Supp(h^{!*}D))$ is normal and
			$$f^{!-1}(Z^!\setminus \Supp(h^{!*}D))\cong f^{-1}(Z\setminus \Supp(D)) \times_{Z\setminus \Supp(D)} Z^!\setminus \Supp(h^{!*}D).$$
			
			Let $K_{X^!}+\Delta^{!'}-F^{!'}:=g^*(K_X+\Delta-F)$, where $\Delta^{!'},F^{!'}$ are two effective $\mathbb{Q}$-divisors with no common component. Suppose $\Delta^{!'}=\Delta^{!''}+\Delta_1$, $F^{!'}=F^{!''}+F_1$, where $\Delta_1,F_1$ are $f^!$-vertical and not supported on $ f^{!-1}(\Supp(h^{!*}D))$, $\Delta^{!''}, F^{!''}$ are either $f$-horizontal or supported on $f^{!-1}(\Supp(h^{!*}D))$. Because $f^!$ is smooth over $ Z^!\setminus \Supp(h^{!*}D)$, then $\Delta_1|_{f^{!-1}(Z^!\setminus \Supp(h^{!*}D))}$ and $F_1|_{f^{!-1}(Z^!\setminus \Supp(h^{!*}D))}$ are the pullback of two divisors on $Z^!\setminus \Supp(h^{!*}D)$. It is easy to see that there is a $\mathbb{Q}$-divisor $R^!$ on $X^!$ such that $ \Supp(R^!)\subset f^{!-1}(\Supp(h^{!*}D))$ and $\Delta_1-F_1-R^!\sim_{\mathbb{Q},f^!}0$, let $\Delta^!-F^!= \Delta^{!''}-F^{!''}+R^!$, then $K_{X^!}+\Delta^!-F^!\sim_{\mathbb{Q},f^!} 0 $. 
			
			If $P$ is a component of $\Supp(\Delta^!-F^!)$, then it is either supported on $f^{!-1}(h^{!*}D)$ or $f^!$-horizontal, and $\Supp(\Delta^!-F^!)$ does not contain the whole fiber of any prime $g$-exceptional divisor that is not contained in $f^{!-1}(h^{!*}D) $. We have
			$$ \Supp(\Delta^!-F^!)|_{ Z^!\setminus \Supp(h^{!*}D)} = \Supp(\Delta-F) \times_{Z}  Z^!\setminus \Supp(h^{!*}D),$$ 
			and it is easy to see that $f^!:(X^!,\Supp(\Delta^!-F^!))\rightarrow       Z^!$ is log smooth over $Z^!\setminus \Supp(h^{!*}D)$.

			Because
			$$Z^! \setminus \Supp(h^{!*}D) 
			\supseteq Z^!\setminus\{\Supp(L^!+ h^{!*}D)\}
			\supseteq W\setminus\{\Supp({\phi^!}_*L^!+ {\phi^!}_*h^{!*}D)\}, $$
			$X^!\rightarrow W$ is isomorphic to $X^!\rightarrow Z^!$ over $W\setminus\{\Supp({\phi^!}_*L^!+ {\phi^!}_*h^{!*}D)\}$. Then $(X^!,\Supp(\Delta^!-F^!))\rightarrow W$ is log smooth over $W\setminus\{\Supp({\phi^!}_*L^!+ {\phi^!}_*h^{!*}D)\}$. 
			
			Next we prove that $(W,\Supp({\phi^!}_*L^!+B_W+ {\phi^!}_*h^{!*}D))$ is log bounded, where $B_W:=\phi_*B_{Z'}$. Because   
			$\Supp({\phi^!}_*L^!+B_W+{\phi^!}_*h^{!*}D)=\Supp(\phi_*(\phi^*A+F_1+B_{Z'}+h^*(D+H))),$
			we only need to prove that $(W,\Supp(\phi_*(\phi^*A+F_1+B_{Z'}+h^*(D+H))))$ is log bounded. Recall that the coefficients of $F_1$ and $B_{Z'}$ are in a DCC set and $\mathrm{coeff}(H)\geq \delta$ by assumption, then there is a positive number $\delta' < 1$, such that $\frac{1}{\delta'}(F_1+B_{Z'}) \geq \mathrm{red}(F_1+B_{Z'})$. By assumption, $A$ and $D$ are two effective integral divisors, and so we only need to prove that there exists a constant $C'>0$ such that 
			$$A^{d-1}.\phi_*(\mathrm{red}(\phi^*A+F_1+B_{Z'}+h^*D)+h^*H)<C'.$$	
			By the projection formula, this is equal to
			$$(\phi^*A)^{d-1}.(\mathrm{red}(\phi^*A+F_1+B_{Z'}+h^*D)+h^*H)<C'.$$

			Let $G=2((2d+1)+1)\phi^*A$, by \cite[Lemma 3.2]{HMX13}, we have
			\begin{equation}
				\begin{aligned}
					&G^{d-1}.(\mathrm{red}(\phi^*A+F_1+B_{Z'}+h^*D)) \\
					\leq &2^d\mathrm{vol}(K_{Z'}+\frac{1}{\delta'}B_{Z'}+\phi^*A+\frac{1}{\delta'}F_1+h^*D+G).
				\end{aligned}
			\end{equation}
			By Theorem \ref{pseudo eff threshold}, there is a positive number $e<1$ such that $K_{Z'}+eB_{Z'}+\M_{Z'}$ is big. Because $\M_{Z'}$ is pseudo-effective and 
			$$K_{Z'}+\frac{1}{\delta'}B_{Z'} +\frac{\frac{1}{\delta'}-1}{1-e}(K_{Z'}+eB_{Z'}+\M_{Z'})+\M_{Z'} \sim_{\mathbb{Q}} \frac{\frac{1}{\delta'}-e}{1-e}(K_{Z'}+B_{Z'}+\M_{Z'}),$$		
			then for any divisor $E$, we have that $\mathrm{vol}(Z',E+K_{Z'}+\frac{1}{\delta'}B_{Z'})\leq \mathrm{vol}(Z',E+ \frac{\frac{1}{\delta'}-e}{1-e}(K_{Z'}+B_{Z'}+\M_{Z'}))$. Because $\phi^*A$ and $F$ are effective, it is easy to see that $\phi^*A+\frac{1}{\delta'}F+G\leq (1+2((2d+1)+1)+\frac{1}{\delta'})(\phi^*A+F)\sim_{\mathbb{Q}} (1+2((2d+1)+1)+\frac{1}{\delta'})r'(K_{Z'}+B_{Z'}+\M_{Z'})$, then
			\begin{equation}
				\begin{aligned}
					&\mathrm{vol}(K_{Z'}+\frac{1}{\delta'}B_{Z'}+\phi^*A+\frac{1}{\delta'}F_1+h^*D+G)\\
					\leq &\mathrm{vol}(\frac{\frac{1}{\delta'}-e}{1-e}(K_{Z'}+B_{Z'}+\M_{Z'})+\phi^*A+\frac{1}{\delta'}F_1+h^*D+G)\\
					\leq &\mathrm{vol}((\frac{\frac{1}{\delta'}-e}{1-e}+r'+2r'((2d+1)+1)+\frac{r'}{\delta'})(K_{Z'}+B_{Z'}+\M_{Z'}) +h^*D)\\
					\leq & C''^d\mathrm{vol}(K_{Z'}+B_{Z'}+\M_{Z'}+h^*D)
				\end{aligned}
			\end{equation}
			where $C'':=(\frac{\frac{1}{\delta'}-e}{1-e}+r'+2r'((2d+1)+1)+\frac{r'}{\delta'})^d$.
			
			Recall that by construction, $H^0(X,m(K_X+\Delta +f^*D))\cong H^0(Z',m(K_{Z'}+B_{Z'}+\M_{Z'}+h^*D))$ 
			for all $m\gg 0$ sufficiently divisible, then we have 
			$$\mathrm{vol}(K_{Z'}+B_{Z'}+\M_{Z'}+h^*D)= \Ivol(K_X+\Delta+f^*D)\leq r\Ivol(K_X+\Delta)\leq rC,$$
			where the second inequality is from the definition of $\Cd(n,\Ii,l,r)$. Then we have
			$$G^{d-1}.(\mathrm{red}(\phi^*A+F_1+B_{Z'}+h^*D))\leq 2^drC''C.$$	
			Because $\phi^*A$ and $h^*H$ are nef, we have that
			\begin{equation}
				\begin{aligned}
					(\phi^*A)^{d-1}.h^*H&\leq (\phi^*A+h^*H)^d\leq r'^d \mathrm{vol}(K_{Z'}+B_{Z'}+\M_{Z'}+H)\\
					&\leq r'^d \Ivol(K_X+\Delta+f^*H)\leq r'^drC.
				\end{aligned}
			\end{equation}
			Let $C':=2^dC''C/(2(2d+1)+1)^d+r'^drC$, then $(\phi^*A)^{d-1}.(\mathrm{red}(\phi^*A+F_1+B_{Z'}+h^*D)+h^*H)<C'$. By the boundedness of the Chow varieties, $(W,\phi_*(\phi^*A+F_1+h^*D))$ is log bounded.
			
			By the definition of log boundedness, there is a flat morphism $(\Cz,\pp)\rightarrow T$ such that $(W,\phi_*(\phi^*A+F_1+h^*D))\cong (\Cz_t,\pp_t)$ for a closed point $t\in T$. Because $f^!:(X^!,\Supp(\Delta^!-F^!))\rightarrow W $ is log smooth over $W\setminus\Supp({\phi^!}_*L^!+ {\phi^!}_*h^{!*}D)$ and $ \Supp({\phi^!}_*L^!+ {\phi^!}_*h^{!*}D)=\Supp(\phi_*(\phi^*A+F_1+h^*D))$, then there is a rational contraction $f_t:X^!\dashrightarrow \Cz_t$, such that $(X^!,\Delta^!-F^!)$ is log smooth over $\Cz_t\setminus \pp_t$. After passing to a stratification of $T$ and log resolution of the generic fiber of $\Cz\rightarrow T$, we can assume $(\Cz_t,\pp_t)$ is log smooth for every closed point $t\in T$. Therefore, for any $(X,\Delta)\in \Cd'(n,\Ii,l,r,C,\delta)$, after replacing $X$ by a birational model, we may assume there is a morphism $X\rightarrow \Cz_t$ for a closed point $t\in T$, such that the morphism $X\rightarrow \Cz_t$ and divisors $\Delta-F,\pp_t$ satisfy the standard normal crossing assumptions. Hence the corresponding moduli part $\M$ descends on $\Cz_t$. Because $X\rightarrow Z$ has the same generic fiber as $f:X\rightarrow Z'$, $\M_{\Cz_t}$ is equal to $\phi_*\M_{Z'}$.  
			
			By Theorem \ref{pseudo eff threshold}, there is a rational number $e<1$ depending only on $\Ii,d,l$ such that both $K_{Z'}+B_{Z'}+\M_{Z'}$ and $K_{Z'}+B_{Z'}+e\M_{Z'}$ are big divisors. By Theorem \ref{thm: gen eff bir}, there is an integer $\tilde{r}$ depending only on $\Ii,d,l,e$, such that both $|m\tilde{r}(K_{Z'}+B_{Z'}+\M_{Z'})|$ and $|m\tilde{r}(K_{Z'}+B_{Z'}+e\M_{Z'})|$ define birational maps for all integers $m\geq 1$. By assumption, $l\M_{\Cz_t}$ is Cartier, we may choose $\tilde{r}=r'l$ for some integer $r'\gg 0$ such that both $\tilde{r}\M_{Z'}$ and $\tilde{r}e\M_{Z'}$ are Cartier divisors and both  
			$| \tilde{r}(K_{Z'}+\lfloor \tilde{r}B_{Z'}\rfloor / \tilde{r}+\M_{Z'})|$ and $| \tilde{r}(K_{Z'}+\lfloor \tilde{r}B_{Z'}\rfloor / \tilde{r}+e\M_{Z'})|$ define birational maps. Let $D_1''\in|\tilde{r}(K_{Z'}+\lfloor \tilde{r}B_{Z'}\rfloor /\tilde{r}+\M_{Z'})|,D_2''\in |\tilde{r}(K_{Z'}+\lfloor \tilde{r}B_{Z'}\rfloor /\tilde{r}+e\M_{Z'})|$ be general members. Define two effective $\mathbb{Q}$-divisors, 
			$$D_1'\sim_{\mathbb{Q}}\frac{K_{Z'}+\lfloor \tilde{r}B_{Z'}\rfloor /\tilde{r}+\M_{Z'}}{1-e},\ D_2'\sim_{\mathbb{Q}}\frac{K_{Z'}+\lfloor \tilde{r}B_{Z'}\rfloor /\tilde{r}+e\M_{Z'}}{1-e}.$$ 
			It is easy to see that the coefficients of $D'_1$ and $D'_2$ are in a discrete set that depends only on $r, \tilde{r},e, \Ii$. Let $ D_1=\phi_* D_1',D_2=\phi_*D_2'$, it is easy to see the degree of $D_1, D_2$ with respect to $A$ in $W$ is bounded. Because the coefficients of $D_1, D_2$ are in a finite set and $\M_{Z}=D_1-D_2$, then by Lemma \ref{boundedness of moduli part}, there are finitely many divisors $\Cm_k,k\in \Lambda$ on $\Cz$ such that $\M_{\Cz_t}=\Cm_k|_{\Cz_t}$ for some $k\in \Lambda$.
			
		\end{proof}
		
	\end{thm}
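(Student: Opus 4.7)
The plan is to descend the problem from $(X, \Delta)$ to the canonical-bundle-formula data on the base of the Iitaka fibration, prove log boundedness of an \emph{snc model} of this base, and then extract finitely many candidates for the moduli $\bQ$-divisor via Lemma \ref{boundedness of moduli part}. Let $d = \kappa(K_X + \Delta)$ and let $(Z', B_{Z'} + \M_{Z'})$ be the generalised pair produced by Theorem \ref{canonical bundle formula on fibration}; by hypothesis $l\M_{Z'}$ is nef and Cartier, and Theorem \ref{canonical bundle formula on fibration}(4) puts the coefficients of $B_{Z'}$ into a DCC set $\Ii'$ depending only on $\Ii$ and $n$. Applying Theorem \ref{thm: gen eff bir} I obtain a uniform integer $r'$ such that $|r'(K_{Z'} + B_{Z'} + \M_{Z'})|$ defines a birational map $\phi : Z' \to W$; the image carries a very ample divisor $A$ with $A^d \leq r'^d\, \Ivol(K_X + \Delta) \leq r'^d C$, so $W$ lies in a bounded family, and after stratification and log resolution I may assume $W$ is smooth and embedded in a relative very ample family over a base $T$.

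Next I would construct a common model over which the lc-trivial fibration is well-behaved. Setting $L' := \tfrac{1}{r'}(\phi^* A + F_1) + (2d+1)(\phi^* A + m\, h^* H)$, where $F_1$ is the fixed part of $|r'(K_{Z'} + B_{Z'} + \M_{Z'})|$ and $m$ is the Cartier index of $H$, the ample model $h' : Z' \dashrightarrow Z^!$ of $L'$ exists by \cite{BCHM10}. Because $\phi^* A$ and $h^* H$ are nef and Cartier, \cite[Lemma 4.4]{BZ16} makes them $h'$-trivial, so $Z^!$ dominates both $W$ and $Z$. Taking $X^!$ to be the normalisation of the main component of $X \times_Z Z^!$ and descending the lc-trivial divisor $\Delta - F$ to $\Delta^! - F^!$, I obtain $f^! : (X^!, \Supp(\Delta^! - F^!)) \to Z^!$ log smooth over $Z^! \setminus \Supp(h^{!*} D)$; the negativity lemma applied to the ample divisor $L^!$ then transports this log smoothness down to $W \setminus \Supp(\phi^!_* L^! + \phi^!_* h^{!*} D)$.

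The central step is to show $(W, \Supp(\phi^!_* L^! + B_W + \phi^!_* h^{!*} D))$ is log bounded, where $B_W := \phi_* B_{Z'}$; building $B_W$ into the support also ensures the final inequality $B \leq \pp_t$ demanded in the statement. By the projection formula this reduces to bounding the intersection
\[
(\phi^* A)^{d-1} . \big( \mathrm{red}(\phi^* A + F_1 + B_{Z'} + h^* D) + h^* H \big).
\]
Since $\mathrm{coeff}(B_{Z'}) \in \Ii'$ and $\mathrm{coeff}(H) > \delta$ are bounded below by some $\delta' > 0$, the reduced divisor is dominated by $\tfrac{1}{\delta'}(F_1 + B_{Z'})$, and \cite[Lemma 3.2]{HMX13} converts the intersection into a volume computation. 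Absorbing $\tfrac{1}{\delta'} B_{Z'}$ into a uniform multiple of $K_{Z'} + B_{Z'} + \M_{Z'}$ via the pseudo-effective threshold of Theorem \ref{pseudo eff threshold}, the hypothesis $\Ivol(K_X + \Delta + f^* D) \leq r C$ controls the first piece, while $(\phi^* A)^{d-1} . h^* H$ is controlled directly by $\Ivol(K_X + \Delta + f^* H) \leq r C$. I expect this intersection-number estimate, in particular the juggling of DCC coefficients against the finite-set requirement of Lemma \ref{boundedness of moduli part}, to be the main technical obstacle.

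With log boundedness in hand, the data assembles into a flat family $(\Cz, \pp) \to T$ which, after further stratification and log resolution, has each $(\Cz_t, \pp_t)$ log smooth. Then $X^! \to \Cz_t$ together with $\Delta^! - F^!, \pp_t$ satisfies the standard normal crossing assumption, the moduli $\mathbf{b}$-divisor $\M$ descends on $\Cz_t$, and $\M_{\Cz_t} = \phi_* \M_{Z'}$. To pin down the moduli $\bQ$-divisor up to linear equivalence, I would write $\M_{Z'}$ as the difference of two effective $\bQ$-divisors extracted from general members of $|\tilde r(K_{Z'} + \lfloor \tilde r B_{Z'}\rfloor/\tilde r + \M_{Z'})|$ and $|\tilde r(K_{Z'} + \lfloor \tilde r B_{Z'}\rfloor/\tilde r + e\M_{Z'})|$, where $e$ comes from Theorem \ref{pseudo eff threshold} and $\tilde r$ is a suitable multiple of $r' l$ supplied by Theorem \ref{thm: gen eff bir}. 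Their pushforwards to $W$ have coefficients in a finite set and bounded $A$-degree, so Lemma \ref{boundedness of moduli part} yields finitely many $\Cm_k$ on $\Cz$ with $\M_{\Cz_t} \sim_\bQ \Cm_k|_{\Cz_t}$, which is the final assertion of the theorem.
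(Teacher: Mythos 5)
Your proposal follows essentially the same route as the paper's proof: the same reduction via Theorem \ref{canonical bundle formula on fibration} and Theorem \ref{thm: gen eff bir} to a bounded base $W$, the same construction of $Z^!$, $X^!$ and the ample model of $L'$ to control the locus of log smoothness, the same intersection-number/volume estimate using \cite[Lemma 3.2]{HMX13} and Theorem \ref{pseudo eff threshold} to get log boundedness, and the same decomposition $\M_{Z'}\sim_{\mathbb{Q}}D_1-D_2$ feeding into Lemma \ref{boundedness of moduli part}. The sketch is correct in outline and matches the paper's argument step for step.
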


	\begin{thm}
		\label{boundedness of Ambro model means DCC}
		Suppose $( \Cz,\pp)$ is a log pair, $\Cz\rightarrow T$ is a smooth projective morphism and $\pp$ is log smooth over $T$, where $T$ is of finite type. Let $\Cm_k,k\in \Lambda$ be finitely many $\mathbb{Q}$-divisors. Fix an integer $l>0$ and a DCC set $\Ii\subset [0,1]\cap \mathbb{Q}$. For a closed point $t\in T$, let $\Cd'_t$ denotes the set of generalized log pairs $(W,B_W+\M_W)$ such that
		\begin{itemize}
			\item $(W,B_W)$ is log smooth, 
			\item there is a birational morphism $\phi:W\rightarrow \Cz_t$,
			\item $\M_W=\phi^*(\Cm_k|_{\Cz_t})$, for some $k\in \Lambda$, and
			\item $\mathrm{coeff}(B_W)\in \Ii$ and $\phi_* B_W \leq \pp|_{\Cz_t}$.
		\end{itemize}
		Then the set $\{\mathrm{vol}(K_W+B_W+\M_W)\ |\ (W,B_W+\M_W)\in \cup_{t\in T}\Cd'_t \} $ satisfies the DCC.
		\begin{proof}
			Because $\Lambda$ is a finite set, to prove the DCC, we may assume that $ \Lambda=\{1 \}$. 
			
			Fix a closed point $t\in T$, for any $(W,B_W+\M_W)\in\Cd_t$ such that $\M_W=\phi^*\Cm_1|_{\Cz_t}$, because $(\Cz_t,\pp_t)$ is log smooth, by the proof of \cite[Theorem 1.9]{HMX13}, we may assume that $\phi:W\rightarrow \Cz_t$ only blows up strata of $\pp_t$. On the other hand, because $(\Cz,\pp)$ is log smooth over $T$, and by the proof of Lemma \ref{boundedness of moduli part}, perhaps replacing $T$ by an \'etale cover, every stratum of $(\Cz,\pp)$ have irreducible fibers over $T$, we may find a sequence of blows up $g:\Cz'\rightarrow \Cz$ such that $W=\Cz'_t$. It is easy to see that there is a unique divisor $\mathcal{B}_W$ supported on the strict transform of $\pp$ and the exceptional locus of $g$, such that $B_W=\mathcal{B}_W|_{\Cz'_t}$. Let $Y=\Cz'_0$ be the fiber over $0$ of $\Cz'\rightarrow T$ and $B_Y:=\mathcal{B}_W|_{\Cz'_0}$. By Theorem \ref{Invariance of Plurigenra}, we have that $$\mathrm{vol}(K_W+B_W+\phi^*\Cm_1|_{\Cz_t})=\mathrm{vol}(K_Y+B_Y+(g^*\Cm_1)|_{\Cz'_0}).$$ 
			Then the set
			$$\{\mathrm{vol}(K_W+B_W+\M_W)\ |\ (W,B_W+\M_W)\in \Cd'_t\}$$
			is independent of $t\in T$. Now apply Theorem \ref{DCC for fixed base}.
		\end{proof}
	\end{thm}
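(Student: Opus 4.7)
The plan is to reduce the statement to an instance of Theorem \ref{DCC for fixed base}. Since $\Lambda$ is a finite index set, I may restrict attention to a single $\Cm := \Cm_k$ and prove DCC separately for each $k$. The heart of the argument is to show that, after a suitable birational modification, the triples $(W,B_W,\M_W)$ appearing in $\bigcup_{t\in T}\Cd'_t$ can be produced as fibres of a single log smooth family over $T$, so that the volume becomes independent of $t$.

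First I would normalise the birational model $\phi:W\to\Cz_t$. Since $(\Cz_t,\pp_t)$ is log smooth and $\phi_*B_W\le \pp_t$, I can arrange, by the strata-blowup argument used in the proof of \cite[Theorem 1.9]{HMX13}, that $\phi$ factors through a sequence of blowups along strata of $\pp_t$. Passing to a common log resolution and adjoining exceptional divisors with the appropriate coefficients does not alter $\mathrm{vol}(K_W+B_W+\M_W)$, because $\M_W$ is the pullback of a fixed divisor from $\Cz_t$ and $B_W$ is modified only by adding exceptional components with suitable log-discrepancy-compatible coefficients.

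Next I would spread the modification over $T$. By Noetherian induction and a stratification of $T$ (passing to an \'etale cover if necessary), I may assume every stratum of $(\Cz,\pp)$ has irreducible fibres over $T$. Then a sequence of blowups of strata of $\pp_t$ lifts to a relative sequence of blowups $g:\Cz'\to\Cz$ of the corresponding relative strata of $\pp$, producing a log smooth family $(\Cz',\pp')\to T$ whose fibre over $t$ is precisely the modified $(W,\mathrm{Supp}(B_W))$. Extending $B_W$ to a divisor $\mathcal{B}$ on $\Cz'$ supported on the strict transform of $\pp$ plus the exceptional locus of $g$, and noting that $g^*\Cm$ restricts to $\M_W$ on the fibre over $t$ (and that this restriction is nef since $\M$ descends on $\Cz_t$), I can invoke Theorem \ref{Invariance of Plurigenra} to conclude
\[
\mathrm{vol}(K_W+B_W+\M_W)=\mathrm{vol}\bigl(K_{\Cz'_{t_0}}+\mathcal{B}|_{\Cz'_{t_0}}+(g^*\Cm)|_{\Cz'_{t_0}}\bigr)
\]
for any closed point $t_0\in T$.

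Once the volume is known to be constant in $t$, fixing such a $t_0$ reduces the problem to a fixed base $(\Cz_{t_0},\pp_{t_0})$ with a fixed $\mathbb{Q}$-divisor $\Cm|_{\Cz_{t_0}}$, and Theorem \ref{DCC for fixed base} yields the desired DCC. I expect the main technical obstacle to be the spreading step: ensuring that (i) the strata of $(\Cz,\pp)$ can be made to have irreducible fibres after a single \'etale base change, and (ii) that the extension $\mathcal{B}$ of $B_W$ can be chosen so that it simultaneously witnesses the $\Ii$-coefficient condition and is log smooth over $T$. Both should follow from log smoothness of $(\Cz,\pp)\to T$ together with standard generic flatness and stratification arguments, but some care is needed to keep the inequality $\phi_*B_W\le\pp_t$ in force uniformly in $t$.
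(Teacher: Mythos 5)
Your proposal follows essentially the same route as the paper: reduce to a single $\Cm_k$, normalise $\phi$ to a sequence of strata blowups via the argument of \cite[Theorem 1.9]{HMX13}, spread that blowup sequence over $T$ after an \'etale cover making the strata of $(\Cz,\pp)$ have irreducible fibres, invoke Theorem \ref{Invariance of Plurigenra} to see the volume is independent of $t$, and finish with Theorem \ref{DCC for fixed base}. The only cosmetic difference is your extra mention of passing to a common log resolution, which is subsumed by the strata-blowup normalisation and is not needed as a separate step; you also correctly flag the two technical points (irreducibility of fibres of strata, existence of the extension $\mathcal{B}$) that the paper handles by citing its Lemma \ref{boundedness of moduli part} and log smoothness over $T$.
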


	\begin{proof}[Proof of Theorem \ref{key theorem}]
		Fix an arbitrary constant $C>0$, let 
		$$\Cd(n,\Ii,l,r,C^-):=\{(X,\Delta)\in \Cd(n,\Ii,l,r),\Ivol(K_X+\Delta)\leq C \},$$
		we only need to prove $\{\Ivol (K_X+\Delta)\ |\ (X,\Delta)\in \Cd(n,\Ii,l,r,C^-) \}$ is a DCC set. 
		
		Fix $(X,\Delta)\in \Cd(n,\Ii,l,r,C^-)$, because $Z$ is the canonical model of $K_X+\Delta$, by Theorem \ref{canonical bundle formula on fibration}, there is a generalised pair $(Z',B_{Z'}+\M_{Z'})$ and a birational morphism $ h:Z'\rightarrow Z$ such that $Z$ is the canonical model of $K_{Z'}+B_{Z'}+\M_{Z'}$. Let $B_Z$ be the pushforward of $B_{Z'}$, then $K_Z+B_Z+\M_Z$ is ample. 
		
		By Theorem \ref{thm: gen eff bir}, there is an integer $r'>0$ which only depends on $\Ii,l$ such that $|r'(K_{Z'}+B_{Z'}+\M_{Z'})|$ defines a birational map. Choose a general member $H'\in |r'(K_{Z'}+B_{Z'}+\M_{Z'})|$, let $H:=h_*H' $, then $H$ is ample and the coefficients of $H$ are bounded below by a positive number $\delta'$. Because by definition of the canonical model, $h^*H\leq H'$, by Theorem \ref{canonical bundle formula on fibration}(3) ,
		$$H^0(X,\mathcal{O}_X(ml(K_X+\Delta)))\cong H^0({Z'},\mathcal{O}_{Z'}(ml(K_{Z'}+B_{Z'}+\M_{Z'}))),$$ 
		and we have that
		\begin{equation}
			\Ivol(K_X+\Delta+f^*H)\leq (1+r)^d\Ivol(K_X+\Delta),
		\end{equation}
		then $(X,\Delta), H$ satisfies the conditions in Theorem \ref{boundedness of Ambro model}. Let $(\Cz,\pp)\rightarrow T$ be the bounded family and $\Cm_k,k\in\Lambda$ be the $\mathbb{Q}$-divisors defined in Theorem \ref{boundedness of Ambro model}, $\Cd'$ the set of generalised klt pairs $({W'},B_{W'}+\M_{W'})$ such that
		\begin{itemize}
			\item $({W'},B_{W'})$ is log smooth,
			\item There is a morphism $\phi:{W'}\rightarrow \Cz_t$ for a closed point $t\in T$,
			\item $\M_{W'}=\phi^*(\Cm_k|_{\Cz_t})$, for some $k\in\Lambda$, and
			\item $\mathrm{coeff}(B_{W'})$ is in a fixed DCC set and $\phi_*(B_{W'})\leq \pp_t$. 
		\end{itemize}
		Since $Z$ is the canonical model of $(X,\Delta)$, $\Ivol(X,K_X+\Delta)=\mathrm{vol}(Z,K_Z+B_Z+\M_Z)$. Let $({Z'}, B_{Z'}+\M_{Z'})$ be a generalized pair defined by Theorem \ref{canonical bundle formula on fibration} such that there is a birational morphism $\psi_t: {Z'}\rightarrow \Cz_t$ for a closed point $t\in T$. By Theorem \ref{canonical bundle formula on fibration}(3), $\mathrm{vol}(Z,K_Z+B_Z+\M_Z)=\mathrm{vol}({Z'},K_{Z'}+B_{Z'}+\M_{Z'})$, and by Theorem \ref{boundedness of Ambro model}, $\psi_{t*}B_{Z'}\leq \pp_t$ and $ \M_{Z'}=\psi_t ^*(\Cm_k)$, then $({Z'},B_{Z'}+\M_{Z'})\in \Cd'$, and
		$$\{\Ivol(K_X+\Delta)\ |\ (X,\Delta)\in \Cd(n,\Ii,l,r,C^-) \}\subset \{\mathrm{vol}(K_{Z'}+B_{Z'}+\M_{Z'})\ |\ ({Z'},B_{Z'}+\M_{Z'})\in \Cd' \}.$$ 
		By Theorem \ref{boundedness of Ambro model means DCC}, the later one is a DCC set, then $\{\Ivol(K_X+\Delta)\ |\ (X,\Delta)\in \Cd(n,\Ii,l,r,C^-) \}$ is a DCC set.
	\end{proof}

	\section{Boundedness of canonical models}
	In this section, we follow the method of \cite[Chapter 7]{HMX18}.
	\begin{defn}
		Let $(Z,B)$ be a log pair. Define a $\bm{b}$-divisor $\mathbf{M}_B$ by assigning to any valuation $\mu$,
		\begin{equation}
			\mathbf{M}_B(\mu)= \begin{cases}
				\mathrm{mult}_\Gamma(B), &\text{if the centre of }\mu\text{ is a divisor }\Gamma \text{ on }Z,\\
				1, &\text{otherwise.} 
			\end{cases}
		\end{equation}
	\end{defn}
	
	\begin{thm}\label{achieve volume}
		Fix a positive rational number $w$, a natural number $n$ and a DCC set $\Ii\subset [0,1]\cap \mathbb{Q}$. Let $(W, D+M)$ be a projective log smooth $n$-dimensional pair where $D$ is reduced, $M$ is nef, and $\mathbf{M}_D$ the $\bm{b}$-divisor defined as above. Then there exists $f:W'\rightarrow W$, a finite sequence of blow-up along strata of $\mathbf{M}_D$, such that if 
		\begin{itemize}
			\item $(Z,B)$ is a projective log smooth $n$-dimensional pair,
			\item $g:Z\rightarrow W$ is a finite sequence of blow-up along strata of $\mathbf{M}_D$,
			\item $\mathrm{coeff}(B)\subset \Ii$,
			\item $g_*B\leq D$, and
			\item $\mathrm{vol}(Z,K_Z+B+g^*M)=w$,
		\end{itemize}
		then $\mathrm{vol}(W',K_{W'}+\mathbf{M}_{B,W'}+f^*M)=w$, where $\mathbf{M}_{B,W'}$ is the strict transform of $B$ plus the $W'\dashrightarrow Z$ exceptional divisors.
		\begin{proof}
			We may assume that $1\in \Ii$. Let 
			$$V=\{\mathrm{vol}(Y,K_Y+\Gamma+g^*M)\ |\ (Y,\Gamma)\in \Cs \},$$
			where $\Cs$ is the set of all $n$-dimensional projective log smooth pairs $(Y,\Gamma)$ such that
			\begin{itemize}
				\item $K_Y+\Gamma+g^*M$ is big,
				\item $\mathrm{coeff}\Gamma\subset \Ii$, and
				\item there is a birational morphism $g:Y\rightarrow W$ and $g_*\Gamma \leq D$.
			\end{itemize}
			By Theorem \ref{DCC for fixed base}, $V$ satisfies the DCC condition, then there is a constant $\delta>0$ such that if $w\leq \mathrm{vol}(K_Y+\Gamma+g^*M)\leq w+\delta$, then $\mathrm{vol}(K_Y+\Gamma+g^*M)=w$. By Theorem \ref{pseudo eff threshold}, there exists $0<e<1$ such that if $(Y,\Gamma)\in \Cs$, then $K_Y+e\Gamma +g^*M$ is big. Fix $\epsilon >0$ such that 
			$$(1-\epsilon)^n>\frac{w}{w+\delta},\ \ \mathrm{and\ let\ } a=1-(1-e)\epsilon.$$
			Since
			$$K_Y+a\Gamma+g^*M =(1-\epsilon)(K_Y+\Gamma+g^*M)+\epsilon (K_Y+e\Gamma+g^*M),$$
			it follows that
			\begin{equation}
				\begin{aligned}
					\mathrm{vol}(K_Y+a\Gamma+g^*M) &\geq \mathrm{vol}((1-\epsilon)(K_Y+\Gamma+g^*M)\\
					&=(1-\epsilon)^n\mathrm{vol}(K_Y+\Gamma +g^*M)\\
					&\geq \frac{w}{w+\delta}\mathrm{vol}(K_Y+\Gamma +g^*M)
				\end{aligned}
			\end{equation}
			
			Since $(W,aD)$ is klt, there is a sequence of blow up $f:W'\rightarrow W$ of the strata of $\mathbf{M}_D$ with the following property: if $K_{W'}+\Phi=f^*(K_W+aD)+E$, where $\Phi \wedge E=0$, then $(W',\Phi)$ is terminal. Let $\Cs'$ be the set of pairs $(Z, B)$ satisfying the conditions given in this theorem, after blowing up $Z$ along finitely many strata of $\mathbf{M}_D$ and replacing $B$ by the image of $\mathbf{M}_{B}$, we may assume that $\phi:Z\rightarrow W'$ is a morphism. If $(Z,B)\in \Cs$ and $B_{W'}=\phi_*B$, then $af_*B_{W'}\leq aD$ so that if
			$$K_{W'}+\Phi=f^*(K_{W}+af_*B_{W'})+F$$
			where $\Phi \wedge F=0$, then $(W',\Phi)$ is terminal. We have
			\begin{equation}
				\begin{aligned}
					\mathrm{vol}(W',K_{W'}+aB_{W'}+f^*M) &= \mathrm{vol}(W',K_{W'}+aB_{W'}\wedge \Phi +f^*M)\\
					&= \mathrm{vol}(Z,K_Z+\phi_*^{-1}(aB_{W'}\wedge \Phi) +g^*M)\\
					&\leq \mathrm{vol}(Z,K_Z+B+g^*M),
				\end{aligned}
			\end{equation}
			and then,
			$$w\leq \mathrm{vol}(W',K_{W'}+B_{W'}+f^*M)\leq \frac{w+\delta}{w}\mathrm{vol}(W',K_{W'}+aB_{W'}+f^*M)\leq w+\delta,$$
			which implies $\mathrm{vol}(W',K_{W'}+B_{W'}+f^*M)=w$ as required.
		\end{proof}
	\end{thm}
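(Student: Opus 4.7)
The plan is to emulate the terminalization argument of \cite[Chapter~7]{HMX18} in the presence of a nef divisor pulled back from the fixed base. First, I would apply Theorem \ref{DCC for fixed base} to the set of log smooth pairs $(Y,\Gamma)$ admitting a birational morphism $g\colon Y\to W$ with $g_*\Gamma\le D$ and $\mathrm{coeff}(\Gamma)\subset\Ii$; this produces a gap constant $\delta>0$ such that no volume $\mathrm{vol}(K_Y+\Gamma+g^*M)$ lies strictly between $w$ and $w+\delta$. Next, Theorem \ref{pseudo eff threshold} supplies an $e\in(0,1)$ with $K_Y+e\Gamma+g^*M$ big for every such $(Y,\Gamma)$. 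Choose $\epsilon>0$ with $(1-\epsilon)^n>w/(w+\delta)$ and set $a:=1-(1-e)\epsilon$. The convex combination
\[
K_Y+a\Gamma+g^*M=(1-\epsilon)(K_Y+\Gamma+g^*M)+\epsilon\bigl(K_Y+e\Gamma+g^*M\bigr)
\]
together with homogeneity and superadditivity of volume on the big cone yields
\[
\mathrm{vol}(K_Y+a\Gamma+g^*M)\ \ge\ \frac{w}{w+\delta}\,\mathrm{vol}(K_Y+\Gamma+g^*M).
\]

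Second, I would construct $f\colon W'\to W$. Since $(W,aD)$ is klt (as $a<1$ and $(W,D)$ is log smooth with $D$ reduced), a standard finite sequence of blow-ups along strata of $\mathbf{M}_D$ produces a model $W'$ such that, writing $K_{W'}+\Phi=f^*(K_W+aD)+E$ with $\Phi\wedge E=0$, the pair $(W',\Phi)$ is terminal. This $f$ depends only on $W,D,a$, hence only on the ambient data $W,D,\Ii,M,w,n$, not on any individual $(Z,B)$.

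Third, given $(Z,B)$ satisfying the hypotheses, further blow-ups of $Z$ along strata of $\mathbf{M}_D$ (and replacement of $B$ by its $\mathbf{M}_B$-extension, which leaves $\mathrm{vol}(K_Z+B+g^*M)$ unchanged) allow me to assume a factorization $\phi\colon Z\to W'$. Put $B_{W'}:=\phi_*B$; then $af_*B_{W'}\le aD$, and the terminalization property transfers to the pair adapted to $af_*B_{W'}$. I would then chain the identities
\begin{align*}
\mathrm{vol}\bigl(W',K_{W'}+aB_{W'}+f^*M\bigr)
&=\mathrm{vol}\bigl(W',K_{W'}+aB_{W'}\wedge\Phi'+f^*M\bigr)\\
&=\mathrm{vol}\bigl(Z,K_Z+\phi_*^{-1}(aB_{W'}\wedge\Phi')+g^*M\bigr)\\
&\le \mathrm{vol}\bigl(Z,K_Z+B+g^*M\bigr)=w,
\end{align*}
where $\Phi'$ is defined by $K_{W'}+\Phi'=f^*(K_W+af_*B_{W'})+F$ with $\Phi'\wedge F=0$. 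Combining with the first-step estimate applied to $(W',B_{W'})$ gives
\[
w\ \le\ \mathrm{vol}\bigl(W',K_{W'}+B_{W'}+f^*M\bigr)\ \le\ w+\delta,
\]
and the DCC gap forces this volume to equal $w$.

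The main obstacle is ensuring that $f\colon W'\to W$ can be chosen \emph{uniformly}: the single terminalization of $(W,aD)$ must simultaneously serve as a terminalization for every $(W,af_*B_{W'})$ with $f_*B_{W'}\le D$. This is exactly the reason for restricting to blow-ups along strata of $\mathbf{M}_D$, so that the terminal locus is controlled by $D$ alone. A secondary point to justify carefully is the middle equality in the displayed chain above, which expresses that exceptional divisors of $\phi$ contribute nothing to the volume once one intersects $aB_{W'}$ with $\Phi'$, a consequence of terminality of $(W',\Phi')$.
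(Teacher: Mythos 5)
Your proposal follows the paper's own argument essentially verbatim: the same DCC gap $\delta$ from Theorem \ref{DCC for fixed base}, the same threshold $e$ from Theorem \ref{pseudo eff threshold}, the same choice $a=1-(1-e)\epsilon$, the same terminalization of $(W,aD)$ along strata of $\mathbf{M}_D$, and the same volume chain forcing the value into the gap $[w,w+\delta]$. It is correct and there is no substantive difference from the paper's proof.
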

	
	\begin{proof}[Proof of Theorem \ref{key theorem 2}]
		Define $\Cd(n,\Ii,l,r,C)$ to be 
		$$\{(X,\Delta)\ |\ (X,\Delta)\in \Cd(n,\Ii,l,r) \text{\ and\ }\ \Ivol(K_X+\Delta)=C \}.$$ 
		It suffices to show that there is an integer $N>0$ such that if $(Z, B_Z+\M_Z)$ is the canonical model of $(X,\Delta)\in \Cd(n,\Ii,l,r, C)$, then $N(K_Z+B_Z+\M_Z)$ is very ample. 
		Suppose this is not the case, let $\{(X_i,\Delta_i)\in \Cd(n,\Ii,l,r,C),i\in \mathbb{N}\}$ be a sequence, $(Z_i,B^i_{Z_i}+\M^i_{Z_i})$ the corresponding canonical model such that $i!(K_{Z_i}+B^i_{Z_i}+\M^i_{Z_i})$ is not very ample for all $i\in \mathbb{N}$.
		
		Let $({Z'_i},B^i_{{Z'_i}}+\M^i_{{Z'_i}}),i\in \mathbb{N}$ be the generalised pair defined in Theorem \ref{canonical bundle formula on fibration}(3), then $(Z_i,B^i_{Z_i}+\M^i_{Z_i})$ is the canonical model of $({Z'_i},B^i_{{Z'_i}}+\M^i_{{Z'_i}})$. By Theorem \ref{boundedness of Ambro model}, after passing to a stratification of $T$, we may assume that there is a log bounded log smooth family $(\Cz,\pp)\rightarrow T$ and finitely many $\mathbb{Q}$-divisors $\Cm_k,k\in \Lambda$ on $\Cz$, such that for every ${Z'_i}$, there is a closed point $t_i\in T$ and a birational morphism $\phi_i: {Z'_i}\rightarrow \Cz_{t_i}$, such that $\phi_{i*}B^i_{{Z'_i}}\leq \pp_{t_i}$, and passing to a subsequence, we may assume $\Lambda=\{1\}$, and then $\M^i_{{Z'_i}}=\phi_i ^* \Cm_1$.
		
		Applying Theorem \ref{achieve volume} to $(\Cz_{t_1},\pp_{t_1}+\Cm_1|_{\Cz_{t_1}})$, we obtain a model $\Cz'_{t_1}\rightarrow \Cz_{t_1}$ and $g:(\Cz',\pp':=\mathbf{M}_{\pp,\Cz'})\rightarrow \Cz$ the morphism obtained by blowing up the corresponding strata of $\mathbf{M}_\pp$.
		Define $\Phi_{t_i}=(f'_i)^{-1}_*B^i_{{Z'_i}}+\mathrm{Exc}(f'_i)\leq \pp'_{t_i}$, where $f'_i:\Cz'_{t_i}\dashrightarrow {Z'_i}$ is the induced birational map. Passing to a subsequence, we may also assume that for any irreducible component $P$ of the support of $\pp'$, the coefficients of $\Phi_{t_i}$ along $P_{t_i}$ are non-decreasing. Let $\Phi^i$ be the $\mathbb{Q}$-divisor supported on $\pp'$ such that $\Phi^i|_{\Cz'_{t_i}}=\Phi_{t_i}$, then the coefficients of $\Phi^i$ are non-decreasing.
		
		We claim that for any $i>0$, we have
		$$\mathrm{vol}(\Cz'_{t_i},K_{\Cz'_{t_i}}+\Phi_{t_i} +g^*\Cm_1|_{\Cz'_{t_i}})=C.$$
		To prove this, we may fix $i$. Applying Theorem \ref{achieve volume} to       $(\Cz_{t_i},\pp_{t_i}+\Cm_1|_{\Cz_{t_i}})$, we obtain a model               $\Cz''_{t_i}\rightarrow \Cz_{t_i}$ and $g:(\Cz'',\pp'':=\mathbf{M}_{\pp,\Cz''})\rightarrow \Cz$ the corresponding morphism obtained by blowing-up the corresponding strata of $\mathbf{M}_\pp$. By Theorem \ref{achieve volume} we have
		$$\mathrm{vol}(\Cz''_{t_i},K_{\Cz''_{t_i}}+\Psi_{t_i} +g'^*g^*\Cm_1|_{\Cz''_{t_i}})=C,$$
		where $\Psi_{t_i}:=\mathbf{M}_{B^i_{{Z'_i}},\Cz''_{t_i}}$. If $\Psi$ is the divisor supported on $\Supp(\mathbf{M}_{\pp',\Cz''})$ such that $\Psi|_{\Cz''_{t_i}}=\Psi_{t_i}$, then
		\begin{equation}
			\begin{aligned}
				C 	&=\mathrm{vol}(\Cz''_{t_i},K_{\Cz''_{t_i}}+\Psi_{t_i} +g'^*g^*\Cm_1|_{\Cz''_{t_i}})\\
				&=\mathrm{vol}(\Cz''_{t_1},K_{\Cz''_{t_1}}+\Psi|_{\Cz''_{t_1}} +g'^*g^*\Cm_1|_{\Cz''_{t_1}})\\
				&=\mathrm{vol}(\Cz'_{t_1},K_{\Cz'_{t_1}}+\Phi^i|_{\Cz'_{t_1}}+g^*\Cm_1|_{\Cz'_{t_1}})\\
				&=\mathrm{vol}(\Cz'_{t_i},K_{\Cz'_{t_i}}+\Phi_{t_i}+g^*\Cm_1|_{\Cz'_{t_i}}),
			\end{aligned}
		\end{equation}
		where the second and the fourth equalities follow from Theorem \ref{Invariance of Plurigenra} and the third one follows from Theorem \ref{achieve volume}.

		Because $(\Cz'_{t_1},\pp'_{t_1})$ is log smooth, for any common resolution $Y$ of $Z'_1$ and $\Cz'_{t_1}$, we have $\mathbf{M}_{\Phi_{t_1},Y}\geq \mathbf{M}_{B^1_{Z_1},Y}$. Also Because $ \mathrm{vol}(Z'_1,K_{Z'_1}+B'_{Z'_1}+\M'_{Z_1})=\mathrm{vol}(\Cz'_{t_1},K_{\Cz'_{t_1}}+\Phi_{t_1} +g^*\Cm_1|_{\Cz'_{t_1}})$, by Lemma \ref{log canonical model}, $(Z'_1,B'_{Z'_1}+\M'_{Z_1})$ has the same canonical model with $(\Cz'_{t_1},\Phi_{t_1} +g^*\Cm_1|_{\Cz'_{t_1}})$, which is $(Z_1,B^1_{Z_1}+\M^1_{Z_1})$, then there is a birational contraction $\Cz'_{t_1}\dashrightarrow Z_1$.
		
		Since $(Z_1, B^1_{Z_1}+\M^1_{Z_1})$ is generalized klt and the contraction $\Cz'_{t_1}\dashrightarrow Z_1$ is $(K_{\Cz'_{t_1}}+\Phi_{t_1} +g^*\Cm_1|_{\Cz'_{t_i}})$-non-negative, we may slightly increase the coefficients of the corresponding exceptional divisor in $\Phi_{t_1}$ to make the contraction $\Cz'_{t_1}\dashrightarrow Z_1$ to be $(K_{\Cz'_{t_1}}+\Phi_{t_1} +g^*\Cm_1|_{\Cz'_{t_i}})$-positive. Let $\mathcal{A}$ be an ample divisor on $\Cz'$, then we can choose $\epsilon \ll 1$ such that $\Cz'_{t_1}\dashrightarrow Z_1$ to be $(K_{\Cz'_{t_1}}+\Phi_{t_1} +(g^*\Cm_1+\epsilon \mathcal{A})|_{\Cz'_{t_i}})$-positive, also because $\epsilon$ is small enough,
		$Z_1$ is the canonical model of $(\Cz'_{t_1},\Phi_{t_1}+(g^*\Cm_1+\epsilon \mathcal{A})|_{\Cz'_{t_1}})$.
		
		Because $(g^*\Cm_1+\epsilon \mathcal{A})|_{\Cz'_{t_1}}$ is ample, there is an open subset $U\subset T$ such that $g^*\Cm_1+\epsilon \mathcal{A}$ is ample over $U$. To prove the boundedness we can replace $T$ with a stratification, so we can assume that $U=T$. By \cite[Corollary 1.4]{HMX18}, $(\Cz',\Phi^1+g^*\Cm_1+\epsilon \mathcal{A})$ has a relative log canonical model $\psi:\Cz'\dashrightarrow \Cw$ over $T$, which fiber by fiber $\psi_{t_i}:\Cz'_{t_i}\dashrightarrow \Cw_{t_i}$ gives the log canonical model for $(\Cz'_{t_i},\Phi^1|_{\Cz'_{t_i}}+(g^*\Cm_1+\epsilon \mathcal{A})|_{\Cz'_{t_i}})$ for all $i\geq 1$. Because by assumption, $\Cw_{t_1}$ is also the log canonical model of $(\Cz'_{t_1},\Phi^1|_{\Cz'_{t_1}}+g^*\Cm_1|_{\Cz'_{t_1}})$, then $K_{\Cw_{t_1}}+\psi_*(\Phi^1+g^*\Cm)|_{\Cw_{t_1}}$ is ample. After replacing $T$ by an open subset, we have $K_{\Cw}+\psi_*(\Phi^1+g^*\Cm)$ is ample over $T$, in particular, $K_{\Cw_{t_i}}+\psi_*(\Phi^1+g^*\Cm)|_{\Cw_{t_i}}$ is ample. Because $\mathcal{A}$ is ample and $\psi:\Cz'\dashrightarrow \Cw$ is $(K_{\Cz'}+\Phi^1+g^*\Cm_1+\epsilon \mathcal{A})$-non-negative, by negativity lemma, $\psi$ is also $(K_{\Cz'}+\Phi^1+g^*\Cm_1)$-non-negative, hence $\psi_{t_i}:\Cz'_{t_i}\dashrightarrow \Cw_{t_i}$ is $(K_{\Cz'_{t_i}}+\Phi^1|_{\Cz'_{t_i}}+g^*\Cm_1|_{\Cz'_{t_i}})$-non-negative. Also because $K_{\Cw_{t_i}}+\psi_*(\Phi^1+g^*\Cm)|_{\Cw_{t_i}}$ is ample, $\psi_{t_i}:\Cz'_{t_i}\dashrightarrow \Cw_{t_i}$ is the log canonical model of $(\Cz'_{t_i},\Phi^1|_{\Cz'_{t_i}}+g^*\Cm_1|_{\Cz'_{t_i}})$.

		Notice that by Theorem \ref{Invariance of Plurigenra},
		$$C=\mathrm{vol}(\Cz'_{t_k},K_{\Cz'_{t_k}}+\Phi^k|_{\Cz'_{t_k}}+g^*\Cm_1|_{\Cz'_{t_i}})=\mathrm{vol}(\Cz'_{t_1},K_{\Cz'_{t_1}}+\Phi^k|_{\Cz'_{t_1}}+g^*\Cm_1|_{\Cz'_{t_1}})$$
		for all $k>0$. Since we have assumed that 
		$$\Phi^1\leq \Phi^2\leq \Phi^3\leq ...,$$
		it follows by Lemma \ref{log canonical model} that $\psi_{t_i}:\Cz'_{t_i}\dashrightarrow \Cw_{t_i}$ is also a log canonical model of $(\Cz'_{t_i},\Phi^k|_{\Cz'_{t_i}}+g^*\Cm_1|_{\Cz'_{t_i}})$ for all $k\geq 1$, $\psi_{t_i*}\Phi ^k|_{\Cz'_{t_i}}=\psi_{t_i*}\Phi^1|_{\Cz'_{t_i}} $ and there is an isomorphism $\alpha_i: Z_i\rightarrow \Cw_{t_i}$.
		There is an integer $N >0$ such that $N(K_\Cw+\psi_* \Phi^1+\psi_* g^*\Cm_1)$ is very ample over $T$ and so $N(K_{\Cw_{t_i}}+\psi_* \Phi^1|_{\Cz'_{t_i}}+\psi_* g^*\Cm_1|_{\Cw_{t_i}})$ is very ample for all $i>0$. Since
		$$\psi_{t_i*} \Phi^1|_{\Cz'_{t_i}}=\psi_{t_i*}\Phi^i|_{\Cz'_{t_i}}=\alpha_{i*}B^i_{Z_i},$$
		we have $N(K_{Z_i}+B^i_{Z_i}+\M^i_{Z_i})$ is very ample for all $i>0$, which is the required contradiction.
	\end{proof}
	
	\begin{lemma}
		\label{log canonical model}
		Let $(W,B+M)$ be a generalized log canonical pair such that $K_W+B+M$ is big, $M$ is nef and $f:W\dashrightarrow Z$ the log canonical model of $(W,B+M)$. Suppose $B'\geq B$ is a $\mathbb{Q}$-divisor such that $(W,B'+M)$ is generalized log canonical and $\mathrm{vol}(W,K_W+B+M)=\mathrm{vol}(W,K_W+B'+M)$, then $f$ is also the log canonical model of $(W,B'+M)$
		\begin{proof}
			Replacing $W$ by a birational model, we may assume that $f:W\rightarrow Z$ is a morphism. Let $A=K_Z+f_*(B+M)$, then $A$ is ample. By the negativity lemma, $F:=K_W+B+M-f^*A$ is effective and has the same support as the $f$-exceptional divisor. We have for any $t\geq 0$
			\begin{equation}
				\begin{aligned}
					\mathrm{vol}(K_W+B+M) &=\mathrm{vol}(K_W+B+M+t(B'-B))\\
					&\geq \mathrm{vol}(f^*A+t(B'-B))\\
					&\geq \mathrm{vol}(f^*A)\\
					&=\mathrm{vol}(K_W+B+M).
				\end{aligned}
			\end{equation}
			Then 
			$$\mathrm{vol}(f^*A+t(B'-B))=\mathrm{vol}(f^*A),\ \ \ t\in [0,1]$$
			is a constant function. If $E$ is a component of $B'-B$ then by \cite{LM09} we have 
			\begin{equation}
				\begin{aligned}
					0	&=\frac{d}{dt}\mathrm{vol}(f^*A+tE)|_{t=0}\\
					&=n.\mathrm{vol}_{X|E}(f^*A)\\
					&\geq n.E.f^*A^{n-1}\\
					&=n.\mathrm{deg}f_*E.
				\end{aligned}
			\end{equation}
			Then $E$ is $f$-exceptional and so
			\begin{equation}
				\begin{aligned}
					H^0(W,\mathcal{O}_W(m(K_X+B'+M))) &= H^0(W,\mathcal{O}_W(mf^*A+m(E+F)))\\
					&=H^0(W,\mathcal{O}_W(mf^*A))\\
					&=H^0(W,\mathcal{O}_W(m(K_W+B+M)))
				\end{aligned}
			\end{equation}
			and thus $f$ is the log canonical model of $(W,B'+M)$.
		\end{proof}
	\end{lemma}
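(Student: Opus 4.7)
The plan is to exploit the fact that the log canonical model kills exactly the $f$-exceptional contributions, combined with the derivative of the volume function in the direction of $B'-B$. First, after replacing $W$ by a suitable higher birational model, I may assume that $f\colon W\to Z$ is an honest morphism (this preserves the generalized lc condition and the volumes). Setting $A:=K_Z+f_*(B+M)$, the hypothesis that $f$ is the log canonical model of $(W,B+M)$ means $A$ is ample, and by the negativity lemma applied to $K_W+B+M-f^*A$, I can write
\[
K_W+B+M \sim_{\mathbb{Q}} f^*A + F,
\]
where $F\geq 0$ is $f$-exceptional.

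Next, I would study the one-parameter family $D_t := f^*A + F + t(B'-B)$ for $t\in[0,1]$. Since $F$ and $t(B'-B)$ are effective, adding them only increases volume, so $\vol(D_t)\geq \vol(f^*A+t(B'-B))\geq \vol(f^*A)=\vol(K_W+B+M)$. On the other hand, $D_t\leq D_1=K_W+B'+M$ with effective difference, so $\vol(D_t)\leq \vol(K_W+B'+M)$, which by hypothesis equals $\vol(K_W+B+M)$. These inequalities force $t\mapsto \vol(f^*A+t(B'-B))$ to be constant on $[0,1]$.

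The key technical step is then to use the derivative formula of Lazarsfeld--Mustaţă: for any prime component $E$ of $B'-B$,
\[
0 = \frac{d}{dt}\vol(f^*A+tE)\Big|_{t=0^+} = n\cdot \vol_{W|E}(f^*A) \geq n\cdot (f^*A)^{n-1}\cdot E = n\cdot A^{n-1}\cdot f_*E
\]
by the projection formula. Since $A$ is ample on $Z$, the inequality $A^{n-1}\cdot f_*E\leq 0$ together with $f_*E\geq 0$ forces $f_*E=0$, i.e.\ $E$ is $f$-exceptional. Hence $B'-B$ is entirely $f$-exceptional. This is the main step, and the key obstacle is justifying the use of the restricted volume inequality in the generality needed here; I would invoke \cite{LM09} directly for this.

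Finally, with $B'-B$ now $f$-exceptional, the divisor $F+(B'-B)$ is $f$-exceptional and effective, and
\[
K_W+B'+M \sim_{\mathbb{Q}} f^*A + F + (B'-B).
\]
Pushing forward $m$-fold multiples through $f$ gives
\[
H^0(W,\mathcal{O}_W(m(K_W+B'+M))) = H^0(W,\mathcal{O}_W(mf^*A)) = H^0(Z,\mathcal{O}_Z(mA))
\]
for all sufficiently divisible $m$. Thus the canonical ring of $(W,B'+M)$ agrees with that of $(Z,A)$, and since $A$ is ample, $\mathrm{Proj}$ of this ring is $Z$ itself. Therefore $f\colon W\dashrightarrow Z$ is also the log canonical model of $(W,B'+M)$, as required.
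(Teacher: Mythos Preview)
Your proposal is correct and follows essentially the same route as the paper's proof: reduce to a morphism, write $K_W+B+M\sim_{\mathbb{Q}} f^*A+F$ via the negativity lemma, use the volume hypothesis to force $\vol(f^*A+t(B'-B))$ constant, apply the Lazarsfeld--Musta\c{t}\u{a} derivative formula to conclude each component of $B'-B$ is $f$-exceptional, and finish by comparing section rings. Your write-up is in fact slightly more careful than the paper's in spelling out the sandwich inequality and the projection-formula step.
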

	\section{Weak boundedness}
	The definition of weak boundedness is introduced in \cite{KL10}. 
	\begin{defn}
		A $(g,m)$-curve is a smooth curve $C^o$ whose smooth compactification $C$ has genus $g$ and such that $C\setminus C^o$ consists of $m$ closed points.
	\end{defn}
	\begin{defn}
		Let $W$ be a proper $k$-scheme with a line bundle $\mathcal{N}$ and let $U$ be an open subset of a proper variety. We say a morphism $\xi:U\rightarrow W$ is weakly bounded with respect to $\mathcal{N}$ if there exists a function $b_{\mathcal{N}}:\mathbb{Z}^2_{\geq 0}\rightarrow \mathbb{Z}$ such that for every pair $(g,m)$ of non-negative integers, for every $(g,m)$-curve $C^o\subseteq C$, and for every morphism $C^o\rightarrow U$, one has that $\mathrm{deg}\xi_C ^*\mathcal{N}\leq b_{\mathcal{N}}(g,m)$, where $\xi_C:C\rightarrow W$ is the induced morphism. The function $b_{\mathcal{N}}$ will be called a weak bound, and we will say that $\xi$ is weakly bounded by $b_\mathcal{N}$.
		
		We say a quasi-projective variety $U$ is weakly bounded if there exists a compactification $i:U\hookrightarrow W$, such that $i:U\hookrightarrow W$ is weakly bounded with respect to an ample line bundle $\mathcal{N}$ on $W$. 
	\end{defn}
	\begin{lemma}\label{decompose to get weakly bounded}
		Let $T$ be a quasi-projective variety. Then we can decompose $T$ into finitely many locally closed subsets $\cup T_i$, such that each $T_i$ is weakly bounded.
		\begin{proof}
			By the definition of weakly bounded, if a variety $U$ is weakly bounded, then any open subset $U^o\subset U$ is also weakly bounded. Therefore, we may replace $T$ with a compactification and assume that $T$ is projective, and we only need to show that $T$ has a weakly bounded open subset. 
			
			Fix an integer $n\geq 2$, let $A$ be a general very ample divisor on $\mathbb{P}^n_T$ such that $K_{\mathbb{P}^n_T}+A$ is also very ample, then $\Supp A$ is smooth and dominates $T$, by the generic smoothness theorem, there is an open subset $T_1\subset T$ such that $\Supp A_{T_1}$ is smooth over $T_1$, where $A_{T_1}:=A|_{T_1}$. 
			
			Since $K_{\mathbb{P}^n_T}+A$ is ample and $A$ is smooth, by the adjunction formula, $K_{A}=(K_{\mathbb{P}^n_T}+A)|_{A}$ is very ample, then $A_{T_1}\rightarrow T_1$ is a family of canonically polarized manifolds. We may assume that $T_1$ is irreducible and every fiber of $A_{T_1}\rightarrow T_1$ has Hilbert polynomial $h(m)=\chi(\omega_{A_t}^{\otimes m})$.
			
			Write $\Cm^o _h$ for the (Deligne-Mumford) stack space of canonically polarized manifolds with Hilbert polynomial $h$ and $\mathbf{M}^o_h$ for its coarse moduli space. It is easy to see that $g:A_{T_1}\rightarrow T_1\in \Cm^o_h( T_1)$. Let $\psi: T_1\rightarrow \mathbf{M}^o_h$ be the induced moduli map. Because $A$ is very ample on $\mathbb{P}^n_T$ and $K_{\mathbb{P}^n_T}+A$ is very ample, $\psi$ is a finite morphism.
			
			By \cite[Lemma 6.2]{KL10}, the stack $\mathbf{M}^o_h$ is weakly bounded with respect to $\mathbf{M}_h$ and $\lambda\in \mathrm{Pic}(\mathbf{M}_h)$ by a function $b(g,d)$, where $\mathbf{M}_h$ is a compactification of $\mathbf{M}^o _h$, $\lambda $ is an ample line bundle. Let $T_1^c $ be a compactification of $T_1$ such that $\psi:T_1\rightarrow \mathbf{M}^o_ h$ extends to a morphism $\psi ^c:T_1 ^c \rightarrow \mathbf{M} _h$. Suppose $C^o\subseteq C$ is a $(g,d)$-curve, $C^o\rightarrow T_1$ be a morphism and $\xi:C\rightarrow T_1 ^c$ be its closure, then $\psi^c\circ \xi :C\rightarrow \mathbf{M}_h$ is the closure of $C^o\rightarrow T_1\xrightarrow{\psi }\mathbf{M}^o_h$. By the definition of weakly bounded, $ \mathrm{deg}(\psi^c\circ \xi)^* \lambda \leq b(g,d)$, then $ T_1$ is weakly bounded with respect to $T_1^c$ and $\psi^* \lambda$. 
			Because $\psi$ is a finite morphism, $ \psi^*\lambda$ is ample, then $T_1$ is weakly bounded.
			
		\end{proof}
	\end{lemma}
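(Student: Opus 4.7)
The plan is Noetherian induction on $\dim T$. Since any open subset of a weakly bounded variety is again weakly bounded (a morphism from a $(g,m)$-curve into the open subvariety is in particular a morphism into the ambient one, so the weak bound restricts), it suffices to produce a single nonempty weakly bounded open subset $T_1 \subseteq T$ of any irreducible quasi-projective $T$; the complement has strictly smaller dimension and the induction closes. After replacing $T$ by a projective compactification (losing nothing, since I only need an open), I reduce to this local problem on a projective irreducible $T$.

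The mechanism I would invoke for weak boundedness is the moduli space of canonically polarized manifolds. By Kovács--Lieblich \cite[Lemma 6.2]{KL10}, the coarse moduli space $\mathbf{M}^o_h$ of canonically polarized manifolds with fixed Hilbert polynomial $h$ admits a compactification $\mathbf{M}_h$ and an ample line bundle $\lambda$ there with respect to which $\mathbf{M}^o_h$ is weakly bounded. So if I can produce a morphism $\psi : T_1 \to \mathbf{M}^o_h$ which extends to a finite morphism $\psi^c: T_1^c \to \mathbf{M}_h$ on compactifications, then $(\psi^c)^*\lambda$ is ample on $T_1^c$ and the weak bound pulls back: any $(g,m)$-curve mapping to $T_1$ maps, by composition with $\psi$, to $\mathbf{M}^o_h$ with bounded $\lambda$-degree, hence with bounded $\psi^*\lambda$-degree.

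To construct the family, I would fix $n \geq 2$ and choose a general sufficiently ample divisor $A$ on $\mathbb{P}^n_T$ such that both $A$ and $K_{\mathbb{P}^n_T}+A$ are very ample. Bertini makes $A$ smooth, and generic smoothness for the projection $A \to T$ yields an open $T_1 \subseteq T$ over which $A_{T_1} \to T_1$ is smooth. By adjunction $K_{A_t} = (K_{\mathbb{P}^n_T}+A)|_{A_t}$ is very ample, so each fiber is a canonically polarized manifold; after shrinking $T_1$ further to fix the Hilbert polynomial, this family defines a classifying morphism $\psi: T_1 \to \mathbf{M}^o_h$.

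The step I expect to be the main obstacle is the finiteness of $\psi$, for two fiber parameters $t \neq t'$ could a priori give abstractly isomorphic canonically polarized hypersurfaces. This is where the double very-ampleness of $A$ and $K_{\mathbb{P}^n_T}+A$ is essential: it rigidifies the embedding $A_t \hookrightarrow \mathbb{P}^n$ so that any abstract isomorphism $A_t \cong A_{t'}$ is induced by an element of the finite-dimensional group $\mathrm{PGL}_{n+1}$ acting on ambient space, making $\psi$ quasi-finite. Compactifying to $\psi^c: T_1^c \to \mathbf{M}_h$ and using that a quasi-finite proper morphism is finite upgrades this to finiteness, and then finite pullback of an ample line bundle is ample, so $\psi^*\lambda$ gives an ample bundle with respect to which $T_1$ is weakly bounded.
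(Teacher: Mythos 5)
Your proposal follows essentially the same route as the paper: reduce (by the stability of weak boundedness under passing to open subsets, plus Noetherian induction/stratification) to exhibiting one weakly bounded open subset, build a family of canonically polarized manifolds from a general divisor $A$ on $\mathbb{P}^n_T$ with $A$ and $K_{\mathbb{P}^n_T}+A$ very ample, map to the coarse moduli space $\mathbf{M}^o_h$, and pull back the weak bound of \cite[Lemma 6.2]{KL10} through the finite moduli map $\psi$ so that $\psi^*\lambda$ is ample. The one step you single out as delicate, the finiteness of $\psi$, is treated in the paper with the same brevity (it is asserted from the double very-ampleness, essentially your $\mathrm{PGL}_{n+1}$-rigidification remark), so your argument matches the paper's in both structure and level of detail.
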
 
	
	\begin{thm}{\cite[Proposition 2.14]{KL10}}\label{weakly bounded morphisms are bounded}
		Let $T$ be a quasicompact quasiseparated reduced $\mathbb{C}$-scheme and $\Cu\rightarrow T$ a smooth morphism. Given a projective $T$-variety and a polarization over $T$, $(\Cm,\mathcal{O}_{\Cm}(1))$, an open subvariety $\Cm^o\hookrightarrow \Cm$ over $T$, and a weak bound $b$, there exists a $T$-scheme of finite type $\Cw^b_{\Cm^o}$ and a morphism $\Theta:\Cw^b_{\Cm^o}\times \mathcal{U}\rightarrow \Cm^o$ such that for every geometric point $t\in T$ and for every morphism $\xi:\mathcal{U}_t\rightarrow \Cm^o_t\subset \Cm_t$ that is weakly bounded by $b$ there exists a point $p\in \Cw^b_{\Cm^o_t}$ such that $\xi=\Theta|_{\{p\}\times \mathcal{U}_t}$.
		
		In particular, if $\Cm^o$ is weakly bounded and $\Cm$ is the compactification, by definition, every morphism $\xi:\mathcal{U}_t\rightarrow \Cm^o_t\subset \Cm_t$ is weakly bounded by a weak bound $b$, then $\xi=\Theta|_{\{p\}\times \mathcal{U}_t}$ for a closed point $p\in \Cw^b_{\Cm^o_t}$.
	\end{thm}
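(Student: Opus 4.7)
The plan is to realize each weakly bounded morphism $\xi : \Cu_t \to \Cm^o_t$ as a point of a relative Hom scheme, and to show that the weak bound $b$ confines such morphisms to finitely many components of that Hom scheme. First, I invoke Grothendieck's construction to produce the relative Hom scheme $\Hom_T(\Cu, \Cm)$ parameterizing $T$-morphisms $\Cu \to \Cm$. Since $\Cm$ is projective over $T$ with the polarization $\Oo_{\Cm}(1)$, and since (after a harmless stratification of $T$) we may assume $\Cu$ also carries a relative polarization, the Hom scheme decomposes as a countable disjoint union $\bigsqcup_P \Hom^P_T(\Cu, \Cm)$ indexed by the Hilbert polynomial $P$ of the graph $\Gamma_\xi \subset \Cu \times_T \Cm$ with respect to the product polarization, with each piece of finite type over $T$.

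Second, I show that the weak bound $b$ forces the Hilbert polynomials $P$ that arise to lie in a finite set. Via the first projection $\Gamma_\xi \cong \Cu_t$, the restriction of $p_2^*\Oo_{\Cm}(1)$ to $\Gamma_\xi$ is identified with $\xi^*\mathcal{N}$ (for an ample line bundle $\mathcal{N}$ built from $\Oo_{\Cm}(1)$ on the compactification), so $P$ is computed from intersection numbers of $\xi^*\mathcal{N}$ against powers of the fixed ample class on $\Cu_t$, plus data intrinsic to $\Cu_t$. A Bertini-type reduction expresses these numbers as degrees of $\xi^*\mathcal{N}$ on general complete-intersection curves in $\Cu_t$, and compactifying such a curve produces a $(g,m)$-curve with $(g,m)$ controlled in terms of the chosen polarization on $\Cu$ only. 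The weak boundedness hypothesis $\deg\xi_C^*\mathcal{N}\le b(g,m)$ then translates directly into a uniform numerical bound on these intersection numbers and hence on $P$.

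Third, knowing that only finitely many Hilbert polynomials $P_1,\ldots,P_k$ occur, I form the finite-type $T$-scheme $\bigsqcup_{i=1}^k \Hom^{P_i}_T(\Cu, \Cm)$. The condition that the image of $\xi$ land in the open subset $\Cm^o \subset \Cm$ is an open condition on this space, and I take $\Cw^b_{\Cm^o}$ to be the corresponding open subscheme. The universal morphism on the Hom scheme then restricts to the required $\Theta : \Cw^b_{\Cm^o} \times_T \Cu \to \Cm^o$, and by construction every weakly bounded $\xi : \Cu_t \to \Cm^o_t$ is realised as $\Theta|_{\{p\}\times \Cu_t}$ for some $p \in (\Cw^b_{\Cm^o})_t$; the final ``in particular'' assertion is then immediate from the definition of weak boundedness of $\Cm^o$.

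The main obstacle is the second step: turning the curve-wise degree bound into a uniform bound on the Hilbert polynomial of the graph. Over a base $T$ one must ensure that the invariants $(g,m)$ of the complete-intersection test curves used in the Bertini argument do not blow up in families, which typically forces further stratification of $T$ and a careful, flat choice of polarization on $\Cu$ over each stratum; once this is in place the single value $b(g,m)$ can be applied uniformly and the Hilbert polynomials are genuinely bounded. Everything else is a formal application of representability of Hom schemes and openness of the condition ``the morphism factors through $\Cm^o$''.
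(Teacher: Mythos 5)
First, note that the paper itself contains no proof of this statement: it is quoted directly from \cite[Proposition 2.14]{KL10}, so your proposal can only be compared with the argument of Kov\'acs--Lieblich. Measured against that, your outline is right in spirit (bound the graphs, land in finitely many components of a parameter space, take the universal morphism), but the first step as written would fail. Grothendieck's representability of $\Hom_T(\Cu,\Cm)$ as an open subscheme of a Hilbert scheme requires the source $\Cu$ to be \emph{proper} (flat and projective) over $T$. Here $\Cu\rightarrow T$ is only smooth with quasi-projective, typically non-proper fibers --- indeed the whole point of weak boundedness is that $\Cu_t$ is an open variety and $\xi$ need not extend to any compactification of $\Cu_t$. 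So there is no Hom scheme of the open family to decompose, and the ``Hilbert polynomial of the graph $\Gamma_\xi\subset \Cu_t\times\Cm_t$'' is not even defined, since $\Gamma_\xi\cong \Cu_t$ is not proper. The correct object is the closure $\overline{\Gamma}_\xi$ of the graph inside $\overline{\Cu}_t\times\Cm_t$ for a relative compactification $\overline{\Cu}/T$ (available after stratifying $T$); one bounds its degree, places it in finitely many components of the Hilbert scheme (or Chow variety) of $\overline{\Cu}\times_T\Cm$, and then takes the locally closed locus where the parametrized subscheme is, over $\Cu$, the graph of a morphism whose image lies in $\Cm^o$; the universal family over that locus gives $\Theta$. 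This is exactly the kind of device used in \cite{KL10}, and without it your step one invokes a moduli space that does not exist.

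Second, your degree estimate is too quick even once one works with $\overline{\Gamma}_\xi$. Its degree with respect to a product polarization involves all the mixed intersection numbers $(p_1^*A)^{d-i}\cdot(p_2^*\mathcal{N})^i\cdot\overline{\Gamma}_\xi$ for $0\leq i\leq d$, whereas the weak bound applied to general complete-intersection $(g,m)$-curves only controls the term with $i=1$ ``directly''. To bound the terms with $i\geq 2$ one has to restrict to general members of $|A|$ and induct on $\dim \Cu_t$, using that weak boundedness is inherited by the open parts of general complete-intersection subvarieties and that the $(g,m)$-invariants of the test curves stay uniformly bounded in the family after a further stratification of $T$; you flag the uniformity of $(g,m)$ as the main obstacle but not this induction, and the claim that the curve-wise bound ``translates directly'' into a bound on the Hilbert polynomial is where the argument is genuinely incomplete. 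With these two repairs --- graph closures in a relative compactification instead of a Hom scheme of the open family, and the inductive control of all mixed degrees --- your outline does recover the strategy of \cite{KL10}.
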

	
	\section{Hilbert scheme and the moduli part}
	
	\subsection{Parametrizing space}

	A class of polarized Calabi-Yau pairs is a set $\mathscr{C}$ consisting of a triple pair $(X,\Delta,H)$ such that
	\begin{itemize}
		\item $H$ is an effective ample divisor,
		\item $K_X+\Delta\sim_{\mathbb{Q}}0$, and
		
		\item $(X,\Delta+\epsilon H)$ is log canonical for a positive number $\epsilon \ll 1$.
	\end{itemize} 	
	
	A family of polarized Calabi-Yau pairs over a normal base scheme $S$ consists of a flat, proper morphism $f: X\rightarrow S$, a $\mathbb{Q}$-divisor $\Delta$ on $X$ and a $\mathbb{Q}$-Cartier divisor $H$ such that $K_{X/S}+\Delta$ is $\mathbb{Q}$-Cartier and all fibers $(X_s,\Delta_s,H_s)$ are polarized Calabi-Yau pairs. We denote it by $(X,\Delta,H)\rightarrow S$.
	
	Given a class of polarized Calabi-Yau pairs $\mathscr{C}$, we define $\mathscr{MC}(S)$ to be the set of families of polarized Calabi-Yau pairs over $S$, $(X,\Delta, H)\rightarrow S$ such that $K_X+\Delta$ is $\mathbb{Q}$-Cartier and $(X_s,\Delta_s, H_s)\in\mathscr{C}$ for every closed point $s\in S$.

	Suppose $\mathscr{C}$ is a class of $n$-dimensional polarized Calabi-Yau pairs. We say $\mathscr{C}$ is bounded if the following two equivalent conditions hold:
	\begin{itemize}
		\item There exists a positive number $C$ and a positive integer $d$, such that for every $(Y,D,H)\in \mathscr{C}$, $dH$ is very ample without higher cohomology, $(dH)^{n}\leq C$, and $(dH)^{n-1}.\mathrm{red}(D)\leq C$.
		\item There is a flat projective morphism $\Cz\rightarrow S$ over a scheme of finite type, two divisors $\pp,\mathcal{L}$ on $\Cz$ which are flat over $S$ and a positive integer $d$, such that $S$ is of finite type, and for every $(Y,D,H)\in \mathscr{C}$, there is a closed point $s\in S$ and an isomorphism $\phi:(Y,dH)\rightarrow (\Cz_s,\mathcal{L}_s)$ such that $\phi_*D\leq \pp_s$. 
	\end{itemize} 
	
	If condition (1) holds, then there is a (non-unique) natural choice of the scheme $S$ in (2). By boundedness of Chow variety, we may assume that $Y$ has a fixed polynomial $H(t)$ with respect to $dH$, let $\mathbb{P}$ be the projective space of dimension $H(1)-1$ with a fixed coordinate system. By the proof of \cite[Proposition 5.17]{KP17}, because normal is an open condition, we may choose $\Ch'$ to be the locally closed subset of the Hilbert scheme of $\mathbb{P}$ which parametrizes all irreducible normal subvarieties of $\mathbb{P}$ with Hilbert polynomial $H(t)$ and $\mathscr{F}:\Cx_{\Ch'}\rightarrow \Ch'$ be the universal family. 
	
	Let $\Lambda$ be a finite set, $p_i(t),i\in \Lambda$ be $|\Lambda|$ polynomials such that $\mathrm{deg}p_i(t)=\mathrm{deg}H(t)-1$ for all $i$. Let
	$$\Ch_i:=\mathrm{Hilb}_{p_i(t)}(\Cx_{\Ch'} / \Ch') $$
	be the locally closed subset of the Hilbert scheme which parametrizes closed pure dimensional subschemes $D_i\subset \Cx_{\Ch'}$ such that 
	$D_i\rightarrow \Ch'$ is flat family of varieties with Hilbert polynomial $p_i(t)$. Let $\mathcal{D}_i\rightarrow \Ch_i$ be its universal family. For simplicity of notation, we define $\Ch:=\Ch_1\times_{\Ch'}...\times _{\Ch'}\Ch_{|\Lambda|}$ and 
	$$(\Cx_{\Ch},\mathcal{D}_{\Ch}):=(\Cx_{\Ch'}\times_{\Ch'} \Ch,\sum \mathcal{D}_i\times _{\Ch_i}\Ch).$$

	\begin{rem}\label{moduli map}
		Let $\mathscr{C}$ be a bounded class of polarised Calabi-Yau pairs. With the same notation as above, let $(X,\Delta)$ be a klt pair and $L$ is a divisor on $X$, suppose the general fiber of a contraction $f:(X,\Delta,L)\rightarrow Z$ is in $\mathscr{C}$, that is, there is an open subset $U\subset Z$ such that for every closed point $u\in U$, $(X_u,\Delta|_{X_u},L|_{X_u})\in \mathscr{C}$.
		
		Write $\Delta=\sum \Delta_i$ as the sum of irreducible components. Because the degree of $\Supp\Delta_i$ is bounded from above, by the boundedness of the Chow varieties, the Hilbert polynomial of $\Delta_{i,u}$ is in a finite set $p_i(t),i\in \Lambda$. Let $(\Cx_{\Ch},\mathcal{D}_{\Ch})\rightarrow \Ch$ be the family constructed as above. By the construction of $\Ch$, every closed point $u\rightarrow U$ corresponds to a closed point in $\Ch$ and there is a morphism $U\rightarrow \Ch$. 
		
		Notice that $\Delta_{i,u}$ may not be irreducible for every $u\in U$, two irreducible components of $\Delta_u$ may be considered as two divisors or just one divisor, depending on the divisor $\Delta_i$ on $X$. That means given two contractions $(X^i,\Delta^i)\rightarrow Z^i,i=1,2$ satisfying the given conditions, even if $(X^1_{u_1},\Delta^1_{u_1})\cong (X^2_{u_2},\Delta^2_{u_2})$, $u_1$ and $u_2$ may corresponds to different points in $\Ch$. 
		
		Since $dL$ is very ample without higher cohomology and $f:X\rightarrow Z$ is flat over $U$, $f_*\mathcal{O}_X(dL)$ is locally free over $U$. Replacing $U$ with an open subset, we may assume that $f_*\mathcal{O}_X(dL)$ is in fact free. Fixing a basis in the space of sections then gives a map $U\rightarrow \Ch'$ and $X_U\rightarrow U$ is isomorphic to the pull-back of the universal family $\Cx_{\Ch'}\rightarrow \Ch'$. Similarly, each irreducible component $\Delta_i$ of $\Delta$ gives a map $U\rightarrow \Ch_i$. Hence there is a morphism $\phi:U\rightarrow \Ch$ such that $f:(X_U,\Supp(\Delta_U))\rightarrow U$ is isomorphic to the pullback of $(\Cx_{\Ch},\mathcal{D}_{\Ch})\rightarrow \Ch$ by $\phi$.
		
		Suppose $\alpha=(\alpha_1,...,\alpha_k)$ is a vector of rational numbers and 
		$$\Delta_U=\alpha \Supp(\Delta_U):=\sum \alpha_i \Supp(\Delta_{i,U}),$$ 
		by the construction of $\mathcal{D}_\Ch$, $(X_U,\Delta_U)$ is isomorphic to the pullback of $(\Cx_{\Ch},\alpha \mathcal{D}_{\Ch})\rightarrow \Ch$ by $\phi$. If there is a point $u\in U$ such that $(X_u,\Delta_u)$ is a log Calabi-Yau pair, then $(\Cx_{\phi(u)},\alpha \mathcal{D}_{\phi(u)})$ is a log Calabi-Yau. If $\mathrm{coeff}(\Delta)\subset \Ii$ is a DCC set, then by \cite[Theorem 1.5]{HMX14}, $\alpha \mathcal{D}_{\Ch}$ is in a finite set and there are only finitely many $\alpha \mathcal{D}_{\Ch}$.
		
		Moreover, by Lemma 7.4 in the first arxiv version of \cite{Bir23}, after replacing $\Ch$ by a stratification of a locally closed subvariety, we may assume that $\Ch$ is smooth and $(\Cx_{\Ch},\alpha \mathcal{D}_{\Ch})$ is klt Calabi-Yau over $\Ch$, then $(\Cx_{\Ch},\alpha \mathcal{D}_{\Ch})\rightarrow\Ch$ is an lc-trivial fibration. 
	\end{rem}

	\subsection{Moduli part}
	In this section, we deal with algebraic fibrations whose general fibers are log Calabi-Yau pairs. We claim that such a contraction naturally induces an lc-trivial fibration, then any such fibration has a moduli $\bm{b}$-divisor by the canonical bundle formula.
	
	\begin{thm}
		\label{general fiber calabi-yau will induce lc-trivial fibration structure}
		
		Let $(X,\Delta)$ be an lc pair and $f:(X,\Delta)\rightarrow Z$ an algebraic contraction to a projective normal $\mathbb{Q}$-factorial variety, suppose that the general fiber $(X_g,\Delta_g)$ is a log Calabi-Yau pair. Assume that there is a crepant birational morphism $g:(X',\Delta')\rightarrow (X,\Delta)$ and a divisor $D$ on $Z$, such that the morphism $h:=f\circ g:X'\rightarrow Z$ is smooth over $Z\setminus D$ and $\Supp(\Delta')$ is simple normal crossing over $Z\setminus D$.
		
		Then there is a $\mathbb{Q}$-divisor $\Lambda'$ on $X'$, such that
		\begin{itemize}
			\item $(X'_\eta,\Lambda '_{X_\eta})\cong (X'_\eta,\Delta '_{X_\eta})$, where $\eta$ is the generic point of $Z$.
			\item $\Supp(\Lambda')$ is log smooth over $Z\setminus D$ and,
			\item $(X',\Lambda')\rightarrow Z$ is an lc-trivial fibration.
		\end{itemize}
		\begin{proof}
			Since $(X_g,\Delta_g)$ is a Calabi-Yau variety, we have $K_{X'_\eta}+\Delta_\eta' \sim_{\mathbb{Q}} 0$, then there exists a vertical $\mathbb{Q}$-divisor $B'$ such that $K_{X'}+\Delta'+B'\sim_{\mathbb{Q}} 0$.  
			
			Suppose $B'=R+G$, where $\Supp(R)\not\subset h^{-1}(\Supp(D))$ and $ \Supp(G)\subset h^{-1}(\Supp(D))$. Because $R$ is vertical, $h$ is smooth over the generic point of $h(\Supp R)$ and $Z$ is $\mathbb{Q}$-factorial, $h(R)$ is a well defined $\mathbb{Q}$-Cartier divisor on $Z$, denote it by $R_Z$. Also because $h$ is smooth over $Z\setminus \Supp(D)$, there exists a $\mathbb{Q}$-divisor $F_R$ supported on $h^{-1}(\Supp(D))$, such that $R+F_R=h^*R_Z$, then $K_{X'}+\Delta'+B'-(R+F_R)\sim_{\mathbb{Q},h} 0$. Let $\Lambda':=\Delta'+B'-(R+F_R)$, then $K_{X'}+ \Lambda'\sim_{\mathbb{Q}} 0$, and $\Lambda'_\eta= \Delta'_\eta$. Write $\Delta'=\Delta'_{\geq 0}-\Delta'_{\leq 0}$, because $\Delta'_{\leq 0}$ is $g$-exceptional, it is easy to see that $(X',\Lambda')\rightarrow Z$ is an lc-trivial fibration. Because $\Supp(\Delta')$ is log smooth over $Z\setminus D$, $\Supp(F_R)\subset h^{-1}(D)$ and $\Supp(B'-R)\subset h^{-1}(D)$, then $\Supp(\Lambda')$ is log smooth over $Z\setminus D$. 
		\end{proof}
		
	\end{thm}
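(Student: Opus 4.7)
The plan is to use the log Calabi-Yau generic fiber to produce a vertical divisor $B'$ with $K_{X'}+\Delta'+B'\sim_{\mathbb{Q}}0$, then subtract from $B'$ exactly the pullback of a suitable divisor on $Z$, so that what remains is supported on $h^{-1}(\Supp D)$. Concretely, since $(X_g,\Delta_g)$ is log Calabi-Yau and $g$ is crepant, $(K_{X'}+\Delta')|_{X'_\eta}\sim_{\mathbb{Q}}0$; combined with $h_*\mathcal{O}_{X'}=\mathcal{O}_Z$ and the $\mathbb{Q}$-factoriality of $Z$, this promotes to $K_{X'}+\Delta'\sim_{\mathbb{Q}}h^*L$ for some $\mathbb{Q}$-Cartier $L$ on $Z$, and we may take $B':=-h^*L$, vertical by construction.

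The key construction is then pure divisor bookkeeping. Decompose $B'=R+G$ with $\Supp(G)\subset h^{-1}(\Supp D)$ and no component of $R$ contained in $h^{-1}(\Supp D)$. Since $R$ is vertical and $h$ is smooth at the generic point of $h(\Supp R)\not\subset \Supp D$, the image $R_Z:=h_*R$ is a well-defined $\mathbb{Q}$-divisor on $Z$, hence $\mathbb{Q}$-Cartier by $\mathbb{Q}$-factoriality. Because $h$ is smooth over $Z\setminus \Supp D$, the pullback $h^*R_Z$ equals $R$ outside $h^{-1}(\Supp D)$, so we can write $h^*R_Z=R+F_R$ with $\Supp(F_R)\subset h^{-1}(\Supp D)$. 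Set $\Lambda':=\Delta'+B'-h^*R_Z=\Delta'+G-F_R$. By construction $\Lambda'$ agrees with $\Delta'$ over $Z\setminus \Supp D$, which immediately gives the generic-fiber identification $(X'_\eta,\Lambda'_{X'_\eta})\cong(X'_\eta,\Delta'_{X'_\eta})$ as well as log smoothness of $\Supp\Lambda'$ over $Z\setminus D$.

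The three lc-trivial fibration conditions for $(X',\Lambda')\to Z$ then follow in turn. Relative $\mathbb{Q}$-triviality: $K_{X'}+\Lambda'\sim_{\mathbb{Q}}-h^*R_Z\sim_{\mathbb{Q},h}0$. The slc-in-codimension-one condition over $Z\setminus \Supp D$ reduces to the log smoothness already established. For the generic-fiber cohomology condition $h^0(X'_\eta,\mathcal{O}_{X'_\eta}(\lceil \Lambda'_{\leq 0}|_{X'_\eta}\rceil))=1$, write $\Delta'=\Delta'_{\geq 0}-\Delta'_{\leq 0}$; since $(X,\Delta)$ is an effective pair and $g$ is crepant, $\Delta'_{\leq 0}$ is $g$-exceptional, so $\lceil \Delta'_{\leq 0}|_{X'_\eta}\rceil$ is effective and $g$-exceptional over $X_\eta$, which forces $h^0(X'_\eta,\mathcal{O}(\lceil\Lambda'_{\leq 0}|_{X'_\eta}\rceil))=h^0(X_\eta,\mathcal{O}_{X_\eta})=1$. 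The main obstacle I expect is the very first step: upgrading relative $\mathbb{Q}$-triviality of $K_{X'}+\Delta'$ over $Z$ to a genuine global $\mathbb{Q}$-linear equivalence $K_{X'}+\Delta'+B'\sim_{\mathbb{Q}}0$ with $B'$ vertical requires $Z$ being normal with $h_*\mathcal{O}_{X'}=\mathcal{O}_Z$ and is exactly where the $\mathbb{Q}$-factoriality hypothesis on $Z$ enters; once that is arranged, everything else reduces to tracking components supported on $h^{-1}(\Supp D)$.
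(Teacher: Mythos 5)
Your construction from the second step onward is exactly the paper's: split the vertical correction as $B'=R+G$ with $G$ supported over $D$ and no component of $R$ over $D$, push forward to $R_Z=h_*R$ (here $\mathbb{Q}$-factoriality of $Z$ is used), use smoothness of $h$ over $Z\setminus\Supp D$ and connectedness of fibers to write $h^*R_Z=R+F_R$ with $\Supp F_R\subset h^{-1}(\Supp D)$, set $\Lambda'=\Delta'+B'-h^*R_Z$, and check the three lc-trivial conditions, using $g$-exceptionality of $\Delta'_{\leq 0}$ for the condition $h^0=1$ on the generic fiber. The genuine problem is your first step. It is false that $(K_{X'}+\Delta')|_{X'_\eta}\sim_{\mathbb{Q}}0$ together with $h_*\mathcal{O}_{X'}=\mathcal{O}_Z$ and $\mathbb{Q}$-factoriality of $Z$ promotes to $K_{X'}+\Delta'\sim_{\mathbb{Q}}h^*L$: triviality on the generic fiber only yields $K_{X'}+\Delta'\sim_{\mathbb{Q}}-B'$ with $B'$ vertical, and a vertical divisor need not be, even numerically, a pullback from $Z$. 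For example, let $X\rightarrow Z=\mathbb{P}^1$ be a minimal elliptic surface with an $I_2$ fiber with components $C_1,C_2$, and $\Delta=\frac{1}{2}C_1$; the general fiber of $(X,\Delta)$ is log Calabi--Yau, $Z$ is $\mathbb{Q}$-factorial, yet $(K_X+\Delta)\cdot C_1=\frac{1}{2}C_1^2=-1\neq 0$, whereas any pullback from $Z$ meets every contracted curve trivially, so $K_X+\Delta$ (and hence $K_{X'}+\Delta'=g^*(K_X+\Delta)$ on any crepant model) is not $\mathbb{Q}$-linearly equivalent to a pullback. Indeed, if your claim were true the theorem would be nearly vacuous, since $(X',\Delta')\rightarrow Z$ would already be an lc-trivial fibration and no modification $\Lambda'$ would be needed; the whole point of introducing $\Lambda'$ is to dispose of the vertical part of $B'$ that is not a pullback, by pushing it into the locus over $D$.

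The repair is precisely the weaker statement the paper asserts (and which you gesture at in your closing remark, though you attach the wrong hypotheses to it): choose $m$ with $m(K_{X'}+\Delta')|_{X'_\eta}\sim 0$ and extend the corresponding rational function from the generic fiber; the difference is a vertical, not necessarily effective, divisor $B'$ with $K_{X'}+\Delta'+B'\sim_{\mathbb{Q}}0$. This needs neither $\mathbb{Q}$-factoriality nor $h_*\mathcal{O}_{X'}=\mathcal{O}_Z$; $\mathbb{Q}$-factoriality is used only later to make $R_Z=h_*R$ $\mathbb{Q}$-Cartier so that $h^*R_Z$ is defined, and smoothness of $h$ over $Z\setminus\Supp D$ (plus connected fibers, so the fibers over codimension-one points off $D$ are irreducible and reduced) is what lets you write $R+F_R=h^*R_Z$ with $F_R$ supported over $D$. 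With $B'$ merely vertical, the rest of your bookkeeping goes through verbatim and coincides with the paper's proof.
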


	\begin{prop}
		\label{moduli part stable under base change}
		Let $f:(X,\Delta)\rightarrow Z$ be an lc-trivial vibration between projective normal varieties, $\rho: Z'\rightarrow Z$ a surjective morphism from a projective normal variety $Z'$ and $f':(X',\Delta')\rightarrow Z'$ an lc-trivial fibration induced by the normalization of the main component of the base change,
		$$\xymatrix{
			(X,\Delta)  \ar[d] _{f}&    (X',\Delta') \ar[l]_{\rho_X} \ar[d]_{f'}\\
			Z&    Z'\ar[l]_{\rho}	 
		} $$
		Let $\M$ and $\M'$ be the moduli $\mathbf{b}$-divisors of $f$ and $f'$. Then
		\begin{enumerate}
			\item[(1)] if $\M_Z$ descends on $Z$ and $\M'_{Z'}$ descends on 
			$Z'$, then $\rho^*\M_Z=\M'_{Z'}$, and
			\item[(2)] if $\rho$ is finite and $\M_Z$ is $\mathbb{Q}$-Cartier,   then $\rho^*\M_Z=\M'_{Z'}$. In particular, $\M$ descends on $Z$ if 
			and only if $\M'$ descends on $Z'$.
		\end{enumerate}
		\begin{proof}
			(1) is \cite[Proposition 3.1]{Amb05}.
			
			For (2), let $g':W'\rightarrow Z$ and $g:W\rightarrow Z$ be birational morphisms such that $\M'$ descends on $W'$, $\M$ descends on $W$ and $\rho:Z'\rightarrow Z$ lifts to a morphism $\rho_W:W'\rightarrow W$. Then $\rho_W^*\M_W =\M'_{W'}$ by (1). Because $\rho$ is finite, any $g$-exceptional divisor is not dominated by divisors on $W'$, then the push-forward of $\rho_W^*\M_W =\M'_{W'}$ to $Z'$ gives $\rho^*\M_Z=\M'_{Z'}$.
		\end{proof}
	\end{prop}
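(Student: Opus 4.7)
The plan is to prove (1) by invoking Ambro's base change formula for the moduli $\bm{b}$-divisor of an lc-trivial fibration, and then to reduce (2) to (1) by passing to a common birational model on which both $\M$ and $\M'$ descend, using the finiteness of $\rho$ to kill the exceptional corrections that appear after pushforward.

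For (1), nothing really needs to be done: once $\M$ descends on $Z$ and $\M'$ descends on $Z'$, the identity $\rho^*\M_Z=\M'_{Z'}$ is exactly \cite[Proposition 3.1]{Amb05}, which deals with base change of the moduli $\bm{b}$-divisor under an arbitrary surjective $\rho$.

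For (2) the concrete steps are as follows. First, I choose birational morphisms $g:W\to Z$ and $g':W'\to Z'$ such that $\M$ descends on $W$ and $\M'$ descends on $W'$; after an additional birational modification of $W'$ (a resolution of indeterminacy of $W'\to Z'\to Z\dashleftarrow W$) I may arrange that the composition $W'\to Z'\to Z$ factors through a genuine morphism $\rho_W:W'\to W$. Applying (1) to the base-changed lc-trivial fibration over $W$ with map $\rho_W$ then yields $\rho_W^*\M_W=\M'_{W'}$. Next, since $\M_Z$ is $\mathbb{Q}$-Cartier and $g_*\M_W=\M_Z$, the divisor $E:=\M_W-g^*\M_Z$ is $g$-exceptional; pulling back via $\rho_W$ and using commutativity of the square $g\circ\rho_W=\rho\circ g'$ gives
\begin{equation*}
\M'_{W'} \;=\; {g'}^*\rho^*\M_Z \;+\; \rho_W^*E.
\end{equation*}
Applying $g'_*$ produces $\M'_{Z'} = \rho^*\M_Z + g'_*\rho_W^*E$, so the whole argument reduces to showing $g'_*\rho_W^*E = 0$. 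This is where the finiteness of $\rho$ plays its only essential role, and it is the one point that needs checking: the image $g(\Supp E)\subset Z$ has codimension $\geq 2$, so by finiteness $\rho^{-1}(g(\Supp E))$ still has codimension $\geq 2$ in $Z'$, and consequently $g'(\Supp(\rho_W^*E))$ is contained in this codimension-$2$ subset, forcing $\rho_W^*E$ to be $g'$-exceptional. I expect this codimension/finiteness check to be the only subtle point of the argument.

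For the ``in particular'' claim, the forward direction uses (2) to see that $\M'_{Z'}=\rho^*\M_Z$ is $\mathbb{Q}$-Cartier whenever $\M$ descends on $Z$; then for any birational $\pi:V'\to Z'$, I pass to a higher birational model $\tilde V'\to V'$ on which $\M'$ descends, apply (1) to the composition $\tilde V'\to Z$ together with the hypothesis that $\M$ descends on $Z$, and push the resulting equality forward to $V'$ to obtain $\M'_{V'}=\pi^*\M'_{Z'}$, which is precisely the descent of $\M'$ on $Z'$. The reverse direction is symmetric, using the fibre product $V\times_Z Z'\to V$ (which remains finite over $V$) in place of $\rho$, together with a higher birational model of $V$ where $\M$ descends to identify $\M_V$ with $\pi^*\M_Z$.
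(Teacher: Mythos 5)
Your proposal is correct and follows essentially the same route as the paper: part (1) is the citation of Ambro, and part (2) passes to models $W,W'$ where the moduli parts descend, lifts $\rho$ to $\rho_W$, applies (1), and uses finiteness of $\rho$ to see that the $g$-exceptional correction $E=\M_W-g^*\M_Z$ pulls back to a $g'$-exceptional divisor and hence dies under pushforward to $Z'$ — which is exactly the paper's (more tersely worded) argument. Your explicit codimension check and your spelled-out proof of the ``in particular'' equivalence are fine elaborations of steps the paper leaves implicit.
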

	\begin{thm}
		\label{restriction of moduli part is the moduli part of restriction}
		Let $(X,\Delta)$ be a log canonical pair, $f:(X,\Delta)\rightarrow Z$ an lc-trivial fibration to a smooth projective variety $Z$, $g:(X',\Delta')\rightarrow (X,\Delta)$ a crepant birational morphism which is also a log resolution of $(X,\Delta)$. Suppose $D\subset Z$ is a smooth divisor on $Z$ such that $h:=g\circ f$ is smooth over the generic point $\eta_D$ of $D$. Let $Y$ be the normalization of the irreducible component of $f^{-1}(D)$ that dominates $D$, $\Delta_Y$ the $\mathbb{Q}$-divisor on $Y$ such that $K_Y+\Delta_Y=(K_{X}+\Delta+f^*D)|_Y$. Let $\M_Z$ denote the moduli part of $(X,\Delta)\rightarrow Z$. Suppose there is a reduced divisor $B$ on $Z$ such that $B+D$ is a reduced simple normal crossing divisor, the morphism $h: X'\rightarrow Z$ and $\Delta', B$ satisfy the standard normal crossing assumptions. Then $(Y,\Delta_Y)\rightarrow D$ is an lc-trivial fibration and its moduli part $\M_D$ is equal to $\M_Z|_D$.
		\begin{proof}
			By assumption, $h$ is smooth over $Z\setminus B$ and $D$ is smooth, the singular locus of $h^{-1}(D)$ is contained in $h^{-1}(B)\cap h^{-1}(D)$. After blowing up a sequence of smooth subvarieties whose centers are contained in the singular locus of $h^{-1}(D)$, we may assume that $(X',\Delta'+h^*(B+D))$ is log smooth. It is easy to see that the morphism $h: X'\rightarrow Z$ and $\Delta', B$ also satisfy the standard normal crossing assumption. 
			
			Let $E'$ be the irreducible component of $h^*D$ that dominates $D$, $\Delta'_{E'}$ the $\mathbb{Q}$-divisor on $E'$ such that $K_{E'}+\Delta'_{E'}=(K_{X'}+\Delta'+h^*D)|_{E'}$.  It is easy to see that the generic fiber of $(E',\Delta'_{E'})\rightarrow D$ is equal to the generic fiber of $(Y,\Delta_Y)\rightarrow D$, which means they have the same moduli part, then we only need to prove the result for $(E',\Delta'_{E'})\rightarrow Z$.

			By the canonical bundle formula, there is a divisor $B_Z$ supported on $B$ such that
			\begin{equation}
				\label{equation 3}
				K_{X'}+\Delta'+h^*D\sim_{\mathbb{Q}} h^*(K_Z+B_Z+\M_Z+D),
			\end{equation}
			and
			\begin{equation}
				\label{equation 4}
				K_X+\Delta+f^*D\sim_{\mathbb{Q}} f^*(K_Z+B_Z+\M_Z+D).
			\end{equation}
			
			Because $(Z,B+D)$ is log smooth, it is an lc pair. By Theorem \ref{canonical bundle formula} (c), $\Delta_v'+h^*D+h^*(B-B_Z)\leq \mathrm{red}(h^*(B+D))$. Since $(X',\Delta'+h^*(B+D))$ is log smooth, $(X',\Delta'+h^*D+h^*(B-B_Z))$ is sub-lc. 
			Because $Z$ is smooth, $B-B_Z$ is $\mathbb{Q}$-Cartier, after replacing $\Delta'$ by $\Delta'+h^*(B-B_Z)$, $\Delta$ by $\Delta+f^*(B-B_Z)$ and $B_Z$ by $B_Z+(B-B_Z)=B$, we can assume that $B=B_Z$ and every irreducible component of $B$ is dominated by an irreducible component of $\Delta'$ which has coefficient 1. Since $K_{X'}+\Delta'+h^*D\sim_{\mathbb{Q}}g^*( K_X+\Delta+f^*D)$, $(X,\Delta+f^*D)$ is log canonical.
			
			Let $g(E')=E$, suppose $h^*D=E'+E'_1,f^*D=E+E_1$. Restricting (\ref{equation 3}) to $E'$ and (\ref{equation 4}) to $E$, by the adjunction formula, there is a $\mathbb{Q}$-divisor $ \Delta'_{E'}$ and an effective $\mathbb{Q}$-divisor $\Delta_{E^n}$, such that
			$$(K_{X'}+\Delta'+h^*D)|_{E'}\sim_{\mathbb{Q}}K_{E'}+\Delta'_{E'}\sim_{\mathbb{Q}} h_{E'}^*(K_D+B|_D+\M_Z|_D),$$
			$$(K_X+\Delta+f^*D)|_{E^n}\sim_{\mathbb{Q}}K_{E^n}+\Delta_{E^n}\sim_{\mathbb{Q}} f_{E}^*(K_D+B|_D+\M_Z|_D),$$
			where $E^n$ is the normalization of $E$. It is easy to see that $ \Delta'_{E'}=\Delta'|_{E'} +E'_1|_{E'}$, $(E',\Delta'_{E'})$ is sub-lc, $\Delta_{E^n}$ is effective and $K_{E'}+\Delta'_{E'}\sim_{\mathbb{Q}} g_{E'}^*(K_{E^n}+\Delta_{E^n})$, where $g_{E'}:E'\rightarrow E^n$ is the birational morphism induced by $g|_{E'}:E'\rightarrow E$. It is easy to see $\Delta'_{E',\leq 0}$ is $g_{E'}$-exceptional, then $(E',\Delta'_{E'})\rightarrow D$ is an lc-trivial fibration.
			
			$$\xymatrix{
				(E',\Delta'_{E'}) \ar@{^{(}->}[r] \ar[d]^{g'_{E'}}&    (X',\Delta') \ar[d]_{g} \ar@/^2pc/[dd]^{h}\\
				(E^n,\Delta_{E^n}) \ar@{^{(}->}[r] \ar[d]^{f_E}  &   (X,\Delta)  \ar[d]_{f}\\
				D \ar@{^{(}->}[r]&    Z	 
			} $$
			
			By the canonical bundle formula for $(E',\Delta'_{E'})\rightarrow D$, we have
			\begin{equation}
				K_{E'}+\Delta'_{E'}\sim_{\mathbb{Q}} h_{E'} ^*(K_D+B_D+\M_D).
			\end{equation}
			To prove $\M_D=\M_Z|_D$, we only need to prove that $B_D\sim_{\mathbb{Q}} B|_D$.
			
			Since the morphism $X'\rightarrow Z$ and $\Delta',B$ satisfy the normal crossing assumption, $\M_Z$ descends on $Z$. Similarly, because $B+D$ is snc, $(D,B|_D)$ is log smooth and $(E',\Delta'_{E'})$ is log smooth over $D\setminus B\cap D$, $\M_D$ descends on $D$. For the same reason, the morphism $E'\rightarrow D$ and $\Delta'_{E'}, B|_D$ satisfy the standard normal crossing assumption. By the construction of the boundary divisor, $B_D$ is the unique smallest $\mathbb{Q}$-divisor supported on $B|_D$ such that
			$$\Delta'_{E',v} +h_{E'}^*(B|_D-B_D)\leq \mathrm{red}(h_{E'}^*(B|_D)),$$
			where $\Delta'_{E',v}$ is the vertical part of $\Delta'_{E'}$.
			Because every irreducible component of $B$ is dominated by an irreducible component of $\Delta'$ which has coefficient 1, every irreducible component of $B|_D$ is dominated by an irreducible component of $\Delta'_{E'}=\Delta'|_{E'} +E'_1|_{E'}$ which has coefficient 1, then $B|_D=B_D$ and the result follows.
		\end{proof}
	\end{thm}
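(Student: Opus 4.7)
The plan is to transfer everything to the log resolution $X'$, where the standard normal crossing assumption lets us apply the canonical bundle formula in its sharpest form (Theorem \ref{canonical bundle formula}(c)), then restrict to the component of $h^*D$ dominating $D$ and use adjunction together with the uniqueness characterization of the boundary divisor to identify both $B_D$ and $\M_D$ with the expected restrictions.

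First, I would perform a further sequence of blow-ups along smooth subvarieties inside the singular locus of $h^{-1}(D)$ (which is contained in $h^{-1}(B)\cap h^{-1}(D)$) so that $(X',\Delta'+h^*(B+D))$ becomes log smooth and the standard normal crossing assumption for $h\colon X'\to Z$ and $\Delta',B$ is preserved. Then the canonical bundle formula on $X'$ gives a $\mathbb{Q}$-divisor $B_Z$ supported on $B$ with
\[
K_{X'}+\Delta'+h^*D\sim_{\mathbb{Q}} h^*(K_Z+B_Z+\M_Z+D),
\]
and $\M_Z$ descends on $Z$. Using Theorem \ref{canonical bundle formula}(c), I would replace $\Delta'$ by $\Delta'+h^*(B-B_Z)$ (and correspondingly $\Delta$ by $\Delta+f^*(B-B_Z)$, $B_Z$ by $B$), which keeps $(X',\Delta'+h^*(B+D))$ log smooth and guarantees that every irreducible component of $B$ is dominated by a coefficient-$1$ component of $\Delta'$. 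After this normalization, $(X,\Delta+f^*D)$ is lc.

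Next, I would reduce from $Y$ to the component $E'$ of $h^*D$ that dominates $D$: since $(E',\Delta'_{E'})$ and $(Y,\Delta_Y)$ share the same generic fiber over $D$, their moduli $\mathbf{b}$-divisors coincide, so it suffices to prove the result for $E'\to D$. Writing $h^*D=E'+E'_1$ and applying adjunction along $E'$ yields
\[
K_{E'}+\Delta'_{E'}\sim_{\mathbb{Q}} h_{E'}^{*}(K_D+B|_D+\M_Z|_D),\qquad \Delta'_{E'}=\Delta'|_{E'}+E'_1|_{E'}.
\]
The push-down $\Delta_{E^n}$ on the normalization $E^n$ of $E=g(E')$ is effective by the log canonicity of $(X,\Delta+f^*D)$, so the non-positive part of $\Delta'_{E'}$ is $g_{E'}$-exceptional. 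This verifies that $(E',\Delta'_{E'})\to D$ is an lc-trivial fibration.

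Finally, applying the canonical bundle formula to $(E',\Delta'_{E'})\to D$ produces a boundary $B_D$ and moduli part $\M_D$ with $K_{E'}+\Delta'_{E'}\sim_{\mathbb{Q}} h_{E'}^{*}(K_D+B_D+\M_D)$. The key observation is that the standard normal crossing assumption for $X'\to Z$ restricts to one for $E'\to D$ with boundary $\Delta'_{E'}$ and discriminant locus $B|_D$, because $(D,B|_D)$ is log smooth and $(E',\Delta'_{E'})$ is relatively snc over $D\setminus B|_D$. Hence Theorem \ref{canonical bundle formula}(c) applies again and pins down $B_D$ as the unique smallest $\mathbb{Q}$-divisor supported on $B|_D$ with $\Delta'_{E',v}+h_{E'}^{*}(B|_D-B_D)\leq \mathrm{red}(h_{E'}^{*}B|_D)$. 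Since every component of $B$ was arranged to be dominated by a coefficient-$1$ component of $\Delta'$, the same holds for $B|_D$ and $\Delta'_{E'}$, forcing $B_D=B|_D$ and therefore $\M_D=\M_Z|_D$.

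The main obstacle I anticipate is the compatibility step: guaranteeing that after the preliminary blow-ups and the $(B-B_Z)$-absorption trick the standard normal crossing assumption is still in force on $X'\to Z$ \emph{and} restricts correctly to $E'\to D$, since both invocations of the sharp form of the canonical bundle formula depend on this. Everything else is bookkeeping via adjunction and the uniqueness of $B_Z$, $B_D$.
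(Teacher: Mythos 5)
Your proposal follows the paper's own argument essentially step for step: the same preliminary blow-ups preserving the standard normal crossing assumption, the same reduction from $Y$ to the component $E'$ of $h^*D$ via the common generic fiber, the same absorption of $B-B_Z$ using Theorem \ref{canonical bundle formula}(c) to arrange $B_Z=B$ with each component of $B$ dominated by a coefficient-$1$ component of $\Delta'$, and the same adjunction plus uniqueness-of-$B_D$ argument forcing $B_D=B|_D$ and hence $\M_D=\M_Z|_D$. It is correct and not a genuinely different route.
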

	
	\begin{thm}[{\cite[Theorem 3.3]{Amb05}}]
		\label{moduli part is nef and good}
		Let $f:(X,\Delta)\rightarrow S$ be an lc-trivial fibration such that the generic fiber $X_{\eta}$ is projective variety and $\Delta_{\bar{\eta}}$ is effective, then there exists a diagram
		$$\xymatrix{
			(X,\Delta)  \ar[d] _{f}& &   (X^!,\Delta^!) \ar[d]_{f^!} &\\
			S 	\ar@/_1pc/[rrr] _\Phi &  \bar{S}\ar[l]_{\tau} \ar[r]_{\rho}& 	 S^! \ar@/_1pc/@{-->}[ll]_i  \ar[r]^{\pi} & S^*
		} $$
		satisfying the following properties:
		\begin{enumerate}
			\item[(1)] $f^!:(X^!,\Delta^!)\rightarrow S^!$ is an lc-trivial fibration.
			\item[(2)] $\tau$ and $\pi$ are generically finite and surjective morphisms, $\rho$ is surjective.
			\item[(3)] There exists a nonempty open subset $U\subset \bar{S}$ and an isomorphism
			$$\xymatrix{
				(X,\Delta)\times_S \bar{S}|_U  \ar[rr] _{\cong}\ar[dr]& &   (X^!,\Delta^!)\times_{S^!} \bar{S}|_U \ar[dl]\\
				&  U& 	 
			} $$
			\item[(4)] Let $\M$, $\M^!$ be the corresponding moduli-$\bm{b}$-divisors, write $\rho:=\Phi\circ \tau$, then $\M^!$ is $\bm{b}$-nef and big. And if $\M$ descends on $S$ and $\M^!$ descends on $S^!$, then $\tau^*\M_S=\rho^*\M_{S^!}^!$. 
			\item[(5)] There is a morphism $\Phi:S\rightarrow S^*$, which is an extension of the period map defined in \cite[Chapter 2]{Amb05}, and a rational map $i:S^!\dashrightarrow S$.
		\end{enumerate}
		
		Although it is not written in \cite{Amb05}, (5) is implied by its proof.
	\end{thm}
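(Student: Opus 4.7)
Since the statement is attributed to \cite[Theorem 3.3]{Amb05}, the plan is to follow Ambro's Hodge-theoretic construction, and then to verify that property (5) emerges from the same argument even though it is not explicitly stated there. The key observation is that the moduli $\bm{b}$-divisor $\M$ of an lc-trivial fibration is governed, up to a rational multiple, by a canonical variation of Hodge structure attached to a smooth model of $f$, and therefore by a period map to an appropriate classifying space.

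First I would pass to a log resolution of $(X,\Delta)$ and perform a semistable reduction $\tau:\bar{S}\to S$ (e.g.\ via Abramovich--Karu) so that after base change the pulled-back family has unipotent monodromy at infinity and, after a cyclic cover to absorb the fractional coefficients of $\Delta$ along the generic fiber, carries a pure variation of Hodge structure on an open subset $U\subseteq\bar{S}$. Standard nilpotent-orbit theorems (Schmid, Cattani--Kaplan--Schmid) extend the period map to a morphism $\bar{S}\to S^{*}$, where $S^{*}$ is a compactified quotient of the relevant period domain by the (finite-index) monodromy; since the branch locus of $\tau$ has no effect on the moduli data, this descends to the desired $\Phi:S\to S^{*}$. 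I would then define $S^{!}$ as the normalization of the Stein factorization of $\bar{S}\to\mathrm{Image}(\Phi\circ\tau)\subseteq S^{*}$, which provides the surjection $\rho:\bar{S}\to S^{!}$ with connected fibers and the generically finite morphism $\pi:S^{!}\to S^{*}$.

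To produce $(X^{!},\Delta^{!})\to S^{!}$, descend $(X,\Delta)\times_{S}\bar{S}|_{U}$ along $\rho|_{U}$, which is legitimate because on $U$ two fibers are isomorphic precisely when they represent the same Hodge-theoretic datum (i.e.\ map to the same point of $S^{!}$), and then extend over all of $S^{!}$ by a relative log resolution. Property (3), the base-change isomorphism, is automatic, and (1)--(2) are built into the construction. For (4), $\bm{b}$-nefness of $\M^{!}$ is Fujita--Kawamata semi-positivity applied to the Hodge bundle on $S^{!}$ (in the form available for lc-trivial fibrations), while bigness follows because $\pi$ is generically finite onto its image, so the variation of Hodge structure has maximal variation on $S^{!}$. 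The identity $\tau^{*}\M_{S}=\rho^{*}\M^{!}_{S^{!}}$ reduces to Proposition \ref{moduli part stable under base change} after passing to a common birational model where both $\M$ and $\M^{!}$ descend. Property (5) comes essentially for free: $\Phi$ is the period map just constructed, and the rational map $i:S^{!}\dashrightarrow S$ arises by composing a generic (rational) section of the surjective morphism $\rho$ with $\tau$.

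The main obstacle will be verifying (4), specifically the \emph{bigness} of $\M^{!}$: this requires identifying $\M^{!}$ Hodge-theoretically with (a positive multiple of the determinant of) the Hodge bundle on $S^{!}$ and then exploiting the maximal-variation property guaranteed by generic finiteness of $\pi$. Nefness is classical by now, but the interplay between the semistable reduction $\tau$, the cyclic cover needed to integralize $\Delta$, and the behavior of $\M$ under pullback requires careful bookkeeping to ensure the Hodge-theoretic identifications are compatible with the canonical bundle formula applied directly on $X$; this is the technical heart of \cite{Amb05}. Once this is in place, property (5) and the compatibility with Proposition \ref{moduli part stable under base change} require only formal arguments.
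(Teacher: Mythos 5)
The paper does not actually give a proof of this statement: it simply cites Ambro's Theorem 3.3 for items (1)--(4) and records, in a single sentence, the observation that the period map $\Phi$ and the rational map $i$ appearing in (5) are already present in Ambro's construction even though they are not included in his statement. Your proposal instead reconstructs a reasonably faithful outline of Ambro's Hodge-theoretic argument (semistable reduction, cyclic cover to integralize $\Delta$ over the generic fiber, admissible VHS with unipotent monodromy, nilpotent-orbit extension of the period map, Stein factorization defining $S^!$, Fujita--Kawamata semipositivity for $\bm{b}$-nefness, generic finiteness of $\pi$ for bigness), and it correctly locates where $\Phi$ and $i$ come from. In that sense you have supplied the content that the paper chose to leave implicit.

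Two cautions on the details. First, the step where you ``descend $(X,\Delta)\times_S \bar{S}|_U$ along $\rho|_U$'' is presented as if it were formal, but the justification you give --- that two fibers are isomorphic precisely when they have the same Hodge-theoretic datum --- is a Torelli-type injectivity statement and is in fact the technical heart of Ambro's proof; moreover the correct notion there is $B$-birational equivalence of fibers, not isomorphism, and the descent of the fibration is extracted from the descent of the associated polarized VHS together with that injectivity. Second, your construction of $i:S^!\dashrightarrow S$ by ``composing a generic rational section of $\rho$ with $\tau$'' presupposes that $\rho$ admits a rational section, which you have not justified; the existence of $i$ in Ambro's setup comes from his explicit construction of $S^!$ rather than from an abstract section-existence argument, so one should appeal to that construction rather than assert a section. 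Neither of these affects the overall plan, but they are precisely the places where a reader would want a citation to the relevant portion of \cite{Amb05} rather than a heuristic.
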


	\begin{thm}\label{bound singular locus}
		Let $(\Cx_{\Ch},\alpha\Cd_{\Ch})\rightarrow \Ch$ be the lc-trivial fibration defined in Remark \ref{moduli map}. Because $\alpha$ is effective, by Theorem \ref{moduli part is nef and good}, we have the following diagram    
		$$\xymatrix{
			(\Cx_{\Ch},\alpha\Cd_{\Ch})\ar[d]^{\Cf} &          &(\Cx^!_{\Ch^!},\alpha\Cd^!_{\Ch^!}) \ar[d]^{\Cf^!}& \\
			\Ch \ar@/_1pc/[rrr] _\Phi &\bar{\Ch} \ar[l]^\tau \ar[r]_\rho  &\Ch^!  \ar[r]^\pi  \ar@/_1pc/@{-->}[ll]_i    & \Ch^*
		}$$
		Furthermore, by Lemma \ref{decompose to get weakly bounded}, we can replace $\Ch^*$ by a stratification and $\Ch,\bar{\Ch},\Ch^!$ by its preimage, such that 
		\begin{itemize}
			\item $\tau$ and $\pi$ are finite,
			\item $\Ch^*$ is weakly bounded and smooth, and
			\item $(\Cx_{\Ch},\Cd_{\Ch})\rightarrow \Ch$ and $(\Cx^!_{\Ch^!},\Cd^!_{\Ch^!})\rightarrow \Ch^!$ have fiberwise log resolutions.
		\end{itemize} 
		Then there exists a positive integer $l$, such that
		if $f:(X,\Delta)\rightarrow Z$ is an lc trivial fibration such that
		\begin{itemize}
			\item there is a rational map $\phi:Z\dashrightarrow \Ch$, and
			\item the generic fiber of $f$ is isomorphic to the generic fiber of the pullback of $(\Cx_{\Ch},\alpha \Cd_{\Ch})\rightarrow \Ch$ by $\phi$. 
		\end{itemize}
		Then we can choose the moduli part $\M_{Z}$ of $f$, such that $\M_{Z}$ is effective, $l\M_{Z}$ is $\mathbf{b}$-Cartier, and if $\Ch^*\hookrightarrow \Cs^*$ is a smooth compactification such that $\Phi\circ \phi$ extends to a morphism $Z\rightarrow \Cs^*$, then $\Supp(\M_{Z})\supset Z\setminus U$, where $U=\Phi\circ \phi^{-1}\Ch^*$.
		\begin{proof}
			Let $(\Cy_\Ch,\Cr_\Ch)\rightarrow (\Cx_{\Ch},\Cd_{\Ch}),(\Cy^!_{\Ch^!},\Cr^!_{\Ch^!})\rightarrow (\Cx^!_{\Ch^!},\Cd^!_{\Ch^!})$ be crepant birational morphisms which are fiberwise log resolution of $\Cf,\Cf^!$. After taking smooth compactifications of the bases $\Ch,\bar{\Ch},\Ch^!$ and $\Ch^*$, and choosing extensions of the fibrations, we have the following diagram,
			$$\xymatrix{
				(\Cy_{\Cs},\Cr_{\Cs})\ar[d]^{\Cf} &          &(\Cy^!_{\Cs^!},\Cr^!_{\Cs^!}) \ar[d]^{\Cf^!}& \\
				\Cs \ar@/_1pc/[rrr] _\Phi &\bar{\Cs} \ar[l]^\tau \ar[r]_\rho  &\Cs^!  \ar[r]^\pi  \ar@/_1pc/@{-->}[ll]_i    & \Cs^*
			}$$
			Furthermore, by choosing the compactification properly, we may assume that the moduli part $\bm{\Cm}^!_{\Cs^!}$ of $\mathcal{F}^!$ descends on $\Cs^!$ and the moduli part $\bm{\Cm}_{\Cs}$ of $\mathcal{F}$ descends on $\Cs$. By Theorem \ref{moduli part is nef and good}, we have $ \tau^* \bm{\Cm}_{\Cs}=\rho^*{\bm{\Cm}^!_{\Cs^!}}$. 
			
			Let $\mathcal{N}$ be a Cartier divisor on $\Cs^*$ such that $\Ch^*\subset \Cs^*\setminus \Supp \mathcal{N}$. Because $\bm{\Cm}^!_{\Cs^!}$ is big, we may fix a section of $|\bm{\Cm}^!_{\Cs^!}|_{\mathbb{Q}}$ such that $\Supp \bm{\Cm}^!_{\Cs^!} \supset \Supp \pi^* \mathcal{N}$, then $\Ch^!\subset \Cs^! \setminus \Supp \bm{\Cm}^!_{\Cs^!}$. Because $ \tau^* \bm{\Cm}_{\Cs}=\rho^*{\bm{\Cm}^!_{\Cs^!}}$, we can choose $\bm{\Cm}_{\Cs}$ such that $\tau(\Supp \rho^* \bm{\Cm}^!_{\Cs^!})\subset \Supp \bm{\Cm}_{\Cs}$.
			
			Let $h:Z'\rightarrow Z$ be a birational map, by the negativity lemma, $\M_{Z'}\leq f^* \M_Z$, then $\Supp \M_{Z'}\subset f^{-1}\Supp\M_Z$. To prove $\Supp (\M_Z)\supset Z\setminus U$, we only need to proved that $\Supp(\M_{Z'})\supset Z'\setminus h^{-1}(U)$, so we can replace $Z$ by a higher birational model such that $\M_Z$ descends on $Z$.
			
			Define $\mathcal{B}:=\Cs\setminus \Ch$. We have the following two cases:
			
			Case 1: if the generic point of $\phi(Z)$ is contained in $\Supp(\bm{\Cm}_{\Cs}+\mathcal{B})$. After passing to a stratification of $\Cs$, we may replace $(\Cy,\Cr)\rightarrow \Cs$ by its restriction to irreducible components of $\Supp(\bm{\Cm}_{\Cs}+\mathcal{B})$ and repeat this process. Since the dimension strictly decreases, this will stop.
			
			Case 2: if the generic point of $\phi(Z)$ is not contained in $\Supp(\bm{\Cm}_{\Cs}+\mathcal{B})$. We claim that $\phi^*\bm{\Cm}_{\Cs}\sim_{\mathbb{Q}}\M_Z$. Because 
			$\Supp \bm{\Cm}_{\Cs} \supset \tau(\Supp \rho^* \bm{\Cm}^!_{\Cs^!}) \supset \Supp \Phi^* \mathcal{N}$, $\Ch^*\subset \Cs^* \setminus \Supp \mathcal{N}$ and $\phi^{-1}(\Ch)=U$, we have $\Supp \M_Z \supset Z\setminus U$. Since $l\bm{\Cm}_{\Cs}$ is Cartier, $l\M_Z$ is also Cartier.
		\end{proof}
	\end{thm}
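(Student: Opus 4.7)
The plan is to transfer the structural properties of the moduli $\bm{b}$-divisor over the parameter space to $Z$ via the rational map $\phi$, exploiting bigness of the moduli part after Ambro's finite cover and the base-change invariance of the moduli $\bm{b}$-divisor (Proposition \ref{moduli part stable under base change}). First I would pick smooth projective compactifications $\Cs \supset \Ch$, $\bar{\Cs} \supset \bar{\Ch}$, $\Cs^! \supset \Ch^!$ and extensions $\Cf: (\Cy_\Cs, \Cr_\Cs) \to \Cs$, $\Cf^!: (\Cy^!_{\Cs^!}, \Cr^!_{\Cs^!}) \to \Cs^!$ of the two lc-trivial fibrations, arranged so that the moduli $\bm{b}$-divisors $\bm{\Cm}_\Cs$ and $\bm{\Cm}^!_{\Cs^!}$ both descend on these fixed models. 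Theorem \ref{moduli part is nef and good} then gives $\rho^*\bm{\Cm}^!_{\Cs^!} = \tau^*\bm{\Cm}_\Cs$ with $\bm{\Cm}^!_{\Cs^!}$ nef and big; a uniform Cartier index for $\bm{\Cm}^!_{\Cs^!}$, inflated by the degree of $\pi$ and $\tau$, produces the required integer $l$.

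Next I would build an effective representative of $\bm{\Cm}_\Cs$ whose support contains all of $\Cs \setminus \Ch$. Pick an effective Cartier divisor $\mathcal{N}$ on $\Cs^*$ with $\Ch^* \subset \Cs^* \setminus \Supp \mathcal{N}$. Because $\bm{\Cm}^!_{\Cs^!}$ is big and $\pi^*\mathcal{N}$ is a fixed effective divisor, for $m$ sufficiently divisible the linear system $|m\bm{\Cm}^!_{\Cs^!}|_{\mathbb{Q}}$ contains an effective member whose support dominates $\Supp \pi^*\mathcal{N}$, giving $\Ch^! \subset \Cs^! \setminus \Supp \bm{\Cm}^!_{\Cs^!}$. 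Transferring via $\tau^*\bm{\Cm}_\Cs = \rho^*\bm{\Cm}^!_{\Cs^!}$ and the finite maps produces an effective $\mathbb{Q}$-divisor representing $\bm{\Cm}_\Cs$ whose support contains $\tau(\rho^{-1}\Supp \pi^*\mathcal{N}) \supset \Cs \setminus \Ch$.

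Now for the given $f: (X,\Delta) \to Z$, after replacing $Z$ by a higher birational model on which $\M_Z$ descends (permissible since by the negativity lemma this only enlarges $\Supp \M_Z$), I would argue $\phi^*\bm{\Cm}_\Cs \sim_{\mathbb{Q}} \M_Z$ in the generic case. Indeed, by hypothesis the generic fiber of $f$ is isomorphic to the generic fiber of the pullback of $(\Cx_\Ch, \alpha\Cd_\Ch)$ via $\phi$, so the two lc-trivial fibrations are crepant birational over the generic point of $\phi(Z)$, and Proposition \ref{moduli part stable under base change}(1) applies. Pulling back the effective representative of $\bm{\Cm}_\Cs$ constructed above then yields the required effective $\M_Z$, and the inclusion $\Supp \M_Z \supset \phi^{-1}(\Cs \setminus \Ch) \supset Z \setminus U$ follows because $\Supp \bm{\Cm}_\Cs \supset \Cs \setminus \Ch$ contains $\Phi^{-1}(\Cs^* \setminus \Ch^*)$.

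The main obstacle is the degenerate case where $\phi(Z)$ lies entirely inside $\Supp \bm{\Cm}_\Cs \cup (\Cs \setminus \Ch)$, so the naive pullback $\phi^*\bm{\Cm}_\Cs$ has no well-defined meaning. I plan to resolve this by Noetherian induction on $\dim \Cs$: stratify $\Cs$ by the irreducible components of $\Supp \bm{\Cm}_\Cs \cup (\Cs \setminus \Ch)$, restrict $(\Cy_\Cs, \Cr_\Cs) \to \Cs$ to each such component (taking a fiberwise log resolution and forming the corresponding restricted Ambro diagram), and apply the conclusion inductively to the lower-dimensional strata. Uniformity of $l$ across all strata follows because the stratification is finite and each stratum contributes a bounded Cartier index for its own moduli $\bm{b}$-divisor; the termination of the induction is clear from the strict drop in $\dim \Cs$ at each stage.
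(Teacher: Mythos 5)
Your overall strategy coincides with the paper's: compactify so that both moduli parts descend on fixed models $\Cs,\Cs^!$, use bigness of $\bm{\Cm}^!_{\Cs^!}$ to fix an effective representative whose support contains $\Supp\pi^*\mathcal{N}$, transfer it to $\bm{\Cm}_{\Cs}$ via $\tau^*\bm{\Cm}_{\Cs}=\rho^*\bm{\Cm}^!_{\Cs^!}$, replace $Z$ by a higher model on which $\M_Z$ descends, and dispose of the locus $\Supp(\bm{\Cm}_{\Cs}+\mathcal{B})$ by Noetherian induction on strata. However, there is a genuine gap at the central claim $\phi^*\bm{\Cm}_{\Cs}\sim_{\mathbb{Q}}\M_Z$. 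Proposition \ref{moduli part stable under base change}(1) only governs base change along a \emph{surjective} morphism of bases, whereas in your ``generic case'' the image $\Cs_Z:=\overline{\phi(Z)}$ is in general a proper subvariety of $\Cs$ (being outside $\Supp(\bm{\Cm}_{\Cs}+\mathcal{B})$ does not make $\phi$ dominant). What base change gives you is only $\M_Z=\phi^*\M_{\Cs_Z}$, where $\M_{\Cs_Z}$ is the moduli part of the family \emph{restricted} over $\Cs_Z$; identifying $\M_{\Cs_Z}$ with the restriction $\bm{\Cm}_{\Cs}|_{\Cs_Z}$ is an adjunction-type statement for moduli $\bm{b}$-divisors, not a base-change statement, and it is exactly what your citation does not cover. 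Note also that your Noetherian induction cannot absorb this case: when $\phi(Z)$ is a positive-codimensional subvariety \emph{not} contained in $\Supp(\bm{\Cm}_{\Cs}+\mathcal{B})$, restricting the family to components of that locus never reaches it.

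The paper closes precisely this gap with Theorem \ref{restriction of moduli part is the moduli part of restriction}: one passes to a modification $\tilde{\Cs}\rightarrow\Cs$ carrying a divisor $D$ dominating $\Cs_Z$ with $(\tilde{\Cs},D+\mathcal{B}_{\tilde{\Cs}})$ log smooth; the case hypothesis (generic point of $\phi(Z)$ not in the bad locus) guarantees $D\not\subset\Supp(\mathcal{B}_{\tilde{\Cs}})$, so the restricted family over $D$ is again lc-trivial and its moduli part equals $\bm{\Cm}_{\tilde{\Cs}}|_D$; then Proposition \ref{moduli part stable under base change} applied to $D\rightarrow\Cs_Z$ yields $\M_D=g^*\M_{\Cs_Z}$ and hence $\M_Z=\phi^*\M_{\Cs_Z}=\phi^*\bm{\Cm}_{\Cs}$. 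You would need to add this restriction argument (or an equivalent one) for your proof to go through; the remaining points of your sketch, including obtaining $l$ from a uniform Cartier index for $\bm{\Cm}^!_{\Cs^!}$ inflated through the finite maps, are consistent with the paper's proof.
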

	\begin{proof}[Proof of claim]
		With the same notation as above. Let $\Cs_Z$ be the image of $Z$ in $\Cs$. Consider the following diagram
		$$\xymatrix@R=0.8em@C=1.5em{
			(\Cy_D,\Cr_D) \ar[dd] \ar[rr]	\ar[dr]	&			&	(\tilde{\Cy},\tilde{\Cr})	\ar[dd]\ar[dr]	&             \\
			&	(\Cy_{\Cs_Z},\Cr_{\Cs_Z})\ar[dd]\ar[rr]		&			&        (\Cy,\Cr)\ar[dd]     \\
			D	\ar@{^{(}->}[rr] _j \ar[dr]^g	&			&		\tilde{\Cs}\ar[dr]  ^h	&             \\
			&		{\Cs_Z}\ar@{^{(}->}[rr]	_j&			&      \Cs       \\
		} $$
		where 
		\begin{itemize}
			\item $D$ is a divisor on $\tilde{\Cs}$ dominates ${\Cs_Z}$,
			\item $(\tilde{\Cs},D+\mathcal{B}_{\tilde{\Cs}})$ is log smooth, where $\mathcal{B}_{\tilde{\Cs}}:=h^*\mathcal{B}$,
			\item $\tilde{\Cs}\rightarrow \Cs$ is a birational morphism, and
			\item $(\Cy_{\Cs_Z},\Cr_{\Cs_Z}) \rightarrow \Cs_Z$, $(\Cy_D,\Cr_D)\rightarrow D$ and $(\tilde{\Cy},\tilde{\Cr})\rightarrow \tilde{\Cs}$ are induced by pullback of $(\Cy,\Cr)\rightarrow \Cs$.
		\end{itemize}
		Then $h(\tilde{\Cs}\setminus \mathcal{B}_{\tilde{\Cs}})=\Ch$ and $(\tilde{\Cy},\Supp\tilde{\Cr})\rightarrow \tilde{\Cs}$ is log smooth over $\tilde{\Cs}\setminus  \mathcal{B}_{\tilde{\Cs}}$. 
		
		Because the generic fiber of $(X,\Delta)\rightarrow Z$ is crepant birationally equivalent to the generic fiber of the pullback of $(\Cy,\Cr)\rightarrow \Cs$ via $\phi$, it is also crepant birationally equivalent to the generic fiber of the pullback of $(\Cy_{\Cs_Z},\Cr_{\Cs_Z})\rightarrow \Cs_Z$, then by Theorem \ref{moduli part stable under base change}, the moduli part $\M_{\Cs_Z}$ of $(\Cy_{\Cs_Z},\Cr_{\Cs_Z})\rightarrow \Cs_Z$ satisfies $\M_Z=\phi^* \M_{\Cs_Z}$.
		
		Because the generic point of $\phi(Z)$ is in $\Ch$, then $D\not\subset \Supp(\mathcal{B}_{\tilde{\Cs}}) $, by Theorem \ref{restriction of moduli part is the moduli part of restriction}, the induced morphism $(\Cy_D,\Cr_D)\rightarrow D$ is an lc-trivial fibration, and the corresponding moduli divisor $\M_D$ is equal to $\bm{\Cm}_{\tilde{\Cs}}|_D$. By Theorem \ref{moduli part stable under base change}, $\M_D=g^*\M_{\Cs_Z}$, then $\M_Z=\phi ^*\M_{\Cs_Z}= \phi^*\bm{\Cm_S}$.
	\end{proof}
	
	\begin{thm}\label{getting bounded Ambro models}
		We use the same notation as in Theorem \ref{bound singular locus}.
		Suppose there is a family of smooth varieties $\mathcal{U}\rightarrow T$ and a morphism $\Theta:\mathcal{U} \rightarrow \Ch^*$, define $\bar{\Cu}:=\Cu\times_{\Ch^*}\Ch^!$.
		
		Then after passing to a stratification of $T$, there is a compactification of $\mathcal{U}\hookrightarrow \Cz/T$, such that, for any closed point $t\in T$, if $(X,\Delta)\rightarrow Z$ is an lc-trivial fibration, such that
		\begin{itemize}
			\item there is a birational morphism $Z\rightarrow \Cz_t$, and
			\item there exists a finite cover $V\rightarrow \bar{\Cu}_t$ such that the generic fiber of $(X,\Delta)\times_{\Cz_t}V\rightarrow V$ is crepant birationally equivalent to the generic fiber of $(\Cy^!_{\Ch^!},\Cr^!_{\Ch^!})\times_{\Ch^!}V\rightarrow V$,
		\end{itemize}
		then the moduli part of $(X,\Delta)\rightarrow Z$ descends on $\Cz_t$.
		\begin{proof}
			After replacing $\Cu,\Ch^*,\Ch^!$ by open subsets, we may assume that $\Ch^!\rightarrow \Ch^*$ is \'etale and $\bar{\Cu}\rightarrow T$ is a family of smooth varieties. Then $\bar{\Cu}\rightarrow \Cu$ is \'etale, we may let $K(\tilde{\Cu})/K(\Cu)$ be the Galois closure of $K(\bar{\Cu})/K(\Cu)$, and $\tilde{\Cu}\rightarrow \Cu$ be the Galois cover with group $G$. After replacing $\Cu$ by an open subset and passing $T$ to a stratification, we may assume that $\tilde{\Cu}_t\rightarrow \Cu_t$ is \'etale between smooth varieties for every closed point $t\in T$.
			
			Suppose $\tilde{\Cu}\hookrightarrow \tilde{\Cz}'$ is an extension, let $\tilde{\Cz}\rightarrow \tilde{\Cz}'$ be a $G$-equivariant log resolution of $(\tilde{\Cz}',\tilde{\Cz}'\setminus \tilde{\Cu})$. Let $\Cz$ be the quotient of $\tilde{\Cz}$ by $G$. Next, we show that $\Cz$ satisfies the requirements.
			
			Write $(\Cy^!_{\tilde{\Cu}_t},\Cr^!_{\tilde{\Cu}_t}):=(\Cy^!_{\Ch^!},\Cr^!_{\Ch^!})\times _{\Ch^!}\tilde{\Cu}_t$, it is easy to see $(\Cy^!_{\tilde{\Cu}_t},\Supp\Cr^!_{\tilde{\Cu}_t})$ is log smooth over $ \tilde{\Cu}_t$. Because $(\tilde{\Cz}_t,\tilde{\Cz}_t\setminus \tilde{\Cu}_t)$ is log smooth, by \ref{standard normal crossing}, the moduli part of $(\Cy^!_{\tilde{\Cu}_t},\Cr^!_{\tilde{\Cu}_t})\rightarrow \tilde{\Cu}_t$ descends on $\tilde{\Cz}_t$, denote it by $\tilde{\bm{M}}_{\tilde{\Cz}_t}$. 
			
			Suppose $(X,\Delta)\rightarrow Z$ is an lc-trivial fibration that satisfies the conditions, denote its moduli part by $\bm{M}_{\Cz_t}$.
			Because $V\rightarrow \bar{\Cu}_t\rightarrow \Cu_t$ is finite, we can choose a compactification $V\hookrightarrow W$ such that there is a finite morphism $ W\rightarrow \Tilde{\Cz_t}$. By assumption, the generic fiber of $(X,\Delta)\times_{\Cz_t} V\rightarrow V$ is crepant birationally equivalent to the generic fiber of $(\Cy^!_{\Ch^!},\Cr^!_{\Ch^!})\times_{\Ch^!}V\rightarrow V$.
			Because the moduli part only depends on the crepant birational equivalent class of generic fiber, by Proposition \ref{moduli part stable under base change}, $\M$ descends on $\Cz_t$ if and only if the moduli part of $(X,\Delta)\times_{\Cz_t} V\rightarrow V$ descends on $W$ if and only if the moduli part of $(\Cy^!_{\tilde{\Cu}_t},\Cr^!_{\tilde{\Cu}_t})\rightarrow \tilde{\Cu}_t$ descends on $\tilde{\Cz}_t$.
		\end{proof}
	\end{thm}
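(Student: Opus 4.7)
The plan is to construct $\Cz$ as the quotient, by a finite group $G$, of a $G$-equivariant log smooth compactification $\tilde{\Cz}$ of a Galois cover $\tilde{\Cu}$ of $\Cu$. The point is that on $\tilde{\Cz}_t$ the moduli part of the universal family descends by the standard normal crossing criterion in Theorem \ref{canonical bundle formula}(a), and descent can be transported down to $\Cz_t$ using finite-base-change invariance of the moduli $\bm{b}$-divisor (Proposition \ref{moduli part stable under base change}(2)).

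First, I would shrink $\Cu,\Ch^*,\Ch^!$ and stratify $T$ so that $\Ch^!\to\Ch^*$ is \'etale, $\bar{\Cu}\to T$ is smooth, and hence $\bar{\Cu}\to\Cu$ is \'etale. Let $\tilde{\Cu}\to\Cu$ be the Galois closure of $\bar{\Cu}/\Cu$, with Galois group $G$; after further stratifying $T$ we may assume $\tilde{\Cu}_t\to\Cu_t$ is \'etale between smooth varieties for every closed $t$. Choose a $G$-equivariant projective compactification $\tilde{\Cu}\hookrightarrow\tilde{\Cz}'$, take a $G$-equivariant log resolution $\tilde{\Cz}\to\tilde{\Cz}'$ of $(\tilde{\Cz}',\tilde{\Cz}'\setminus\tilde{\Cu})$, and set $\Cz:=\tilde{\Cz}/G$ over $T$.

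The pullback $(\Cy^!_{\Ch^!},\Cr^!_{\Ch^!})\times_{\Ch^!}\tilde{\Cu}_t$ is log smooth over $\tilde{\Cu}_t$ (by \'etaleness of the base change), and since $\tilde{\Cz}_t$ is smooth with $\tilde{\Cz}_t\setminus\tilde{\Cu}_t$ an snc divisor, the standard normal crossing assumption holds; hence by Theorem \ref{canonical bundle formula}(a) the corresponding moduli part descends on $\tilde{\Cz}_t$. Now take any $(X,\Delta)\to Z$ satisfying the hypothesis with finite cover $V\to\bar{\Cu}_t$. Since $V\to\Cu_t$ is finite, I can compactify $V$ to some $W$ admitting a finite morphism $W\to\tilde{\Cz}_t$ (e.g.\ by taking an irreducible component of $V\times_{\bar{\Cu}_t}\tilde{\Cu}_t$ and normalizing $\tilde{\Cz}_t$ in its function field). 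By hypothesis the generic fiber of $(X,\Delta)\times_{\Cz_t}V$ is crepant birationally equivalent, over $V$, to that of the pulled-back universal family; since the moduli $\bm{b}$-divisor depends only on the crepant birational class of the generic fiber, the two lc-trivial fibrations over $W$ carry the same moduli $\bm{b}$-divisor, which descends on $W$ by Proposition \ref{moduli part stable under base change}(1). Finally, Proposition \ref{moduli part stable under base change}(2) applied to the finite composite $W\to\tilde{\Cz}_t\to\Cz_t$ yields that the moduli part of $(X,\Delta)\to Z$ descends on $\Cz_t$.

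The main obstacle is that the cover $V$ in the hypothesis is a priori arbitrary and varies with $(X,\Delta)\to Z$; the Galois closure $\tilde{\Cu}/\Cu$ is the device that makes a single compactification $\Cz$ work for all such $V$ simultaneously, because any such $V$ can, after base change, be dominated by the single Galois cover $\tilde{\Cu}_t$. The $G$-equivariance of the log resolution is essential in order to descend from $\tilde{\Cz}$ to the quotient $\Cz$ while preserving finiteness of the relevant morphisms, so that Proposition \ref{moduli part stable under base change}(2) applies.
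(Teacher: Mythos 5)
Your proposal follows essentially the same route as the paper: Galois closure $\tilde{\Cu}\rightarrow\Cu$, a $G$-equivariant log resolution of a compactification with $\Cz:=\tilde{\Cz}/G$, descent of the moduli part on $\tilde{\Cz}_t$ via the standard normal crossing criterion, and transfer of descent through finite covers using Proposition \ref{moduli part stable under base change}. Your explicit construction of $W$ (a component of $V\times_{\bar{\Cu}_t}\tilde{\Cu}_t$, then normalizing $\tilde{\Cz}_t$ in its function field) just makes precise a step the paper leaves implicit, so the argument is correct.
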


	\section{Proof of the Main Theorems}
	
	\begin{proof}[Proof of Theorem \ref{fibers in a good moduli space}]
		We use the same notation as in Remark \ref{moduli map} and Theorem \ref{bound singular locus}.
		
		Let $C>0$ be a fixed number, to prove the DCC=, we only need to prove that if $\Ivol(K_X+\Delta)\leq C$, then $\Ivol(K_X+\Delta)$ is in a DCC set. By Theorem \ref{canonical bundle formula on fibration}, we can construct a generalized pair $(Z',B_{Z'}+\M_{Z'})$ and birational morphism $Z'\rightarrow Z$, such that $\mathrm{coeff}(B_{Z'})$ belongs to a DCC set $\Lambda$, $\Ivol(K_X+\Delta)=\mathrm{vol}(K_{Z'}+B_{Z'}+\M_{Z'})$ and $\M_{Z'}$ is the moduli part of $f$. After replacing $Z$ by $Z'$, $B_{Z'}$ and $\M_{Z'}$ by $B_{Z}$ and $\M_{Z}$, we only need to prove that $\mathrm{vol}(K_{Z}+B_{Z}+\M_{Z})$ belongs to a DCC set. 
		
		By assumption, since the general fiber of $(X_g,\Delta_g,L_g)$ is in $\mathscr{C}$, there is an open subset $U\hookrightarrow Z$ such that $(X_U,\Delta|_{X_U})$ is crepant birationally equivalent to the pullback of $(\Cy_{\Ch},\Cr_{\Ch})\rightarrow \Ch$ by a morphism $U\rightarrow \Ch$. After resolving the indeterminacy of $Z\dashrightarrow \Cs\dashrightarrow \Cs^*$, we may assume that there is a morphism $\phi: Z\rightarrow \Cs^*$. 
		
		By the proof of Theorem \ref{boundedness of Ambro model}, after replacing $Z$ by a birational model, there is a birational contraction $g:Z\rightarrow W$ and a very ample divisor $A$ on $W$ such that
		\begin{enumerate}
			\item $W$ is smooth
			\item $g^*A+F\sim r(K_{Z}+B_{Z}+(2d+1)l\M_{Z})$ for an effective $\mathbb{Q}$-divisor $F\geq 0$.
		\end{enumerate}
		We increase $l$ by 1 and assume that $\Supp(\M_{Z})\subset \Supp(F)$.
		
		Next, we construct a birational open subset of $Z$ which maps into $\Ch^*$ via $\phi$ and is in a bounded family.
		
		Let $Z\dashrightarrow Z_c$ be the canonical model of $K_{Z}+B_{Z}+(2d+1)l\M_{Z}+(2d+1)\phi^*H+(2d+1)g^*A$. By \cite[Lemma 4.4]{BZ16}, $Z\dashrightarrow Z_c$ is $\M_{Z}$, $g^*A$ and $\phi^*H$-trivial. Then there are two morphisms, $g':Z_c\rightarrow W$ and $\phi':Z_c\rightarrow \Cs^*$. Let $B_{Z_c}, \M_{Z_c}$ and $F_c$ be the push-forward of $B_{Z}, \M_{Z}$ and $F$ on $Z_c$, then $K_{Z_c}+B_{Z_c}+(2d+1)l\M_{Z_c}+(2d+1)\phi'^*H+(2d+1)g'^*A$ is ample, $l\M_{Z_c}$ is nef and Cartier. Because $K_{Z_c}+B_{Z_c}+(2d+1)l\M_{Z_c} \sim_{\mathbb{Q}} \frac{1}{r}(g'^*A+F_c)$, then $\frac{1}{r}(g'^*A+F_c) +(2d+1)\phi'^*H+(2d+1)g'^*A$ is ample, denote it by $\mathcal{A}$, clearly $\mathcal{A}$ is effective.
		$$\xymatrix{
			& Z \ar[dl]_{g} \ar[dr]^{\phi}  \ar@{-->}[d]&    \\
			W& Z_c\ar[r] ^{\phi'} \ar[l]_{g'} & 	 \Cs^*
		} $$
		Because $\mathrm{coeff}(B_{Z})$ is in a DCC set $\Lambda$, $Z$ is smooth and $r(K_{Z}+B_{Z}+(2d+1)l\M_{Z}) \sim g^*A+F,$ $\{r(K_{Z}+B_{Z}+(2d+1)l\M_{Z}) \}=\{ rB_{Z}\}=\{F \}$, then $\mathrm{coeff}(F)$ is in a DCC set $\Lambda'=\Lambda'(\Lambda,r)$, in particular, there is a positive number $\delta$ such that $\mathrm{coeff}(F)>\delta$.
		
		\textbf{Claim}. $(W,\Supp(g'_*\mathcal{A}))$, which is equal to $ (W,\Supp(A+ g'_*(\phi'^*H+F_c)))$, is log bounded.

		Because $\mathcal{A}$ is ample, $W$ is smooth, then $g'(\Supp\mathcal{A})$ is pure of codimension 1, and $g'(\Supp\mathcal{A})=\Supp(g'_*\mathcal{A})$. By the negativity lemma, $\mathcal{A}=g'^*g'_*\mathcal{A}-\mathcal{E}$, where $\mathcal{E}$ is an effective exceptional $\mathbb{Q}$-divisor such that $\Supp(\mathcal{E})=\mathrm{Exc}(g')$. Because $\mathcal{A}\geq 0$, we have that $\mathrm{Exc}(g')\subset \Supp(g'^*g'_*\mathcal{A})$ and
		$$W\setminus \Supp(g'_*\mathcal{A})\cong Z_c\setminus \Supp(g'^*g'_*\mathcal{A}).$$
		
		By Theorem \ref{bound singular locus}, $\phi(Z\setminus \Supp \M_Z)\subset \Ch^*$. Because $ \Supp(\M_{Z})\subset \Supp(F)$ and $\phi(Z\setminus \Supp(\M_{Z}))\subset \Ch^*$, then $\Supp(\M_{Z_c})\subset \Supp(F_c)\subset \Supp(\mathcal{A})$ and $\phi'(Z_c \setminus \Supp(\M_{Z_c}))\subset \Ch^*$. Let 
		$$U_c:=Z_c\setminus \Supp(g'^*g'_*\mathcal{A})=W\setminus \Supp(g'_*\mathcal{A}),$$ 
		It is easy to see that $U_c\subset Z_c\setminus \Supp \M_{Z_c}$ and $\phi'(U_c)\subset \Ch^*$.
		
		Because $(W,g'_*\mathcal{A})$ is log bounded, there is a family of variety $\Cu\rightarrow T$ over a scheme of finite type $T$ and a closed point $t\in T$ such that 
		$$\Cu_t\cong W\setminus g'_*\mathcal{A}\cong U_c.$$
		Because $\Ch^*$ is weakly bounded, by Theorem \ref{weakly bounded morphisms are bounded}, there exists a finite type scheme $\mathscr{W}$ and a morphism $\Theta:\mathscr{W}\times \Cu\rightarrow \Ch^*$ such that $\phi'=\Theta|_{\{p\}\times \Cu_t}$ for a closed point $p\in\mathscr{W}$. We replace $\Cu\rightarrow T$ by $\mathscr{W}\times \Cu\rightarrow \mathscr{W}\times T$. 
		
		Let $V:=U\times_{\Ch}\bar{\Ch}$, then $V\rightarrow U$ is a finite cover. By Theorem \ref{moduli part is nef and good}, the generic fiber of $(X,\Delta)\times_Z V\rightarrow V$ is crepant birationally equivalent to the generic fiber of $(\Cy^!_{\Ch^!},\Cr^!_{\Ch^!})\times_{\Ch^!} V\rightarrow V$. Then by Theorem \ref{getting bounded Ambro models}, there is a compactification $\Cu\hookrightarrow \Cz /T$ such that the moduli part of $(X,\Delta)$ descends on $\Cz_t$. Therefore, by the proof of Theorem \ref{key theorem} and Theorem \ref{key theorem 2}, conclusions (i) and (ii) follow.
	\end{proof}
	\begin{proof}[Proof of claim]
		With the same notation as in the proof of Theorem \ref{fibers in a good moduli space}. Because $A$ and $H$ are integral divisors, $\mathrm{coeff}(F_c)$ is bounded from above and $A$ is very ample on $W$, by the boundedness of the Chow variety, we only need to prove that the following three intersection numbers, $A^d$, $A^{d-1}. g'_*\phi'^*H$ and $A^{d-1}. g'_*F_c$ are bounded from above.
		
		First we prove that there is a constant $C_1$ such that $A^d\leq C_1$. By Theorem \ref{pseudo eff threshold}, there is a rational number $e\in (0,1)$ such that $K_{Z}+B_{Z}+e\M_{Z}$ is big. By the log-concavity of the volume function, we have that
		\begin{equation}
			\begin{aligned}
				\mathrm{vol}(K_{Z}+B_{Z}+\M_{Z}) &\geq \lambda ^d\mathrm{vol}(K_{Z}+B_{Z}+e\M_{Z})\\
				&+(1-\lambda)^d\mathrm{vol}(K_{Z}+B_{Z}+(2d+1)l\M_{Z}),
			\end{aligned}
		\end{equation}
		where $\lambda =\frac{(2d+1)l-1}{(2d+1)l-e}<1$. By assumption, $\mathrm{vol}(K_{Z}+B_{Z}+\M_{Z})\leq C$,  then $\mathrm{vol}(K_{Z}+B_{Z}+(2d+1)l\M_{Z})\leq \frac{C}{(1-\lambda)^d}$ and $A^d\leq \mathrm{vol}(Z,r(K_{Z}+B_{Z}+(2d+1)l\M_{Z}))\leq \frac{r^dC}{(1-\lambda)^d}$.
		
		Secondly, we prove that $A^{d-1}. g'_*\phi'^*H$ is bounded from above, which is equivalent to proving that $A^{d-1}. g_*\phi^*H$ is bounded from above. The idea is to show that $A^{d-1}. g_*\phi^*H$ is equal to the degree of a line bundle on a $(g,m)$-curve, with $g+m$ bounded, then apply weak boundedness. 
		
		Let $A_1,...,A_{d-1}\in |g^*A|$ be $d-1$ general member of the linear system, because $g^*A$ is base point free, the $\Supp(A_i)$ are smooth and intersect along a smooth curve $C$. By the adjunction formula, 
		$$(g^*A)^{d-1}.(K_{Z}+B_{Z}+(2d+1)l\M_{Z}+(d-1)g^*A)=\mathrm{deg}(K_C+B_Z|_C+(2d+1)l\M_Z|_C).$$
		
		Consider the following diagram
		$$\xymatrix{
			& Z \ar[dl]_{g}   \ar@{-->}[d]&  \tilde{Z} \ar[l]_{h} \ar[ld]^{h_1}   \\
			W& Z_1 \ar[l]^{g_1} & 	 
		} $$
		where $Z_1$ is the canonical model of $K_{Z}+B_{Z}+(2d+1)l\M_{Z}+(2d+1)g^*A$ and $\tilde{Z}$ is a common resolution of $Z\dashrightarrow Z_1$. By \cite[Lemma 4.4]{BZ16}, $Z\dashrightarrow Z_1$ is $g^*A$-trivial, so there is a birational morphism $g_1:Z_1\rightarrow W$. By the projection formula,
		\begin{equation}\nonumber
			\begin{aligned}
				& (g^*A)^{d-1}.(K_{Z}+B_{Z}+(2d+1)l\M_{Z}+(2d+1)g^*A) \\
				=& (h^* g^*A)^{d-1}.(h^*(K_{Z}+B_{Z}+(2d+1)l\M_{Z}+(2d+1)g^*A)) \\
				=& (g_1 ^*A)^{d-1}.(h_{1*}h^*(K_{Z}+B_{Z}+(2d+1)l\M_{Z}+(2d+1)g^*A))\\
				=&(g_1 ^*A)^{d-1}.(K_{Z_1}+B_{Z_1}+(2d+1)l\M_{Z_1}+(2d+1)g_1^*A),
			\end{aligned}
		\end{equation}
		where $B_{Z_1}$ and $\M_{Z_1}$ are the push forwards of $B_{Z}$ and $\M_{Z}$. Since $Z_1$ is the canonical model of $K_{Z}+B_{Z}+(2d+1)l\M_{Z}+(2d+1)g^*A$,  $K_{Z_1}+B_{Z_1}+(2d+1)l\M_{Z_1}+(2d+1)g_1^*A$ is ample.
		Because $g_1^*A$ and $K_{Z_1}+B_{Z_1}+(2d+1)l\M_{Z_1}+(2d+1)g_1^*A$ are both nef, it is easy to see that
		\begin{equation}\nonumber
			\begin{aligned}
				&(g_1 ^*A)^{d-1}.(K_{Z_1}+B_{Z_1}+(2d+1)l\M_{Z_1}+(2d+1)g_1^*A)\\
				\leq& (K_{Z_1}+B_{Z_1}+(2d+1)l\M_{Z_1}+(2d+1)g_1^*A+g_1^*A)^d\\
				=&\mathrm{vol}(K_{Z_1}+B_{Z_1}+(2d+1)l\M_{Z_1}+(2d+2)g_1^*A).
			\end{aligned}
		\end{equation}
		Since $Z\dashrightarrow Z_1$ is $g^*A$-trivial, $Z_1$ is also the canonical model of $K_Z+B_Z+(2d+1)l\M_Z+(2d+2)g^*A$, then
		\begin{equation}\nonumber
			\begin{aligned}
				& \mathrm{vol}(K_{Z_1}+B_{Z_1}+(2d+1)l\M_{Z_1}+(2d+2)g_1^*A) \\
				=& \mathrm{vol}(K_{Z}+B_{Z}+(2d+1)l\M_{Z}+(2d+2)g^*A) \\
				\leq& \mathrm{vol}(K_{Z}+B_{Z}+(2d+1)l\M_{Z}+(2d+2)(g^*A+F))\\
				=&\mathrm{vol}((1+(2d+2)r)(K_{Z}+B_{Z}+(2d+1)l\M_{Z}))\\
				\leq& (\frac{1+(2d+2)r}{r})^dC_1.
			\end{aligned}
		\end{equation}
		Then we have 
		\begin{equation}
			\label{equation 1}
			\begin{aligned}
				&\mathrm{deg}(K_C+B_Z|_C+(2d+1)l\M_Z|_C)\\
				\leq&(g^*A)^{d-1}.(K_{Z}+B_{Z}+(2d+1)l\M_{Z}+(d-1)g^*A)\\
				\leq &(g^*A)^{d-1}.(K_{Z}+B_{Z}+(2d+1)l\M_{Z}+(2d+1)g^*A)\\
				\leq & (\frac{1+(2d+2)r}{r})^dC_1.
			\end{aligned}
		\end{equation}

		By the construction of $\M_{Z}$, $Z\setminus \Supp(\M_{Z})$ maps into $\Ch^*$, so $C\setminus \Supp(\M_{Z}|_C)$ maps into $\Ch^*$. Suppose $C^o:=C\setminus \Supp(\M_{Z}|_C)$ is a $(g,m)$-curve, then $m\leq \mathrm{deg}_C(l\M_Z|_C)$, and $2g-2+(2d+1)m\leq \mathrm{deg}(K_C+B_Z|_C+(2d+1)l\M_Z|_C)$ is bounded. Because $\Ch^*$ is weakly bounded with respect to $H$, and $C^o$ is a $(g,m)$-curve with $2g+(2d+1)m$ is bounded, then $(g^*A)^{d-1}.\phi^*H=C.\phi^*H=\mathrm{deg}_C(\phi^*H|_C)$ is bounded, by the projection formula, $A^{d-1}.g_*\phi^*H$ is bounded. 
		
		Thirdly, we prove that $A^{d-1}. g'_*F_c$ is bounded from above, which is equal to prove that $A^{d-1}. g_*F$ is bounded. Because $K_Z+B_Z+(2d+1)l\M_Z\sim_{\mathbb{Q}} \frac{1}{r}(g^*A+F)$, then 
		$$A^{d-1}. g_*F=(g^*A)^{d-1} .F\leq (g^*A)^{d-1} .r(K_Z+B_Z+(2d+1)l\M_Z)$$
		which is also bounded by (\ref{equation 1}). 
		
		Finally, because $(g^*A)^{d-1}.F $ and $(g^*A)^{d-1}.(\phi^*H)$ are both bounded and $\mathrm{coeff}(F)\geq \frac{1}{\delta}$, by the boundedness of the Chow variety, $(W,\Supp(g_*(g^*A+F +(2d+1)\phi^*H)))$ is log bounded.
	\end{proof}
	\noindent\textbf{Acknowledgement}.
	I would like to thank my advisor, Professor Christopher Hacon, for many useful suggestions, discussions, and his generosity. I would also like to thank Stefano Filipazzi, Zhan Li, Yupeng Wang, Jingjun Han, and Jihao Liu for many helpful conversations.

	\nocite{*}

\end{document}